\newcommand{\R}{\mathbb{R}}
\newtheorem{theorem}{Theorem}[section]
\newtheorem{proposition}[theorem]{Proposition}
\newtheorem{corollary}[theorem]{Corollary}
\newtheorem{definition}[theorem]{Definition}
\newtheorem{remark}[theorem]{Remark}
\DeclareMathOperator*{\argmin}{arg\,min}
\DeclareMathOperator*{\argmax}{arg\,max}
\let\oldFootnote\footnote
\newcommand\nextToken\relax
\renewcommand\footnote[1]{%
    \oldFootnote{#1}\futurelet\nextToken\isFootnote}
\newcommand\isFootnote{%
    \ifx\footnote\nextToken\textsuperscript{,}\fi}
\numberwithin{equation}{section}
\begin{document}

\thispagestyle{empty}

\begin{center}
{\Large \bf 
Infimal convolution of data discrepancies for mixed noise removal} \\
\end{center}

\begin{center}
     \footnotesize{            {\sc Luca Calatroni}\\
 Centre de Math\'{e}matiques Appliqu\'{e}es (CMAP)\\
 \'{E}cole Polytechnique CNRS, \\
 Route de Saclay, 91128 Palaiseau Cedex, France\\
 (luca.calatroni@polytechnique.edu) \\[0.4cm]
 {\sc Juan Carlos De Los Reyes}\\
 Research Center on Mathematical Modelling (MODEMAT), Escuela Polit\'ecnica Nacional\\
Ladron De Guevara E11-253, 2507144, Quito, Ecuador \\
(juan.delosreyes@epn.edu.ec) \\[0.4cm]
 {\sc Carola-Bibiane Sch\"{o}nlieb} \\
 Department of Applied Mathematics and Theoretical Physics (DAMTP)\\
 University of Cambridge, Wilberforce Road, CB3 0WA, Cambridge, UK \\
 (cbs31@cam.ac.uk)}
       \end{center}

\begin{abstract}
We consider the problem of image denoising in the presence of noise whose statistical properties are a combination of two different distributions. We focus on noise distributions that are frequently considered in applications, in particular mixtures of salt \& pepper and Gaussian noise, and Gaussian and Poisson noise. We derive a variational image denoising model that features a total variation regularisation term and a data discrepancy that features the mixed noise as an infimal convolution of discrepancy terms of the single-noise distributions. We give a statistical derivation of this model by joint Maximum A-Posteriori (MAP) estimation, and discuss in particular its interpretation as the MAP of a so-called infinity convolution of two noise distributions. Moreover, classical single-noise models are recovered asymptotically as the weighting parameters go to infinity. The numerical solution of the model is computed using second order Newton-type methods. Numerical results show the decomposition of the noise into its constituting components. The paper is furnished with several numerical experiments and comparisons with other existing methods dealing with the mixed noise case are shown.
\end{abstract}

\section{Introduction}  \label{sec:intro}

Let $\Omega\subset\mathbb R^2$ open and bounded with Lipschitz boundary and $f:\Omega\rightarrow \mathbb R$ be a given noisy image. The image denoising problem can be formulated as the task of retrieving a denoised image $u:\Omega\rightarrow \mathbb R$ from $f$. In its general expression, assuming no blurring effect on the observed image, the denoising inverse problem assumes the following form:
\begin{equation}  \label{invprob}
\text{find }\qquad u\qquad\text{s.t.}\qquad f=\mathcal{T}(u),
\end{equation}
where $\mathcal{T}$ stands for the degradation process generating random noise that follows some statistical distribution. The problem \eqref{invprob} is often regularised and reformulated as the following minimisation problem  
\begin{equation}   \label{tik:regularis}
\text{find }\quad u\quad\text{ s.t. }\quad u\in\text{argmin}_{v\in V} \Bigl\{ J(v):= R(v) + \lambda~\Phi(v,f) \Bigr\}, 
\end{equation}
where $f$ and $u$ are elements in suitable function spaces, and the desired reconstructed image is computed as a minimiser $u$ of the energy functional $J$ in the Banach space $V$. The functional $J$ is the sum of two different terms: the regularising energy $R$ which drives the reconstruction process encoding \emph{a-priori} information about the desired image $u$ and the data fidelity function $\Phi$ modelling the relation between the data $f$ and the reconstructed image $u$. The positive weighting parameter $\lambda$ balances the action of the regularisation against the trust in the data. In this paper we consider
\begin{equation}\label{eq:TV}
R(u) = |Du|(\Omega)  = \sup_{\xi\in C_c^\infty(\Omega,\mathbb R^2); ~ \|\xi\|_\infty\leq 1} \int u~\mathrm{div}(\xi)~ dx
\end{equation}
the Total Variation (TV) as image regulariser, which is a popular choice since the seminal works of Rudin, Osher and Fatemi \cite{rudinosherfatemi}, Chambolle and Lions \cite{chambollelions1997} and Vese \cite{vese2001study} due to its edge-preserving and smoothing properties. Under this choice, the minimisation problem \eqref{tik:regularis} is formulated in a subspace of $BV(\Omega)$, the space of functions of bounded variation, \cite{AmbrosioBV}.

Having fixed the regularisation term, depending on the imaging task and the specific application considered, several choices for the fidelity term $\Phi$ and the balancing parameter $\lambda$ can be made. The correct mathematical modelling of the data fidelity term $\Phi$ in \eqref{tik:regularis} is crucial for the design of an image reconstruction model fitting appropriately the given data. Its choice is typically driven by physical and statistical considerations on the noise corrupting the data $f$ in \eqref{invprob}, cf. \cite{idier2013bayesian,stuartInversebook,bovikBook2000}.  In this spirit, the general model \eqref{tik:regularis} is derived as the MAP estimate of the likelihood distribution. In the simplest scenario, the noise is assumed to be an additive random component $w$, Gaussian-distributed with zero mean and variance $\sigma^2$ determining the noise intensity. In this case \eqref{invprob} can be simply written as
$$
f=u+w.
$$
Gaussian noise is often used to model the noise statistics in many applications such as in medical imaging, as a simple approximation of more complicated noise models.  Other additive noise distributions, such as the more heavy-tailed  Laplace distribution can alternatively be considered. Another possibility --  which is appropriate for modelling transmission errors affecting only a percentage of the pixels in the image --  is to consider a type of noise where the intensity value of only a fraction of pixels in the image is switched to either the maximum/minimum value of its dynamic range or to a random value within it with positive probability. This type of noise is called ``salt \& pepper'' noise or ``impulse" noise, respectively. In some other cases, a different, signal-dependent property is assumed to conform to the actual physical application considered. For instance, a Poisson distribution of the noise is used for astronomical and microscopy imaging applications. 

When Gaussian noise is assumed, an $L^2$-type data fidelity
\begin{equation} \label{gaussian:fidelity}
\Phi(u,f)=\int_\Omega (u-f)^2~dx,
\end{equation}
can be derived for $f\in L^2(\Omega)$ as the MAP estimate of the Gaussian likelihood function \cite{bovikBook2000,rudinosherfatemi,chambollelions1997}. Similarly, in the case of additive Laplace noise, the statistically consistent data fidelity term for $f\in L^1(\Omega)$ reads
\begin{equation}  \label{impulse:fidelity}
\Phi(u,f)=\int_\Omega |u-f|~dx,
\end{equation}
see, e.g. \cite{laplacenoiseMarks,benningFidelities}. Furthermore, the same data fidelity is considered in \cite{nikolovaoutremov,duvaltvL1} to model the sparse structure of the salt \& pepper and impulse noise distributions. Variational models where a Poisson noise distribution is assumed are often approximated by weighted-Gaussian distributions through variance-stabilising techniques \cite{starckmurtagh1994, vaggelisPET}. In \cite{alexTVpoisson,Saw11} a statistically-consistent analytical modelling has been derived: for $f\in L^\infty(\Omega)$ the resulting data fidelity term is a Kullback-Leibler-type functional $\Phi$ of the form
\begin{equation}  \label{poisson:fidelity}
\Phi(u,f)=\int_\Omega \left( u - f\log u \right)~dx.
\end{equation}

As a result of different image acquisition and transmission factors, very often in applications the observed image is corrupted by a mixture of noise statistics. For instance, mixed noise distributions can be observed when faults in the acquisition of the image are combined with transmission errors to the receiving sensors. In this case, the modelling of \eqref{tik:regularis} shall encode a combination of salt \& pepper and Gaussian noise in the choice of $\Phi$. In other applications, specific tools (such as fluorescence and/or high-energy beams) are used before the signal is actually acquired. This process is typical, for instance, in microscopy and astronomy and may result in a combination of a Poisson noise with an additive Gaussian noise component \cite{sarder2006,snyder93}. 

From a modelling point of view, the presence of multiple noise distributions has been translated in the relevant literature in the combination of the data fidelity terms \eqref{gaussian:fidelity}, \eqref{impulse:fidelity} and \eqref{poisson:fidelity} in different ways. In \cite{HintermuellerLanger2013}, for instance, a combined $L^1+L^2$ data fidelity with TV regularisation model is considered for joint impulsive and Gaussian noise removal. There, the image is spatially decomposed in two disjoint domains in which the image is separetely corrupted by pure salt \& pepper and Gaussian noise, respectively. A two-phase approach is considered in \cite{impulsegauss2008} where two sequential steps with $L^1$ and $L^2$ data fidelity are performed to remove the impulsive and the Gaussian component of the mixed noise, respectively.  Mixtures of Gaussian and Poisson noise have also been considered. In \cite{poissongauss2013,Jezierska2012}, for instance, the exact log-likelihood estimator of the mixed noise model is derived and its numerical solution is computed via a primal-dual splitting. A similar model has been considered in \cite{Benvenuto2008} where a scaled gradient semi-convergent algorithm is used to solve the combined model. In \cite{Foigausspoiss} the discrete-continuous nature of the model (due to the different support of the Poisson-Gaussian distribution on the set of natural and real numbers, respectively) is approximated by an additive model, using homomorphic variance-stabilising transformations and weighted-$L^2$ approximations. In \cite{Luisier2010a} a non-Bayesian framework to differentiate Poisson intensities from the Gaussian ones in the Haar wavelet domain is considered. In \cite{lanza2013} a Gaussian-Poisson model similar to the one we consider in this paper is derived in a discrete setting. A more recent approach featuring a linear combination of different data fidelities of the type \eqref{gaussian:fidelity}, \eqref{impulse:fidelity} and \eqref{poisson:fidelity} has been considered in \cite{langerl1l2} for a combination of Gaussian and impulse noise and in \cite{noiselearning,lucasampling,bilevellearning} in the context of bilevel learning approaches for the design of optimal denoising models.

In this work, we present an alternative variational model for denoising of images corrupted by mixed noise distributions that is based on an infimal convolution of the data fidelity terms \eqref{gaussian:fidelity}, \eqref{impulse:fidelity} and \eqref{poisson:fidelity}. For the case of Gaussian-Poisson noise a similar model is discussed in \cite{lanza2013} in a finite-dimensional setting. Our model is derived from statistical assumptions on the data and can be studied rigorously in function spaces using standard tools in calculus of variations and functional analysis. The simple infimal convolution nature of the model derived makes its numerical solution amenable for standard first or second order optimisation methods. In this paper we use a semi-smooth Newton (SSN) method for the numerical realisation of our model. By using the classical operation of infimal convolution, our variational model combines different data fidelities $\Phi$, each associated to the corresponding noise component in the data, allowing for splitting the noise into its constituting elements. In what follows, we fix the regularisation term in \eqref{tik:regularis} to be the TV energy. This is of course just a toy example and extensions to higher-order regularisations such as TV-TV$^2$ \cite{kostasTV2} or TGV \cite{TGV} can be considered as well. We refer to our model as TV-IC to highlight the infimal convolution combination of data fidelities. This should not be confused with the ICTV model \cite{chambollelions1997,HollerKunischIC2014} where the same operation is used to combine TV regularisation with second-order regularisation terms. 

\subsection{The reference models} \label{sec:reference:models} 

In the following, we consider two exemplar problems which extend \eqref{invprob} to the case of multiple noise distributions. We consider the following problem encoding noise mixtures
\begin{equation}   \label{invprob1}
\text{find}\quad u \quad \text{such that}\quad f=\mathcal{T}(u) + w,
\end{equation}
where $\mathcal{T}$ models a general noising process possibly depending on $u$ in a non-linear way, while $w$ is an additive, Gaussian-distributed, noise component independent of $u$. We will focus in the following on two particular cases of \eqref{invprob1}.

\medskip

\paragraph{Salt \& pepper and Gaussian noise} For this case, we suppose that the observed noisy image $f$ is corrupted by a mixture of salt \& pepper and  Gaussian noise, i.e. we consider the following instance of the noising model \eqref{invprob1},
\begin{equation} \label{laplacegaussianoise} 
f=(1-s)u+sc+w
\end{equation}
where, following \cite[Section 1.2.2]{chanshenbook},  the salt \& pepper component is modelled by considering two independent random fields $s$ and $c$ defined, for every pixel $x\in\Omega$ defined as
$$
c(x)=\begin{cases}
0,\quad \text{with probability }p=1/2 \\
1,\quad \text{with probability }q=1/2
\end{cases},\quad 
s(x)=\begin{cases}
0,\quad \text{with probability }p \\
1,\quad \text{with probability }1-p
\end{cases} .
$$
Following \cite[Section 2.2]{rodriguezReview2013} and \cite{laplacenoiseMarks}, we will approximate the nonlinear model \eqref{laplacegaussianoise} by the following additive model:
\begin{equation} \label{additivenoise}
f=u+v+w,
\end{equation}
where $v$ is now the realisation of a Laplace distributed random variable, independent of $u$. This approximation will be made more clear in Section \ref{sec:statconst}.

\medskip

\paragraph{Poisson and Gaussian noise} In this case, we consider the problem of a mixture of noise distributions where a \emph{signal-dependent} component is combined with an additive Gaussian noise component. In particular, we will focus on a Poisson distributed component $z$ having $u$ as Poisson parameter. In other words, we will consider the noising model \eqref{invprob1} with
\begin{equation}  \label{gausspoissnoise}
 f=z+w,\qquad \text{where }z\sim Pois(u).
\end{equation}

\medskip

\paragraph{Organisation of the paper} 
In Section \ref{sec:model} we present the infimal-convolution data fidelity terms considered for the reference models above and show that they are well-defined. Motivations for the TV-IC denoising model are given in Section \ref{sec:statconst} where some considerations based on the use of Bayes' formula are given to support our modelling. In particular, we make precise how the mixed data fidelity terms can be derived in two different ways, using a joint MAP estimation strategy as in \cite{lanza2013}, and by deriving a joint posterior distribution using and $L^\infty$ variant of the classical convolution operation of two probability distributions.  In Section \ref{subsec:wellposednessBV} well-posedness results of the TV-IC model by means of classical tools of calculus of variations and functional analysis are given. In Section \ref{sec:asympt} we show how classical single noise models can be recovered asymptotically from our model by letting the fidelity parameters go to infinity. In Section \ref{sec:numres} we confirm the validity of our model with several different numerical experiments using a semi-smooth Newton method for its efficient numerical solution. Finally, in Section \ref{sec:learning_mot} we report some preliminary results in the spirit of recent advances in noise model learning via bilevel optimisation \'a la \cite{bilevellearning}.

\section{The variational model} \label{sec:model}

We consider a open and bounded image domain $\Omega\subset\R^2$ with Lipschitz boundary and denote by $f$ the given, noisy image. Further assumptions on the function spaces where $f$ lies  will be specified in the following.  

Let $R(u):=|Du|(\Omega)$ the total variation semi-norm defined in \eqref{eq:TV} and let $BV(\Omega)$ denote the space of functions of bounded variation, \cite{AmbrosioBV} . For positive weights $\lambda_1,~ \lambda_2>0$ and admissible sets of functions $\mathcal{A}$ and $\mathcal{B}$, the general proposed TV-IC denoising model for mixed noise distributions reads: 

\begin{subequations}  \label{generalinfconvproblem}
\begin{equation}  \label{infconvproblemu}
\min_{u\in BV(\Omega)\cap\mathcal{A}} \left\{|Du|(\Omega) + \Phi^{\lambda_1,\lambda_2}(u,f) \right\}   \tag{TV-ICa}
\end{equation} 
where the data fidelity $\Phi^{\lambda_1,\lambda_2}(u,f)$ has the following infimal convolution-type structure:
\begin{equation}  \label{infconvproblem2}
\Phi^{\lambda_1,\lambda_2}(u,f):=\inf_{v\in L^2(\Omega)\cap\mathcal{B}} \left\{ \mathscr{F}^{\lambda_1,\lambda_2}(f,u,v):=\lambda_1~ \Phi_1(v) + \lambda_2 ~\Phi_2(u,f-v) \right\},   \tag{TV-ICb}
\end{equation}
\end{subequations}
for two data fidelity functions $\Phi_1$ and $\Phi_2$ defined over $L^2(\Omega)\cap\mathcal{B}$. The size of the weighting parameters $\lambda_1$ and $\lambda_2$ in \eqref{infconvproblem2} balances the trust in the data against the smoothing effect of the regularisation as well as the fitting with respect to the intensity of each single noise distribution in $f$. 

We now consider the two particular noise combinations introduced in Section \ref{sec:reference:models} and demonstrate how these fit into the general model above. 

\subsection{Salt \& pepper-Gaussian fidelity}   \label{subsec:gaussimp}
For the case of mixed salt \& pepper and Gaussian noise \eqref{additivenoise}, we specify the assumptions for the model in \eqref{infconvproblemu}-\eqref{infconvproblem2} as follows. We consider the noisy image $f\in L^2(\Omega)$, $\Phi_1(v)=\| v \|_{L^1(\Omega)}$ for the impulsive noise component and $\Phi_2(u,f-v)=\frac{1}{2}\| f-v-u\|_{L^2(\Omega)}^2$ for the Gaussian noise component. The admissible sets $\mathcal A, \mathcal B$ are simply $\mathcal{A}=\mathcal{B}=L^2(\Omega)$. In this case, the infimal convolution data fidelity in \eqref{infconvproblem2} reads
\begin{equation}   \label{impgaussinfconvfidelity}
\Phi^{\lambda_1,\lambda_2}(u,f)= \inf_{v\in L^2(\Omega)} \left\{\mathscr{F}^{\lambda_1,\lambda_2}(f,u,v)= \lambda_1 \| v\|_{L^1(\Omega)} + \frac{\lambda_2}{2}\| f-u-v\|_{L^2(\Omega)}^2 \right\}.
\end{equation}
Since the set $\Omega$ is bounded, $L^2(\Omega)\subset L^1(\Omega)$ and both terms in \eqref{impgaussinfconvfidelity} are well-defined.

\begin{remark} \label{remark:prox_map} 
Note that \eqref{impgaussinfconvfidelity} can be rewritten as the \emph{Moreau envelope} or as a weighted \emph{proximal map} of the $L^1$-norm in terms of the ratio between the parameters $\lambda_1$ and $\lambda_2$ (see \cite[Section 12.4]{bauschkecombettes}). Namely,  we have:
$$
\Phi^{\lambda_1,\lambda_2}(u,f)= \mbox{prox}_{\frac{\lambda_1}{\lambda_2}\Phi_1}(f-u) = \frac{\lambda_1}{\lambda_2}~M^{\frac{\lambda_1}{\lambda_2}}_{\Phi_1},
$$
where $\mbox{prox}_\Psi$ denotes the proximal map of a generic function $\Psi$ while $M^{\gamma}_\Psi$ indicates the \emph{Moreau envelope} of $\Psi$ with parameter $\gamma$. Therefore, $\Phi^{\lambda_1,\lambda_2}$ inherits in this case several good properties of proximal maps such the firmly non-expansiveness (\cite[Proposition 12.27]{bauschkecombettes}) which will be useful in the following analysis.
\end{remark}

The following proposition asserts that the minimisation problem \eqref{impgaussinfconvfidelity} is well-posed.

\begin{proposition}  \label{wellposednimpgauss}
Let  $f\in L^2(\Omega)$, $u\in \text{BV}(\Omega)\subset L^2(\Omega)$ and $\lambda_1,\lambda_2>0$. Then the minimum in the minimisation problem \eqref{impgaussinfconvfidelity} is uniquely attained.
\end{proposition}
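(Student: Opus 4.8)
The plan is to solve the minimisation problem \eqref{impgaussinfconvfidelity} by the direct method of the calculus of variations, and to extract uniqueness from the strict convexity of the objective. Write $\mathscr{G}(v):=\mathscr{F}^{\lambda_1,\lambda_2}(f,u,v)=\lambda_1\|v\|_{L^1(\Omega)}+\tfrac{\lambda_2}{2}\|f-u-v\|_{L^2(\Omega)}^2$ for the functional minimised over $v\in L^2(\Omega)$. Since $\Omega$ is bounded we have the continuous embedding $L^2(\Omega)\hookrightarrow L^1(\Omega)$, so for every $v\in L^2(\Omega)$, and using $f,u\in L^2(\Omega)$, both terms are finite; as $\mathscr{G}(0)<\infty$ the functional is proper, and being a sum of two nonnegative terms it is bounded below by $0$. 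Set $m:=\inf_{v\in L^2(\Omega)}\mathscr{G}(v)\geq 0$ and fix a minimising sequence $(v_n)\subset L^2(\Omega)$.

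First I would establish coercivity in $L^2(\Omega)$. The $L^1$ term is nonnegative, so along the minimising sequence the quadratic term $\tfrac{\lambda_2}{2}\|f-u-v_n\|_{L^2(\Omega)}^2$ stays bounded; writing $v_n=(f-u)-(f-u-v_n)$ and using the triangle inequality $\|v_n\|_{L^2(\Omega)}\leq \|f-u\|_{L^2(\Omega)}+\|f-u-v_n\|_{L^2(\Omega)}$ shows that $(v_n)$ is bounded in $L^2(\Omega)$. Since $L^2(\Omega)$ is reflexive, up to a subsequence $v_n\rightharpoonup v^\ast$ weakly in $L^2(\Omega)$. It then remains to pass to the limit. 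Both terms of $\mathscr{G}$ are convex and strongly lower semicontinuous on $L^2(\Omega)$: the quadratic term because the norm is continuous, and $v\mapsto\|v\|_{L^1(\Omega)}$ because the embedding $L^2(\Omega)\hookrightarrow L^1(\Omega)$ is continuous, so $\|\cdot\|_{L^1(\Omega)}$ is strongly continuous on $L^2(\Omega)$. A convex, strongly lower semicontinuous functional on a Banach space is weakly sequentially lower semicontinuous (Mazur's lemma), hence $\mathscr{G}(v^\ast)\leq \liminf_{n}\mathscr{G}(v_n)=m$, so $v^\ast$ attains the infimum.

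For uniqueness I would argue by strict convexity. The affine map $v\mapsto f-u-v$ is a bijection of $L^2(\Omega)$ and $t\mapsto\tfrac{\lambda_2}{2}\|t\|_{L^2(\Omega)}^2$ is strictly convex on the Hilbert space $L^2(\Omega)$, so the Gaussian term is strictly convex in $v$; adding the convex $L^1$ term preserves strict convexity. A strictly convex functional admits at most one minimiser, which together with the existence just shown yields that the minimum is uniquely attained.

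I expect the only genuinely delicate point to be the weak lower semicontinuity of the $L^1$ term under weak $L^2$ convergence, since strong $L^1$ convergence is not available from weak $L^2$ convergence; this is however supplied cleanly by the convexity-plus-strong-lsc principle invoked above. Alternatively, and more quickly, one may appeal to Remark \ref{remark:prox_map}: since $\Phi_1=\|\cdot\|_{L^1(\Omega)}$ is proper, convex and lower semicontinuous on the Hilbert space $L^2(\Omega)$, its proximal map is single-valued and everywhere defined, so the minimiser of \eqref{impgaussinfconvfidelity} is exactly $\mathrm{prox}_{(\lambda_1/\lambda_2)\Phi_1}(f-u)$, reducing the claim to the standard well-posedness of the Moreau--Yosida regularisation.
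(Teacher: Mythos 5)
Your proof is correct and follows essentially the same route as the paper's: the appendix proof invokes exactly the proximal-map interpretation of Remark \ref{remark:prox_map} together with ``standard lower semicontinuity and strict convexity properties of $\mathscr{F}^{\lambda_1,\lambda_2}(f,u,\cdot)$ in $L^2(\Omega)$'', which is precisely the direct method plus strict convexity argument you spell out in detail. Your treatment of the only delicate point --- weak $L^2$ lower semicontinuity of the $L^1$ term via convexity plus strong continuity from the embedding $L^2(\Omega)\hookrightarrow L^1(\Omega)$ on the bounded domain --- is a correct and welcome elaboration of what the paper leaves implicit.
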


\begin{proof}
The proof of this proposition is based on the use of standard tools of calculus of variations. We report it in Appendix \ref{append:proofs}.
\end{proof}

\subsection{Gaussian-Poisson fidelity}   \label{subsec:gausspoiss}

For the case of mixed Gaussian and Poisson noise  \eqref{gausspoissnoise} some additional assumptions need to be specified. The given image $f$ is assumed to be bounded, that is $f\in L^\infty(\Omega)$ and the data fidelities $\Phi_1$ and $\Phi_2$, encoding the Gaussian and the Poisson component of the noise, respectively, are then chosen as $\Phi_1(v)=\frac{1}{2}\| v \|_{L^2(\Omega)}^2$ and $\Phi_2$ as the Kullback-Leibler (KL) divergence between $u$ and the ``residual" $f-v$, that is
\begin{equation} \label{fullKL}
\Phi_2(u,f-v)=D_{KL}(f-v,u)=\int_\Omega \left(u - (f-v) +(f-v)\log \left(\frac{f-v}{u}\right)\right)~d\mu.
\end{equation} 
We refer the reader to Appendix \ref{append:KLfunct} where more properties of the KL functional are given.
The variational fidelity $\Phi^{\lambda_1,\lambda_2}$ in \eqref{infconvproblem2} then reads
\begin{equation}   \label{gausspoissinfconvfidelity}
\Phi^{\lambda_1,\lambda_2}(u,f)= \inf_{v\in L^2(\Omega)\cap\mathcal{B}} \left\{ \mathscr{F}^{\lambda_1,\lambda_2}(f,u,v)=\frac{\lambda_1}{2} \| v\|^2_{L^2(\Omega)} + \lambda_2~D_{KL}(f-v,u) \right\}
\end{equation}
with the following choice of the admissible sets:
 \begin{equation}    \label{admissiblesetsgausspoiss}
 \mathcal{A}=\left\{ u\in L^1(\Omega),~ \log u \in L^1(\Omega)\right\}\quad\text{ and }\quad\mathcal{B}=\left\{ v\in L^2(\Omega): v\leq f\ \text{ a. e.} \right\}.
 \end{equation}
We note that for $u\in\mathcal{A}$ we have $u\geq0$ almost everywhere. Together with the definition of $\mathcal{B}$, this ensures that the $D_{KL}$ functional is well defined (using the standard convention $0\log 0 = 0$).

\begin{remark}
Our choice for the Poisson data fidelity \eqref{fullKL} differs from previous works on Poisson noise modelling such as \cite{Saw11,noiselearning} where a reduced KL-type data fidelity
\begin{equation*}   
\tilde{\Phi}_2(u,f)=\int_\Omega \left(u - f~\log u\right)~d\mu
\end{equation*} 
is preferred. This choice is motivated through Bayesian derivation and MAP estimation (compare  \cite{Saw11,vaggelisPET,benningFidelities}), where the terms that do not depend on the de-noised image $u$ are neglected since they are not part of the optimisation argument. In our combined modelling, however, those quantities have to be taken into account to incorporate the additional variable $v$ encoding the Gaussian noise component.
\end{remark}

Similarly as in Proposition \ref{wellposednimpgauss}, we guarantee that the minimisation problem \eqref{gausspoissinfconvfidelity} is well posed in the following proposition. 

\begin{proposition}  \label{wellposednessgausspoiss}
Let $f\in L^\infty(\Omega)$, $u\in BV(\Omega)\cap\mathcal{A}$ and $\lambda_1,\lambda_2>0$. Let $\mathcal{A}$ and $\mathcal{B}$ be as in  \eqref{admissiblesetsgausspoiss}. Then, the minimum in the minimisation problem \eqref{gausspoissinfconvfidelity} is uniquely attained.
\end{proposition}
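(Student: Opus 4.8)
The plan is to establish existence and uniqueness via the direct method of the calculus of variations, exactly as for Proposition~\ref{wellposednimpgauss}, but with the added care that the functional now involves the Kullback--Leibler divergence and the admissible sets $\mathcal{A},\mathcal{B}$ from \eqref{admissiblesetsgausspoiss}. First I would fix $u\in BV(\Omega)\cap\mathcal{A}$ (so $u\geq 0$ and $\log u\in L^1(\Omega)$) and regard $\mathscr{F}^{\lambda_1,\lambda_2}(f,u,\cdot)$ as a functional of the single variable $v$ over the admissible set $\mathcal{B}=\{v\in L^2(\Omega):v\leq f\text{ a.e.}\}$. The goal is to check the three ingredients of the direct method: that the infimum is finite (feasibility/properness), that minimising sequences are bounded, and that the functional is weakly lower semicontinuous while $\mathcal{B}$ is weakly closed.

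For \textbf{feasibility and coercivity}, I would note that $v=0\in\mathcal{B}$ gives $\mathscr{F}=\lambda_2\,D_{KL}(f,u)$, which is finite since $f\in L^\infty(\Omega)$ and $u\in\mathcal{A}$; this shows the infimum is finite. Coercivity comes from the quadratic term $\tfrac{\lambda_1}{2}\|v\|_{L^2}^2$ together with nonnegativity of $D_{KL}$ (see Appendix~\ref{append:KLfunct}): along any minimising sequence $\{v_n\}$ the values $\mathscr{F}^{\lambda_1,\lambda_2}(f,u,v_n)$ are bounded, and since $D_{KL}\geq 0$ we get $\tfrac{\lambda_1}{2}\|v_n\|_{L^2}^2\leq C$, so $\{v_n\}$ is bounded in $L^2(\Omega)$ and admits a weakly convergent subsequence $v_n\rightharpoonup \bar v$. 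The constraint set $\mathcal{B}$ is convex and strongly closed, hence weakly closed, so $\bar v\in\mathcal{B}$.

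For \textbf{lower semicontinuity}, the quadratic term $\tfrac{\lambda_1}{2}\|\cdot\|_{L^2}^2$ is convex and continuous, hence weakly lower semicontinuous. The main obstacle is the $D_{KL}$ term: I would argue that $v\mapsto D_{KL}(f-v,u)$ is convex in $v$ (the integrand is built from the convex function $t\mapsto t\log t$ composed affinely with $v$, plus affine terms), and lower semicontinuous in the strong $L^1$ topology; convexity then upgrades this to weak lower semicontinuity in $L^2(\Omega)$. This is where the properties collected in Appendix~\ref{append:KLfunct} do the real work, and I would invoke them rather than re-proving convexity of the entropy functional by hand. Combining weak lower semicontinuity of both terms gives $\mathscr{F}^{\lambda_1,\lambda_2}(f,u,\bar v)\leq \liminf_n \mathscr{F}^{\lambda_1,\lambda_2}(f,u,v_n)$, so $\bar v$ is a minimiser.

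Finally, \textbf{uniqueness} follows from \emph{strict} convexity: the map $v\mapsto \tfrac{\lambda_1}{2}\|v\|_{L^2}^2$ is strictly convex on $L^2(\Omega)$, and since the $D_{KL}$ term is convex, the sum $\mathscr{F}^{\lambda_1,\lambda_2}(f,u,\cdot)$ is strictly convex; a strictly convex functional admits at most one minimiser on the convex set $\mathcal{B}$. I expect the genuinely delicate point to be verifying that the limit $\bar v$ keeps the $D_{KL}$ term finite and that no mass is lost at the boundary where $f-v\to 0$ (the regime governed by the convention $0\log 0=0$); the nonnegativity and lower semicontinuity of the entropy functional are precisely what rule out such pathologies, so I would lean on Appendix~\ref{append:KLfunct} at exactly that step.
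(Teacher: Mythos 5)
Your proposal is correct and follows essentially the same route as the paper's proof in Appendix \ref{append:proofs}: the direct method with coercivity from the quadratic term plus nonnegativity of $D_{KL}$, weak closedness of $\mathcal{B}$ by convexity, weak lower semicontinuity of the Kullback--Leibler term via the properties collected in Appendix \ref{append:KLfunct}, and uniqueness from strict convexity of the $L^2$ term. The only (welcome) additions are your explicit properness check and the remark that convexity upgrades strong to weak lower semicontinuity, where the paper instead cites the weak-$L^1$ lower semicontinuity of $D_{KL}(\cdot,u)$ directly.
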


\begin{proof}
We report the proof in Appendix \ref{append:proofs} It is based on standard tools of calculus of variations and on the properties of Kullback-Leibler divergence recalled in Appendix \ref{append:KLfunct}. 
\end{proof}

\section{Statistical derivation and interpretation}  \label{sec:statconst}

In this section we want to give a statistical motivation for the choice of the IC data fidelity term introduced in the previous section. We do this by switching from the continuous setting with $f$ in an infinite dimensional function space, to the discrete setting, where $f=(f_i)_{i=1}^M$ is a vector in $\mathbb R^M$ and the noise distribution of each $f_i$ is independent of the others. Correspondingly, we consider an appropriate discretisation of the continuous TV-IC variational model\eqref{infconvproblemu}-\eqref{infconvproblem2}. In this setting, we will show, how the IC data fidelity can be derived in two ways: in terms of a joint MAP estimate for $u$ and $v$ in Section \ref{sec:mapderivation}, and by considering a modified likelihood for the mixed noise distribution  -- which consists of an infinity convolution of the two noise distributions -- and computing its MAP estimate with respect to $u$ in Section \ref{sec:infinityconv}.

\subsection{Joint maximum-a-posterior estimation}\label{sec:mapderivation}

A standard approach used to derive variational regularisation methods for inverse problems which reflect the statistical assumptions on the data is the Bayesian approach \cite{chanshenbook,stuartInversebook}. In this framework, the problem is formulated in terms of the maximisation of the posterior probability density $P(u|f)$, i.e. the probability of observing the desired de-noised image $u$ given the noisy image $f$. This approach is commonly known as Maximum A Posteriori (MAP)  estimation and relies on the simple application of the Bayes' rule
\begin{equation}   \label{bayes}
P(u|f)=\frac{P(f|u)P(u)}{P(f)},
\end{equation}
by which maximising $P(u|f)$ for $u$ translates in maximising the ratio on the right hand side of \eqref{bayes}. Classically, the probability density $P(u)$ is called prior probability density since it encodes statistical a priori information on $u$. Frequently, a Gibbs model of the form  
\begin{equation} \label{gibbs_model}
P(u)=e^{-\alpha R(u)},~ \alpha>0,
\end{equation}
 where $R(u)$ is a convex regularising energy functional is assumed. In what follows, we take $R$ to be a discretisation of the total variation \eqref{eq:TV}, that is denoting by $k,l$ the pixel-coordinates and reordering within a one-dimensional vector with index $i$, thus considering $u=(u_i)_{i=1}^M=(u_{k,l})\in\mathbb R^M$, we have
 $$
 R(u)= \|\nabla u\|_{2,1} = \sum_i |\nabla u_i| = \sum_{k,l} \sqrt{(u_{k+1,l}-u_{k,l})^2 + (u_{k,l+1}-u_{k,l})^2},
$$ 
where $\nabla$ denotes the forward-difference approximation of the two-dimensional gradient. The conditional probability density $P(f|u)$, also called likelihood function, relates to the statistical assumptions on the noise corrupting the data $f$. Its expression depends on the probability distribution of the noise assumed on the data. The denominator of \eqref{bayes} plays the role of a normalisation constant for $P(u|f)$ to be a probability density.
 
Clearly, maximising \eqref{bayes} in our setting of a Gibbs prior is equivalent to minimising the negative logarithm of $P(u|f)$. Thus, the MAP estimation problem equivalently corresponds to the problem of finding 
\begin{equation*}   
u_{MAP}\in \argmin_u \left\{ -\log P(u|f) \right\} = \argmin_u \left\{ -\log P(f|u) + \alpha R(u) \right\},
\end{equation*}
where the term $\log P(f)$ has been neglected since it does not affect the minimisation over $u$. 

We now want to use MAP estimation to derive a discretisation of the TV-IC variational model \eqref{infconvproblemu}-\eqref{infconvproblem2} for the mixed noise scenario. From a Bayesian point of view, the general model \eqref{invprob1} can be written in terms of random variables $F$ and $U$ (for which the noisy image $f$ and the noise-free image $u$ are realisations, respectively) as follows
\begin{equation}   \label{structureMAP}
F=Z+W\quad Z\sim P^U_Z,\quad U\sim P_U, \quad W\sim P_W:=\mathcal{N}(0,\sigma^2),
\end{equation}
Here, $F$ is the sum of the two random variables $Z$ and $W$. The random variable $Z$ is distributed according to a probability distribution $P^U_Z$ which may in general depend on $U$ (for instance, $U$ could be one of its parameters), whereas $W$ is fixed to be a Gaussian random variable independent of $U$ and $Z$, which combined with $Z$ through an additive modelling. Here, we denote by $P_U$  the Gibbs model \eqref{gibbs_model} on $U$, i.e. the prior distribution on $U$, the quantity of interest, so that $P_U(u)=P(u)$.

For our derivation we consider a joint MAP estimation for a pair of realisations $(u,w)$ of the corresponding random variables $U$ and $W$. Using Bayes' rule and mutual independence between $W$ and $U$, we have:  
\begin{multline}\label{jointMAP_u_n}
(\hat{u},\hat{w}) = \argmax_{(u,w)}~P(u,w|f)=\argmax_{(u,w)}~P(f|u,w)P(u,w)\\ =  \argmax_{(u,w)}~P^U_Z(f-w)P_W(w)P_U(u)
\end{multline}
which holds for general noise distributions $P_Z^U$. However, in the particular case where  $Z=U+V$ with $V$ being a random variable distributed as $V\sim P_V$ and independent of $U$ (in particular, when $U$ is not a parameter of the probability distribution of $Z$ and is combined additively with another random variable $V$), \eqref{structureMAP} becomes
\begin{equation}  \label{MAPsum}
F=U+V+W.
\end{equation}
In this case $P_Z^U(z)=P_V(z-u)$, and \eqref{jointMAP_u_n} reduces to:
\begin{equation}  \label{jointMAP1}
(\hat{u},\hat{w})  = ~\argmax_{(u,w)}~P^U_Z(f-w)P_W(w)P_U(u) =  \argmax_{(u,w)} P_V(f-u-w)P_W(w)P_U(u).
\end{equation}

After these general considerations, let us now carry on with the detailed MAP derivation, first for the additive mixture of Laplace noise noistribution (as an approximation for salt \& pepper distribution) and Gaussian noise distribution \eqref{additivenoise}, and then for the mixed Gaussian-Poisson case \eqref{gausspoissnoise}.

\medskip

\paragraph{The additive case} Let us focus first on the additive model \eqref{MAPsum}  in the case of a mixture of Laplace- and Gaussian-distributed noise. Following \cite{rodriguezReview2013}, we will use the Laplace distribution as an approximation for the one describing salt \& pepper noise. More precisely, such distribution is an example of heavy-tailed distribution whose behaviour can be well approximated by the heavy-tailed Laplace noise distribution \cite{laplacenoiseMarks,laplacenoiseWu}. We start from formulating the problem \eqref{MAPsum} in a finite-dimensional statistical setting. For each image pixel $i=1,\ldots,M$, our problem is 
\begin{equation} \label{finite_dim_additive}
\text{find}\quad u_i\quad\text{s.t.}\quad f_i=u_i+v_i+w_i,
\end{equation}
where $v_i$ and $w_i$ are realisations of two mutually independent random variables distributed according to Laplace and Gaussian distribution, respectively. More precisely, the equation above describes the structure of the realisations $(f_1,\ldots, f_M)$ of the components of the random vector $F:=(F_1,\ldots,F_M)$ in terms of the ones of the random vectors $ U:=(U_1,\ldots,U_M), ~V:=(V_1,\ldots,V_M)$ and $W:=(W_1,\ldots,W_M)$. The quantities $v_i$ and $w_i$ are, for every $i=1,\ldots,M$, independent realisations of identically distributed Laplace and Gaussian random variables $V_i$ and $W_i$, respectively, i.e. $V_i\sim Lapl(0,\tau)=:P_{V_i}$ and $W_i\sim\mathcal{N}(0,\sigma^2)=:P_{W_i}$. 
 
We recall that the Laplace probability density with parameter $\tau>0$ is defined as
\begin{equation} \label{probdensities:imp}
P_{V_i}(s)=\frac{e^{-|s|/\tau}}{2\tau},\quad s\in\R,
\end{equation}
and the Gaussian probability density with zero-mean and variance $\sigma^2$ is defined as
\begin{equation}   \label{probdensities:gaussian}
P_{W_i}(s)=\frac{e^{-|s|^2/2\sigma^2}}{\sqrt{2\pi\sigma^2}},\quad s\in\R.
\end{equation}
Let us now take the negative logarithm in the joint MAP estimate \eqref{jointMAP1} for all the realisations. By independence, we have that:
\begin{align} 
(\hat{u},\hat{w}) & = ~\argmin_{\substack{u=(u_1,\ldots,u_M),\\ w=(w_1,\ldots,w_M)}}~-\log \Bigl(\prod_{i=1}^M P_{V_i}(f_i-u_i-w_i)P_{W_i}(w_i)P_{U_i}(u_i)\Bigr) \notag \\
& = ~ \argmin_{\substack{u=(u_1,\ldots,u_M),\\ w=(w_1,\ldots,w_M)}}- \sum_{i=1}^M ~\log \Bigl( P_{V_i}(f_i-u_i-w_i)P_{W_i}(w_i)P_{U_i}(u_i)\Bigr) \label{loglikel:prop1}\\
& = ~ \argmin_{\substack{u=(u_1,\ldots,u_M),\\ w=(w_1,\ldots,w_M)}} \sum_{i=1}^M \Bigl( \frac{|f_i-u_i-w_i|}{\tau}+\frac{|w_i|^2}{2\sigma^2} +\alpha |\nabla u_i| \Bigr).  \notag
\end{align}
where the constant terms which do not affect the minimisation over $u$ and $w$ have been neglected.

To pass from a discrete to a continuous expression of the model we follow \cite{Saw11}. We interprete the elements in the space $\R^M$ as samples of functions defined on the whole image domain $\Omega$. For convenience, we use in the following the same notation to indicate the corresponding, continuous quantities. Introducing the indicator function 
\[
\chi_{D_i}(x) = 
\begin{cases}
1~,\quad\text{if }x\in D_i, \\
 0~,\quad\text{else},
\end{cases}
\]
where $D_i$ is the region in the image occupied by the $i$-th detector, we have that any discrete data $h_i$ can be interpreted as the mean value of a function $h$ over the region $D_i$ as follows
\[
h_i = \int_{D_i} h(x)~dx = \int_{\Omega} \chi_{D_i}(x)h(x)~dx.
\]
In this way, we can then express \eqref{loglikel:prop1} as the following continuous model 
\begin{equation*}  
\min_{u,w:~\Omega\to\R}~\int_\Omega \left( \frac{|f(x)-u(x)-w(x)|}{\tau}+\frac{|w(x)|^2}{2\sigma^2} + 
\alpha |\nabla u(x)|\right) ~ d\mu(x)
\end{equation*}
where $d\mu(x)=\sum_{i=1}^M \chi_{D_i}dx$ with $dx$ being the usual Lebesgue measure in $\R^2$. We observe that at this level, the function spaces (i.e. the regularity) where the minimisation problem is posed still need to be specified. Defining  $\lambda_1:=1/\alpha\tau$ and $\lambda_2:=1/\alpha\sigma^2$ and replacing $\int_\Omega |\nabla u(x)|~ dx$ by $|Du|(\Omega)$, we derive from \eqref{commuteinfint} the following variational model for noise removal of Laplace and Gaussian noise mixture
\begin{equation*}
\min_{\substack{u\in BV(\Omega)\\ w\in L^2(\Omega)}}~|Du|(\Omega) + \lambda_1 \| f-u-w \|_{L^1(\Omega)} + \frac{\lambda}{2}\| w\|_{L^2(\Omega)}^2,
\end{equation*}
where the function spaces for $u$ and $w$ have been chosen so that all the terms are well defined. By a simple change of variables, we can equivalently write the model above as
\begin{equation}   \label{TV_IC_additive2}
\min_{\substack{u\in BV(\Omega)\\ v\in L^2(\Omega)}}~|Du|(\Omega) + \lambda_1 \|v \|_{L^1(\Omega)} + \frac{\lambda}{2}\|  f-u-v\|_{L^2(\Omega)}^2,
\end{equation}
where the minimisation is taken over the de-noised image $u$ and the salt \& pepper noise component $v$. 

\medskip

\paragraph{The signal-dependent case} Let us now consider \eqref{structureMAP} in the case when the non-Gaussian noise component in the data $Z$ follows a Poisson probability distribution with parameter $U$. In this case, 
the model \eqref{structureMAP} can be formulated in a finite-dimensional setting as
\begin{equation} \label{poisson_gauss_strcture}
f_i = z_i + w_i,\quad\text{with } z_i\sim Poiss(u_i)
\end{equation}
at every pixel $i=1,\ldots,M$. Similarly as before, the equation above describes the structure of the realisations of the random vector $F=(F_1,\ldots,F_M)$ in terms of the sum of two mutually independent random vectors $Z:=(Z_1,\ldots,Z_M)$ and  $W:=(W_1,\ldots,W_M)$, having as components independent and identically distributed Poisson and Gaussian random variables $Z_i\sim Pois(u_i)$ and  $W_i\sim \mathcal{N}(0,\sigma^2)$, respectively, for every $i=1,\ldots,M$. Again, the values $u_i$ are  random realisations of the random variables $U_i$ which are distributed according to the Gibbs model \eqref{gibbs_model} introduced before.

The Poisson noise density with parameter $u_i$ is defined as
\begin{equation} \label{poissondistribution}
P^{U_i}_{Z_i}(z_i)=\frac{{u_i}^{z_i} e^{-u_i}}{z_i!},\qquad z_i\in\R,
\end{equation}
where the factorial function is classically extended to the whole real line by using the Gamma function. We note that \eqref{poissondistribution} can be rewritten equivalently as:
\begin{equation*}
P^{U_i}_{Z_i}(z_i)=\exp\Bigl ( -u_i + \log\Bigl( \frac{u_i^{z_i}}{z_i!} \Bigr) \Bigr)\qquad z_i\in\R.
\end{equation*}

Similarly as above, let us now take the negative logarithm in the general joint MAP estimate \eqref{jointMAP1}, for all the realisations. By independence, we have:
\begin{align} 
(\hat{u},\hat{w}) & = ~\argmin_{\substack{u=(u_1,\ldots,u_M),\\ w=(w_1,\ldots,w_M)}}~-\log \Bigl(\prod_{i=1}^M P^{U_i}_{Z_i}(f_i-w_i)P_{W_i}(w_i)P_{U_i}(u_i)\Bigr) \notag \\
& = ~ \argmin_{\substack{u=(u_1,\ldots,u_M),\\ w=(w_1,\ldots,w_M)}} - \sum_{i=1}^M \log \Bigl( P^{U_i}_{Z_i}(f_i-w_i)P_{W_i}(w_i)P_{U_i}(u_i)\Bigr)  \notag\\
& = ~ \argmin_{\substack{u=(u_1,\ldots,u_M),\\ w=(w_1,\ldots,w_M)}} \sum_{i=1}^M \Bigl( u_i - \log\Bigl( \frac{u_i^{f_i-w_i}}{z_i!}\Bigr) +\frac{|w_i|^2}{2\sigma^2} +\alpha|\nabla u_i|\Bigr), \notag
\end{align}
where the constant terms which do not affect the minimisation over $u$ and $w$ have been neglected. Passing from a discrete to a continuous modelling similarly as described in the discussion for the additive case, we obtain the following continuous model
\begin{equation}   \label{modellinggausspoiss1}
\min_{u,w}~ |Du|(\Omega)+\frac{\lambda_1}{2} \| w \|_{L^2(\Omega)}^2 + \lambda_2\int_\Omega \left( u(x) -\log\Bigl( \frac{u(x)^{f(x)-w(x)}}{(f(x)-w(x))!} \Bigr) \right)~d\mu(x),
\end{equation}
where we have set $\lambda_1:=1/\alpha\sigma^2$ and $\lambda_2:=1/\alpha$ and we have still to specify the function spaces where the minimisation takes place. By a closer inspection on the third term in the model above, we have:
\begin{align*}  \label{modellinggausspoiss2}
&  \int_\Omega \left( u(x) -\log\Bigl( \frac{u(x)^{f(x)-w(x)}}{(f(x)-w(x))!} \Bigr) \right)~d\mu(x) \\
&= \int_\Omega \left(u(x) - \log\Bigl( u(x)^{f(x)-w(x)} \Bigr) + \log\left( \Bigl( f(x) -w(x) \Bigr)! \right) \right) ~d\mu(x)\\
&= \int_\Omega \left(u(x) - (f(x)-w(x))\log\left( u(x) \right) + \log\left( \Bigl( f(x) -w(x) \Bigr)! \right) \right)~d\mu(x) \\& \approx \int_\Omega \left(u(x) - (f(x)-w(x))\log\left( \frac{f(x)-w(x)}{u(x)} \right) -(f(x)-w(x))\right)~d\mu(x) = D_{KL}(f-w,u),
\end{align*}
where we have used the standard Stirling approximation of the logarithm of the factorial function. The functional we end up with is the well-known Kullback-Leibler (KL) functional which has been used for the design of imaging models used for Poisson noise removal in previous works, such as \cite{alexTVpoisson,Saw11,lechartrandTVpoisson}.  We refer the reader also to Appendix \ref{append:KLfunct} where some  properties of $D_{KL}$ are recalled. 
The variational Poisson-Gaussian data fidelity \eqref{modellinggausspoiss1} can then be written in a more compact form as
\begin{equation}  \label{infconvpoissgauss}
\min_{\substack{u\in BV(\Omega)\mathcal{A} \\w\in L^2(\Omega)\cap\mathcal{B}}}~|Du|(\Omega) +\frac{\lambda_1}{2}~ \| w \|_{L^2(\Omega)}^2 + \lambda_2~ D_{KL}(f-w,u),
\end{equation}
where the admissible sets $\mathcal{A}$ and $\mathcal{B}$ are chosen as specified in \eqref{admissiblesetsgausspoiss} to guarantee that all the terms in \eqref{infconvpoissgauss} are well-defined. We point out here again that in \cite{lanza2013} a similar joint MAP estimation for a mixed Gaussian and Poisson modelling was considered.

\medskip

In both the general model \eqref{structureMAP} and in its  simplified additive version \eqref{MAPsum} we have derived through joint MAP estimation the two variational models \eqref{TV_IC_additive2} and \eqref{infconvpoissgauss} which,
consistently with our statistical assumptions, model the single noise components by norms classically used for the corresponding single noise models and combined together for the mixed noise case through a joint minimisation procedure. The fidelity terms are weighted against each other by the parameters $\lambda_1$ and $\lambda_2$, whose size depends on the intensity of each noise component, and weight the fidelity against the TV regularisation.

\medskip

In the next section, we discuss a different interpretation of the TV-IC model, as the MAP estimate for a posterior distribution in which the two noise distributions are combined by a so-called infinity convolution, instead of the classical convolution of probability distributions. 

\subsection{Infinity convolution} \label{sec:infinityconv}

From a probabilistic point of view, convolution-type data fidelities \eqref{infconvproblem2} can be alternatively derived via a modification of the expression of the probability density of a sum of random variables. Given two independent real-valued random variables $V$ and $W$ with associated probability densities $f_V$ and $f_W$, it is well-known that the random variable $Z:=V+W$ has probability density $f_Z$ given by the convolution of $f_V$ and $f_W$, that is
\begin{equation}  \label{probdensZ:classconv}
f_Z(z)=\int_\R f_V(v)f_W(z-v)~dv=(f_V\ast f_W)(z).
\end{equation}
Following \cite[Remark 2.3.2]{hiriarturruty}, since $f_V$ and $f_W$ are nonnegative, we can define for $p>0$ the convolution of order p as follows:
\begin{equation} \label{probdensZ:convordp}
f_Z^p(z):=(f_V\ast_p f_W)(z)=\left( \int_\R \Bigl (f_V(v)f_W(z-v) \Bigr )^{p}~dv \right)^{1/p},
\end{equation}
which clearly corresponds to the classical convolution \eqref{probdensZ:classconv} if $p=1$.
By letting $p\to\infty$ it is easy to show that \eqref{probdensZ:convordp} converges to the infinity convolution of $f_V$ and $f_W$
\begin{equation} \label{probdensZ:convordinf}
f_Z^\infty(z):=((f_V\ast_\infty f_W)(z)=\sup_{v\in\R} ~ f_V(v)f_W(z-v).
\end{equation}
We note that $f_Z(z)$ in \eqref{probdensZ:classconv} and $f^\infty$ have the same domain. Also, note that in order for $f^\infty$ to be a probability density the following normalisation
$$
\tilde f_Z^\infty(z):=\frac{1}{\int f_Z^\infty(z) dz} ~f_Z^\infty(z)
$$ 
is needed.
Then,  $\tilde f_Z^\infty$ is a probability density, having the same domain of $f_Z$, but potentially featuring heavier tails. 

In the special case when the probability densities $f_V$ and $f_W$ have an exponential form of the type $f_V(\cdot)= c_V e^{-g_V(\cdot)}$ and $f_W(\cdot)= c_W e^{-g_W(\cdot)}$, with $c_V$ and  $c_W$ being positive constants and $g_V, g_W:\R\to\R^+$ continuous and convex positive functions, \eqref{probdensZ:convordinf} can be equivalently rewritten as:
\begin{equation} \label{probdensZ:convordinfexp}
(f_V\ast_\infty f_W)(z)=c_V c_W~ e^{-\inf\limits_{v\in\R} \left( g_V(v)+g_W(z-v)\right)}.
\end{equation}
Hence, the infinity convolution of $f_V$ and $f_W$ corresponds to the infimal convolution of $g_V$ and $g_W$ through a negative exponentiation and up to multiplication by positive constants.

\medskip

Let us now recall the additive model in the finite dimensional setting \eqref{finite_dim_additive}. At every pixel, $i=1,\ldots,M$ the random variable $Y_i:=F_i-U_i=V_i+W_i$ is the sum of the two independent random variables $V_i$ and $W_i$ having Laplace \eqref{probdensities:imp} and Gaussian \eqref{probdensities:gaussian} probability densities, respectively. In contrast to the previous Section \ref{sec:mapderivation} where we have computed a joint MAP estimate for $u$ and $v$, here we want to compute the MAP estimate for the modified, infinity convolution likelihood \eqref{probdensZ:convordinfexp}, optimising over $u$ only.  We will see that doing so we derive the same infimal-convolution models \eqref{TV_IC_additive2} and \eqref{infconvpoissgauss} as before. By independence of the realisations, we have that the probability density $f_Y$ of the random vector $Y=(Y_1,\ldots,Y_M)$ in correspondence with the realisation $y=(y_1,\ldots,y_M)$ reads
\begin{equation}  \label{productdensity}
f_Y(y)=\prod_{i=1}^M f_{Y_i}(y_i).
\end{equation}
Each probability density $f_{Y_i}(y_i)$ is formally given by the convolution between the Laplace and Gaussian probability distribution. However, if we replace this convolution with the infinity convolution defined in \eqref{probdensZ:convordinf} and we normalise appropriately in order to get a probability distribution, we get that the probability density of a single $Y_i$ evaluated in correspondence of one realisation $y_i$ can be expressed as
\begin{align}  
& f_{Y_i}(y_i)=\frac{1}{2\tau\sqrt{2\pi\sigma^2}}e^{-\inf\limits_{v_i\in\R} ~\left( |v_i|/\tau~+~|y_i-v_i|^2/2\sigma^2\right)},\qquad &i=1,\ldots,M. \notag\\
& \tilde{f}_{Y_i}(y_i):=\frac{1}{\sum_{i=1}^M f_{Y_i}(y_i)} f_{Y_i}(y_i),\qquad& i=1,\ldots,M. \notag
\end{align}
Plugging this expression in \eqref{productdensity} and computing the negative log-likelihood of $P(y|u)$, we get:
\begin{equation*}
 -\log P(y|u)= -\log \prod_{i=1}^M \tilde{f}_{Y_i}(y_i) = -\sum_{i=1}^M \log (\tilde{f}_{Y_i}(y_i))=-\sum_{i=1}^M \log ( \tilde{f}_{Y_i}(f_i-u_i)).
 \end{equation*}
Thus, we have that the log-likelihood we intend to minimise is
\begin{equation*}
\sum_{i=1}^M~ \inf_{v_i\in\R} ~\left( \frac{|v_i|}{\tau}+\frac{|f_i-u_i-v_i|^2}{2\sigma^2}\right),
\end{equation*} 
where the constant terms which do not affect the minimisation over $u$ have been neglected. 

Similar to before, passing from a discrete to a continuous representation we get
\begin{multline}   \label{commuteinfint}
\int_\Omega \inf_{v(x)\in\R}\left( \frac{|v(x)|}{\tau}+\frac{|f(x)-u(x)-v(x)|^2}{2\sigma^2}\right) ~ d\mu(x) \\ = \inf_{v \in L^(\Omega)} \int_\Omega \left( \frac{|v(x)|}{\tau}+\frac{|f(x)-u(x)-v(x)|^2}{2\sigma^2}\right) ~ d\mu(x),\qquad\qquad
\end{multline}
where the infimum and the integral operators commute by assuming that the function space for $v$ is $L^2(\Omega)$ as stated in \cite[Theorem 3A]{rockafell76} (see also \cite{HafsaInfInt2003} for further details). This also ensures that the integrand terms are both well defined. Similarly as above, we now define  $\lambda_1:=1/\tau$ and $\lambda_2:=1/\sigma^2$ and derive from \eqref{commuteinfint} the same following data fidelity term as in \eqref{TV_IC_additive2}.

The same computation can be done for the signal dependent case of the Poisson-Gaussian noise mixture \eqref{poisson_gauss_strcture} with \eqref{poissondistribution}, thus obtaining the same model \eqref{infconvpoissgauss}.

\subsection{Connection with existing approaches}  \label{subsect:connectexistappr}

Several variational models for image denoising in the presence of combined noise distributions have been considered in the literature. 

In the case of a mixture of impulsive and Gaussian noise these models can be roughly divided in two categories. The first category considers additive, $L^1$+$L^2$ data fidelities. In \cite{HintermuellerLanger2013}, for instance, the image domain is decomposed in two parts, with impulsive noise in one and Gaussian noise in the other, modelled by the sum of an $L^1$ and $L^2$ data fidelity supported on the respective domain with Gaussian or impulse noise. For the numerical solution an efficient domain decomposition approach is used. In \cite{noiselearning,lucasampling,bilevellearning} semi-smooth Newton's methods are employed to solve a denoising model where $L^1$ and $L^2$ data fidelities are combined in an additive fashion. A second category of methods renders the removal of impulsive and Gaussian noise in a two-step procedure (see, e.g. \cite{impulsegauss2008}). In the first phase the pixels damaged by the impulsive noise component are identified via an outlier-removal procedure and removed making use of a variational regularisation model with $L^1$ data fitting. Then a variational denoising model with an $L^2$ data fidelity is employed for removing the Gaussian noise in all other pixels. These methods are often presented as image inpainting strategies where the pixels corrupted by impulsive noise are first identified and then filled in using the information nearby with a variational regularisation approach. 

When a mixture of Gaussian and Poisson noise is assumed, an exact log-likelihood model has been considered in \cite{Jezierska2012,poissongauss2013}. In the same discrete setting described above, still denoting by $M$ the total number of pixels, the expression of the negative log-likelihood reads
\begin{equation}\label{eq:exactGaussPoiss}
\Phi_{PG}(u,f)=\sum_{i=1}^M \Bigl( -\log \sum_{\pi=0}^\infty \frac{{u_i}^{\pi} e^{-u_i}}{\pi!} \frac{e^{-\Bigl ( \frac{f_i-\pi}{\sqrt{2}\sigma} \Bigr)^2}}{\sqrt{2\pi}\sigma}\Bigr).
\end{equation}
In comparison with our derivation presented in Section \ref{sec:infinityconv}, here the log-likelihood $\Phi_{PG}(u,f)$ is derived via MAP estimation for the logarithm of the convolution \eqref{probdensZ:classconv} of the Gaussian probability density \eqref{probdensities:gaussian} with the Poisson probability density \eqref{poissondistribution}. Instead, the model we propose is derived equivalently either by replacing the convolution with the infinity convolution or by a joint MAP estimation. The resulting variational model \eqref{infconvpoissgauss} is much simpler since, in particular, it makes easier dealing with the infinite sum appearing in  \eqref{eq:exactGaussPoiss} which is related to the discrete Poisson density function. In order to design an efficient optimisation strategy, the authors first split the expression above into the sum of two different terms, the former being a convex Lipschitz-differentiable function, the latter being a proper, convex and lower semi-continuous function. In particular, the authors are able to design a competitive first-order optimisation method based on the use of primal-dual splitting algorithms exploiting the closed-form of the proximal operators associated to the convex Lipschitz-differentiable function. The advantage of such algorithms is that it is only based on the iterative application of the proximal mapping operations, without requiring any matrix inversion. Furthermore, the approximation error accumulating throughout the iterations of the algorithm are shown to be absolutely summable sequences, a property which is essential in this framework due to the presence of infinite sums.

Another approach for mixed Gaussian and Poisson noise is considered in \cite{lanza2013}, where the authors design a data fidelity term similar to \eqref{infconvpoissgauss} combining it with TV regularisation for image denoising. Despite the analogies between the joint MAP estimation of their and our model, in their work no well-posedness results in fuction species nor properties of noise decompositions are discussed. Nonetheless, in \cite{lanza2013} the good performance of the combined model is observed in terms of improvements of the Peak Signal to Noise Ratio (PSNR) for several synthetic and microscopy images.

Differently from most aforementioned works, the simple structure of our models  \eqref{TV_IC_additive2}  and \eqref{infconvpoissgauss} allows for the design of efficient first and second-order numerical schemes. In this paper we focus on the latter. Our approach is further able to `decompose' the noise into its different statistical components, each corresponding to one particular noise distribution in the data. The authors are not aware of any existing method dealing with such a feature.

\section{Well-posedness of the TV-IC model}  \label{subsec:wellposednessBV}

Thanks to Proposition \ref{wellposednimpgauss} and Proposition \ref{wellposednessgausspoiss}, we can conclude that in both cases \eqref{impgaussinfconvfidelity} and \eqref{gausspoissinfconvfidelity}, the function $\Phi^{\lambda_1,\lambda_2}$ is well-defined and that for every $u\in BV(\Omega)\cap\mathcal{A}$ there exists a unique element $v^*(u)\in L^2(\Omega)\cap\mathcal{B}$ minimising the functional $ \mathscr{F}^{\lambda_1,\lambda_2}(f,u,\cdot)$. Problem \eqref{infconvproblemu} can then be rewritten as
\begin{equation}  \label{minimisprobuoptv2}
\min_{u\in BV\Omega)\cap\mathcal{A}} \left\{ J(u):=|Du|(\Omega) +  \lambda_1~ \Phi_1(v^*(u)) + \lambda_2 ~\Phi_2(u,f-v^*(u))  \right\},
\end{equation}
where $v^*(u)\in L^2(\Omega)\cap\mathcal{B}$ is the unique solution of \eqref{infconvproblem2} in one of the two cases \eqref{impgaussinfconvfidelity} and \eqref{gausspoissinfconvfidelity}. In particular, for every $u\in BV(\Omega)\cap\mathcal{A}$, there is a positive finite constant $C=C(u)$ such that
\begin{equation}  \label{estimatevstar}
\| v^*(u) \|_{L^2(\Omega)} \leq C(u).
\end{equation}
Note that the constant in \eqref{estimatevstar} may depend on $u$ and hence does not necessarily bound $v$ uniformly. For the following existence proof, therefore, we restrict the admissible set of solutions for $v$ by intersecting it with the closed ball in $L^2(\Omega)$ with fixed radius $R>0$, centred in $f$. That is, in \eqref{infconvproblem2} we consider a new admissible set
\begin{equation}   \label{admissible_sets_ball}
\hat{\mathcal{B}}:= L^2(\Omega)\cap \mathcal{B} \cap B_R(f),
\end{equation}
where $B_R(f):=\left\{ z\in L^2(\Omega): \|z-f\|_{L^2(\Omega)} \leq R \right\}$ for some $R>0$ and $\mathcal{B}$ is defined as before in each case. Since $B_R(f)$ is compact and convex, the well-posedness properties studied in Section \ref{sec:model} still hold true. In addition, one can now easily compute the constant $C$ of \eqref{estimatevstar} by Young's inequality. Namely, for every $u\in BV(\Omega)\cap \mathcal{A}$ one has now
\begin{equation}  \label{estimatevar_unif}
\| v^*(u) \|_{L^2(\Omega)}\leq C,
\end{equation}
with $C:=\sqrt{2(R^2+\|f\|_{L^2(\Omega)}^2)}$. This additional assumption is reasonable for our applications since we want the noise component $v$ to preserve some similarities with the given image $f$ in terms of its noise features.


After this modification, we can now state and prove the main well-posedness result or the salt \& pepper-Gaussian noise combination described in Section \ref{subsec:gaussimp} and the Gaussian-Poisson model in Section \ref{subsec:gausspoiss} by means of standard tools of calculus of variations. We refer the reader also to  \cite{vese2001study} and \cite{Bru10} where similar results are proved for $L^1$ and Kullback-Leibler-type data fidelities, respectively.

\begin{theorem}  \label{wellposednessBV}
Let $\lambda_1,\lambda_2>0$ and let us denote with $v^*(u)\in \hat{\mathcal{B}}$ defined in \eqref{admissible_sets_ball} the solution of the minimisation problem \eqref{infconvproblem2} in one of the two cases \eqref{impgaussinfconvfidelity} and \eqref{gausspoissinfconvfidelity} provided by Propositions \ref{wellposednimpgauss} and \ref{wellposednessgausspoiss}, respectively, for every $u\in BV(\Omega)\cap\mathcal{A}$. Then, there exists a unique solution  of the minimisation problem \eqref{minimisprobuoptv2}.
\end{theorem}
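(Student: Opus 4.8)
The plan is to prove existence by the direct method of the calculus of variations applied to the \emph{joint} functional, and uniqueness by convexity. The first step is to reformulate \eqref{minimisprobuoptv2} as a joint minimisation over the pair $(u,v)$. Since Propositions \ref{wellposednimpgauss} and \ref{wellposednessgausspoiss} (applied on the restricted set $\hat{\mathcal{B}}$ of \eqref{admissible_sets_ball}) guarantee that for each $u\in BV(\Omega)\cap\mathcal{A}$ the inner problem \eqref{infconvproblem2} has a unique minimiser $v^*(u)$, minimising $J$ over $u$ is equivalent to minimising
\[
G(u,v):=|Du|(\Omega)+\lambda_1\,\Phi_1(v)+\lambda_2\,\Phi_2(u,f-v)
\]
over $(u,v)\in(BV(\Omega)\cap\mathcal{A})\times\hat{\mathcal{B}}$, in the sense that the $u$-component of any minimiser of $G$ solves \eqref{minimisprobuoptv2} with $v=v^*(u)$. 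Working with $G$ avoids handling the infimum directly and is precisely where the bounded admissible set $\hat{\mathcal{B}}$ pays off.

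Next I would run the direct method on $G$. Take a minimising sequence $(u_n,v_n)$; the value $G(u_n,v_n)$ is bounded above (evaluating $G$ at any fixed feasible pair) and each of the three terms is bounded below by $0$, so $|Du_n|(\Omega)$ stays bounded and $\Phi_1(v_n),\Phi_2(u_n,f-v_n)$ stay bounded. By construction $v_n\in\hat{\mathcal{B}}$ is bounded in $L^2(\Omega)$ by \eqref{estimatevar_unif}, hence, up to a subsequence, $v_n\rightharpoonup v^*$ weakly in $L^2(\Omega)$; since $\hat{\mathcal{B}}$ is convex and closed it is weakly closed, so $v^*\in\hat{\mathcal{B}}$. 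For $u_n$ I would obtain a $BV$ bound by combining the uniform control of $|Du_n|(\Omega)$ with coercivity of the fidelity: splitting $u_n$ into its mean and oscillation, the Poincar\'e--Wirtinger inequality bounds the oscillation by the total variation, while the data term (the Moreau-envelope/Huber growth in the salt \& pepper--Gaussian case \eqref{impgaussinfconvfidelity}, and the $\int u$ and $-\log u$ growth of $D_{KL}$ in the Gaussian--Poisson case \eqref{gausspoissinfconvfidelity}) controls the mean. Thus $(u_n)$ is bounded in $BV(\Omega)$ and, by compact embedding, a subsequence converges to some $u^*$ strongly in $L^1(\Omega)$ and weakly in $L^2(\Omega)$.

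The decisive step is lower semicontinuity along these convergences. The total variation is lower semicontinuous with respect to $L^1$ convergence; $\Phi_1$ (either $\|\cdot\|_{L^1}$ or $\tfrac12\|\cdot\|_{L^2}^2$) is convex and strongly lower semicontinuous, hence weakly $L^2$ lower semicontinuous; and $\Phi_2$ is jointly convex and lower semicontinuous in $(u,v)$ --- immediate for the $L^2$ discrepancy, and for the Kullback--Leibler term a consequence of the joint convexity and lower semicontinuity recalled in Appendix \ref{append:KLfunct}. Passing to the $\liminf$ then gives $G(u^*,v^*)\le\liminf_n G(u_n,v_n)=\inf G$, so $(u^*,v^*)$ is a minimiser. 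I expect \emph{this} step to be the main obstacle, for two reasons specific to the Gaussian--Poisson case: one must verify that the limit still lies in the admissible set $\mathcal{A}$ of \eqref{admissiblesetsgausspoiss} (the constraints $u^*\ge0$ and $\log u^*\in L^1(\Omega)$ are not automatically preserved under weak limits and must be secured, e.g.\ through the finiteness of $D_{KL}$ forced by the uniform energy bound), and one must handle the lower semicontinuity of $D_{KL}(f-v,u)$ jointly in both weakly converging arguments rather than in $u$ alone.

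Finally, uniqueness follows from convexity of $J$ in \eqref{minimisprobuoptv2}, which is the sum of the convex total variation and the convex reduced fidelity $\Phi^{\lambda_1,\lambda_2}(\cdot,f)$ (a partial infimum of a jointly convex integrand). To upgrade convexity to uniqueness one invokes strict convexity of the data contribution: in the Gaussian--Poisson case this comes from the strict convexity of $u\mapsto D_{KL}(f-v^*,u)$, whereas in the salt \& pepper--Gaussian case one exploits the strict convexity of the quadratic discrepancy in the combined argument together with the proximal-map properties recorded in Remark \ref{remark:prox_map}. Establishing strict convexity, rather than mere convexity, is the delicate point of this part of the argument.
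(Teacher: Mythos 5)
Your proposal is correct and follows the same overall skeleton as the paper's proof (direct method, $BV$ compactness from the restricted set $\hat{\mathcal{B}}$, lower semicontinuity, then strict convexity for uniqueness), but two sub-steps are implemented with genuinely different tools, so a comparison is worth recording. First, you pass to the joint functional $G(u,v)$ from the outset; the paper works with the reduced functional $J(u)$ and only introduces the sequence $v_n:=v^*(u_n)$ with its weak $L^2$ limit inside the Gaussian--Poisson lower-semicontinuity argument --- which is your joint approach in disguise, and which the paper itself later formalises as \eqref{jointminimisation}. Second, for coercivity you use Poincar\'e--Wirtinger to control the oscillation by $|Du_n|(\Omega)$ and let the fidelity control the mean, whereas the paper bounds $\|u_n\|_{L^1}$ directly via Young's inequality in the salt \& pepper--Gaussian case and by citing the $BV$-coercivity lemma for the TV-KL functional from \cite{Saw11} in the Gaussian--Poisson case; both routes work, and in the KL case the quantitative mechanism behind yours is really the Pinsker-type estimate \eqref{KLestimate}, which you should invoke explicitly to close the argument. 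Third, for the weak lower semicontinuity of the $L^1$-$L^2$ fidelity the paper uses the firm nonexpansiveness of the proximal map from Remark \ref{remark:prox_map} (so the reduced fidelity is actually Lipschitz continuous, a stronger statement), while you rely on joint convexity and strong lower semicontinuity of the integrands; your argument is more uniform across the two noise models, the paper's is sharper in the case it covers. Your flagged concerns --- preservation of the admissible set $\mathcal{A}$ under weak limits and strictness (rather than mere convexity) of the data term, the latter being genuinely delicate for the Huber/Moreau-envelope fidelity which is affine outside a ball --- are points the paper asserts rather than proves, so they do not constitute gaps relative to the paper's own argument.
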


\begin{proof}
Let $\left\{ u_n \right\}\subset BV(\Omega)\cap \mathcal{A}$ be a minimising sequence for $J$. Such sequence exists since the functional $J$ is nonnegative. Neglecting the positive contribution given by $\Phi_1$, we have:
\begin{equation}    \label{coercivity1}
| Du_n |(\Omega)+\lambda_2 ~\Phi_2(u_n,f-v^*(u_n))\leq |Du_n|(\Omega) + \Phi^{\lambda_1,\lambda_2}(u_n,f) \leq M,\quad\text{ for all }n
\end{equation}
for some finite constant $M>0$. To show the uniform boundedness of the sequence $\left\{ u_n \right\}$ in $BV(\Omega)$ we first observe that using the positivity of $\Phi_2$, \eqref{coercivity1} we have
\begin{equation}  \label{TV_bound}
| Du_n |(\Omega)\leq M,\quad\text{ for all }n.
\end{equation}
Next, in order to get appropriate bounds for $\{u_n\}$ in $L^1(\Omega)$ we need to differentiate the two cases considered.
\begin{itemize}
\item[-] \underline{\smash{Gaussian-salt \& pepper case}}: For $\Phi_2(u_n,f-v^*(u_n))=\frac{1}{2}\| f-v^*(u_n)-u_n\|^2_{L^2(\Omega)}$, we get from \eqref{coercivity1} that by Young's inequality:
\begin{equation*} 
M\geq \Phi_2(u_n,f-v^*(u_n))=\frac{1}{2}\| f-u_n-v^*(u_n)\|_{L^2(\Omega)}^2\geq C_1\| u_n \|_{L^1(\Omega)}^2 -C_2,
\end{equation*}
where $C_1:=\frac{1}{4|\Omega|}$ and $C_2:=\frac{1}{2}R^2$ is finite by the uniform bound \eqref{admissible_sets_ball}. Hence, $\left\{ u_n \right\}$ is bounded in $L^1(\Omega)$ in this case, which, combined with \eqref{TV_bound} gives us that the sequence $\left\{ u_n \right\}$ is bounded in $BV(\Omega)$.
\item[-] \underline{\smash{Gaussian-Poisson case}}: If $\Phi_2(u_n,f-v^*(u_n))=D_{KL}(f-v^*(u_n),u_n)$, then by using the uniform bound  \eqref{admissible_sets_ball} on $g:=f-v^*(u_n)$ we can apply directly the $BV$-coercivity result for the standard TV-KL functional proved in \cite[Lemma 6.3.2]{Saw11} to \eqref{coercivity1}:
\begin{equation*}
C\| u_n \|_{BV(\Omega)} \leq | Du_n |(\Omega)+\lambda_2 ~D_{KL}(g,u_n) \leq J(u_n) \leq M,\quad\text{for every }n
\end{equation*}
to conclude that also in this case $\left\{ u_n \right\}$ is bounded in $BV(\Omega)$.
\end{itemize}

\medskip

Thanks to the uniform boundedness of the sequence $\left\{ u_n \right\}$ in $BV(\Omega)$ and since $BV(\Omega)$ is embedded in $L^1(\Omega)$ with compact embedding, we have that, up to a non-relabelled subsequence, there is $u\in BV(\Omega)$ such that:
\begin{align*}
 u_n \stackrel{*}{\rightharpoonup} u  \text{ in }BV(\Omega),  \qquad u_n\to u \text{ in }L^1(\Omega).
\end{align*}  
Moreover, since the image domain $\Omega$ is bounded, a further non-relabelled subsequence $\left\{u_n\right\}$ converging pointwise to $u$ a.e. in $\Omega$ can be extracted. We claim now that $J$ is weakly lower semicontinuous in $L^1(\Omega)$. To show that, we combine the lower semicontinuity property of $|D u|(\Omega)$
 with respect to the strong topology $L^1$ with continuity properties for the data discrepancies \eqref{impgaussinfconvfidelity} and \eqref{gausspoissinfconvfidelity}. 
\begin{itemize}
\item[-] \underline{\smash{Gaussian-salt \& pepper case}}: Recalling Remark \ref{remark:prox_map} and \cite[Proposition 12.27]{bauschkecombettes}, we have that the data fidelity $\Phi^{\lambda_1,\lambda_2}(u,f)$ is in this case a proximal mapping and, as such, is firmly nonexpansive, thus nonexpansive, i.e. Lipschitz continuous with Lipshitz constant equal to one.
\item[-] \underline{\smash{Gaussian-Poisson case}}: Recalling Proposition \ref{propos:KLproperties} in the Appendix \ref{append:KLfunct}, we observe that the functional $\Phi_2(u_n,f-v^*(u_n))=D_{KL}(f-v^*(u_n),u_n)$ is weakly lower semi-continuous in $L^1(\Omega)$ in both arguments. In fact, by assumptions on the admissible sets $\mathcal{A}$ and $\hat{\mathcal{B}}$ (see \eqref{admissiblesetsgausspoiss} and \eqref{admissible_sets_ball}), we have that for every $n$ the first argument $f-v_n$ is nonnegative and integrable and the second argument $u_n$ is nonnegative and bounded in $L^1$ from what shown above. Let us now define for every element $u_n\in BV(\Omega)\cap\mathcal{A}$ the corresponding unique solution $v_n$ of the minmisation problem \eqref{gausspoissinfconvfidelity} provided by Proposition \ref{wellposednessgausspoiss}, so let $v_n:=v^*(u_n)$ for every $n$. By the uniform estimate \eqref{estimatevar_unif}, we have 
$$
\| v_n \|_{L^2(\Omega)} \leq C\quad\text{for every } n.
$$
We can then extract a non-relabelled subsequence and find an element $\hat{v}\in L^2(\Omega)$ such that $v_n\to \hat{v}$  weakly in $L^2(\Omega)$, whence weakly in $L^1(\Omega)$. We can then write:
\begin{multline}
\Phi^{\lambda_1,\lambda_2}(u,f)= \min_{v\in\hat{\mathcal{B}}} ~ \frac{\lambda_1}{2}\|v\|_{L^2(\Omega)}^2 + \lambda_2~D_{KL}(f-v,u)\\ \leq  \frac{\lambda_1}{2}\|\hat{v}\|_{L^2(\Omega)}^2 + \lambda_2~D_{KL}(f-\hat{v},u) \leq
\liminf_{n\to\infty}~\frac{\lambda_1}{2}\|v_n\|_{L^2(\Omega)}^2 + \liminf_{n\to\infty}~\lambda_2~D_{KL}(f-v_n,u_n)\\
\leq\liminf_{n\to\infty}~ \frac{\lambda_1}{2}\|v_n\|_{L^2(\Omega)}^2+\lambda_2 ~D_{KL}(f-v_n,u_n) =\liminf_{n\to\infty}~\Phi^{\lambda_1,\lambda_2}(u_n,f),  \notag
\end{multline}
which is the required weak lower semicontinuity property. 
\end{itemize}  

Furthermore, in both cases the functional $J$ is strictly convex in both cases because of the the presence of the quadratic $L^2$ term. These properties, guarantee existence and uniqueness of the minimiser $u$ in $BV(\Omega)$. Moreover, $u$ is indeed an element of the admissible set $\mathcal{A}$ as a consequence of the fact that $\mathcal{A}$ is a closed and convex subspace of $BV(\Omega)$ in both cases considered and, as such, weakly closed by Mazur's  lemma. 
\end{proof}

\begin{remark}   \label{unif_bound_l1l2}
In the case of $L^1$-$L^2$ TV-IC model \eqref{impgaussinfconvfidelity},  we can alternatively show well-posedness using a similar argument as in \cite[Lemma 2.4, Proposition 3.2]{VaggelisIC2015} by considering the IC $1$-\emph{homogeneous} functional $\Phi^{\lambda_1,\lambda_2}_{1-hom}$ defined as:
$$
\Phi^{\lambda_1,\lambda_2}_{1-hom}(u,f):= \min_{v\in L^2(\Omega)}~\lambda_1 \| v \|_{L^1(\Omega)} + \lambda_2 \| f- v- u\|_{L^2(\Omega)}.
$$
By standard lower semi-continuity arguments, one can show that the minimum above is attained.  By considering the analogous TV-type problem:
\begin{equation}  \label{TV_IC_1hom}
\min_{u\in BV(\Omega)\cap\mathcal{A}}~\left\{ |Du|(\Omega) + \Phi^{\lambda_1,\lambda_2}_{1-hom}(u,f)\right\},
\end{equation}
one can further show that for every minimising sequence $\left\{u_n\right\}\subset BV(\Omega)\cap\mathcal{A}$, an estimate similar to \eqref{coercivity1} implies that for every $n$ and every $w\in L^2(\Omega)$:
\begin{multline}
\|u_n\|_{L^1(\Omega}-\|f\|_{L^1(\Omega)} \leq \|u_n-f\|_{L^1(\Omega)} \leq \|u_n-f-w\|_{L^1(\Omega)} + \|w\|_{L^1(\Omega)} \notag \\ \leq \|u_n-f-w\|_{L^1(\Omega)} + |\Omega|^{1/2}\| w \|_{L^2(\Omega)} \leq C_1\left(\|u_n-f-w\|_{L^1(\Omega)} +\| w \|_{L^2(\Omega)}\right). \notag
\end{multline}
where $C_1:=\max\left\{\Omega|^{1/2},1\right\}$. Hence, for an appropriate choice of $C_2$ we can get
$$
C_2\|u_n\|_{L^1(\Omega)} \leq  \lambda_1\|u_n-f-w\|_{L^1(\Omega)} +\lambda_2\| w \|_{L^2(\Omega)} \leq M + C_2\|f\|_{L^1(\Omega)}<\infty\quad\text{for every }n,
$$
for every $w\in L^2(\Omega)$ and for $M>0$. In this case, we do not need to restrict the set $\hat{\mathcal{B}}$ as in \eqref{admissible_sets_ball}, since the uniform bound of $\left\{u_n\right\}$ in $L^1(\Omega)$ is uniform by standard $L^p$ inclusions. To conclude, one can then apply \cite[Proposition 3.2]{VaggelisIC2015} which ensures that the $1$-\emph{homogeneous} problem \eqref{TV_IC_1hom} and its $2$-homogeneous version with data fidelity \eqref{impgaussinfconvfidelity} have in fact the same minimisers.
We remark here that since the same argument does not apply to the case of Kullback-Leibler functional, the restriction to the set $\hat{\mathcal{B}}$ \eqref{admissible_sets_ball} is needed to prove the theorem \ref{wellposednessBV} for the TV-IC $L^2$-KL case \eqref{gausspoissinfconvfidelity}.
\end{remark}

\subsection{Well-posedness of Huber-regularised TV-IC}  \label{subsec:wellposedness}

In view of the numerical realisation of the TV-IC via a gradient-based method presented in Section \ref{sec:numres} we smooth the TV energy to avoid the multivaluedness of its subdifferential and, consequently, to be able to deal with unique gradients. In particular, we consider a standard Huber-type smoothing of TV  depending on a parameter $\gamma\gg 1$.
For a general function $z:\Omega\to \R^\ell$, the Huber regularisation of $|z|$ is defined as:
\begin{equation}   \label{huberregular}
|z|_\gamma := \begin{cases}
|z| - \frac{1}{2\gamma},\quad&\text{if }|z|\geq \frac{1}{\gamma} \\
\frac{\gamma}{2}|z|^2,\quad&\text{if } |z|<\frac{1}{\gamma}
\end{cases}.
\end{equation}
In words, in the proximity of the points where $|z|$ is small a quadratic smoothing is used, while essentially the function  is kept the same for large values of $|z|$. Clearly, the non-smooth version is recovered letting $\gamma\to\infty$. Denoting by $\mathcal{L}^2$ the usual Lebesgue measure in $\R^2$ and by $\mathcal{B}(\Omega)$ the $\sigma$-algebra of $\Omega$ , let $Du=\nabla u~\mathcal{L}^2+ D_s u$ be the Lebesgue decomposition of the two-dimensional distributional gradient $Du$ into its absolutely continuous $\nabla u~\mathcal{L}^2$ and  singular $D_s u$ parts. Then, the Huber-regularisation of the total variation measure $|Du|$ is defined as
\begin{equation}   \label{huberregularTV}
| Du |_\gamma (V) := \int_V |\nabla u|_\gamma~dx + \int_V |D_s u|,\quad V\in\mathcal{B}(\Omega),
\end{equation}
i.e. we regularise the absolutely continuous part using \eqref{huberregular}. We then consider the following Huber-regularised version of \eqref{minimisprobuoptv2}:
\begin{equation}  \label{minimisprobuoptvhuber}
\min_{u\in BV(\Omega)\cap\mathcal{A}} \left\{ J_\gamma(u):=|Du|_\gamma(\Omega) + \lambda_1\Phi_1(v^*(u))+\lambda_2 ~\Phi_2(u,f-v^*(u))  \right\},
\end{equation}
where, as before, for every $u\in L^2(\Omega)\cap\mathcal{B}$ the element $v^*(u)\in \hat{\mathcal{B}}$ is the unique solution of \eqref{infconvproblem2} and $\hat{\mathcal{B}}$ is defined as in \eqref{admissible_sets_ball}. As a Corollary of Theorem \ref{wellposednessBV} we show now that also the regularised TV-IC problem is well-posed.

\begin{corollary}
Let $\lambda_1,\lambda_2>0$ and let  $v^*(u)\in \hat{\mathcal{B}}$ be the solution of the minimisation problem \eqref{infconvproblem2} for every $u\in BV(\Omega)\cap\mathcal{A}$. Then, there exists a unique solution $u\in BV(\Omega)\cap\mathcal{A}$ of the Huber-regularised minimisation problem \eqref{minimisprobuoptvhuber}.
\end{corollary}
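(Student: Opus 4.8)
The plan is to deduce this corollary directly from Theorem~\ref{wellposednessBV} by observing that the only structural change between \eqref{minimisprobuoptv2} and \eqref{minimisprobuoptvhuber} is the replacement of $|Du|(\Omega)$ by its Huber-regularised version $|Du|_\gamma(\Omega)$, while the data-fidelity part $\lambda_1\Phi_1(v^*(u))+\lambda_2\Phi_2(u,f-v^*(u))$ is left untouched. Since all the arguments in the proof of Theorem~\ref{wellposednessBV} that concern the fidelity term --- coercivity in $L^1(\Omega)$, weak lower semicontinuity of $\Phi^{\lambda_1,\lambda_2}$, strict convexity from the quadratic $L^2$ contribution, and weak closedness of $\mathcal{A}$ --- carry over verbatim, it suffices to verify that $|Du|_\gamma(\Omega)$ enjoys the same two properties that $|Du|(\Omega)$ was used for in that proof, namely providing a uniform $BV$-bound on minimising sequences and being weakly-$*$ lower semicontinuous (equivalently, lower semicontinuous with respect to strong $L^1$ convergence).

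First I would record the elementary pointwise comparison, immediate from the definition \eqref{huberregular},
\begin{equation*}
|z| - \frac{1}{2\gamma} \leq |z|_\gamma \leq |z|,
\end{equation*}
which, integrated over the absolutely continuous part and combined with the singular part in \eqref{huberregularTV}, yields the two-sided estimate
\begin{equation*}
|Du|(\Omega) - \frac{|\Omega|}{2\gamma} \leq |Du|_\gamma(\Omega) \leq |Du|(\Omega).
\end{equation*}
The lower bound immediately transfers the coercivity step: if $\{u_n\}$ is a minimising sequence for $J_\gamma$ with $J_\gamma(u_n)\leq M$, then $|Du_n|(\Omega)\leq M + |\Omega|/(2\gamma)$, so that the bound \eqref{TV_bound} holds with a constant enlarged by the fixed quantity $|\Omega|/(2\gamma)$. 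The subsequent $L^1(\Omega)$-bounds in both the Gaussian--salt~\&~pepper and the Gaussian--Poisson case are derived purely from the fidelity term and the uniform estimate \eqref{estimatevar_unif}, and hence are unaffected; together they again give uniform boundedness of $\{u_n\}$ in $BV(\Omega)$, compact embedding into $L^1(\Omega)$, and a subsequence with $u_n\stackrel{*}{\rightharpoonup}u$ in $BV(\Omega)$ and $u_n\to u$ in $L^1(\Omega)$.

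It then remains to establish weak lower semicontinuity of $J_\gamma$, for which the only new ingredient is the lower semicontinuity of $u\mapsto|Du|_\gamma(\Omega)$ along $u_n\to u$ in $L^1(\Omega)$; the fidelity part is handled exactly as in Theorem~\ref{wellposednessBV}. The cleanest way to obtain this is to note that $|Du|_\gamma$ admits a dual representation analogous to \eqref{eq:TV}: writing $\psi_\gamma$ for the (convex, Lipschitz) Huber integrand, one has $|Du|_\gamma(\Omega)=\sup_{\xi}\int_\Omega\bigl(u\,\mathrm{div}(\xi)-\psi_\gamma^*(\xi)\bigr)\,dx$ over $\xi\in C_c^\infty(\Omega,\R^2)$ with $\|\xi\|_\infty\leq 1$, where $\psi_\gamma^*$ is the Fenchel conjugate; as a supremum of functionals that are continuous under $L^1$ convergence, it is lower semicontinuous, which is the property needed. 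I expect this lower-semicontinuity verification to be the main (and only genuine) obstacle, since the remainder of the argument is a transparent transcription of the unregularised case; the conclusion, existence together with uniqueness from strict convexity of the $L^2$ term and weak closedness of $\mathcal{A}$ via Mazur's lemma, then follows as in Theorem~\ref{wellposednessBV}.
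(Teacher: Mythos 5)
Your proof is correct and follows essentially the same route as the paper's: transfer the coercivity and fidelity estimates from Theorem \ref{wellposednessBV} and add lower semicontinuity of $|Du|_\gamma(\Omega)$ with respect to strong $L^1$ convergence (which the paper simply cites from the literature, while you sketch it via the dual representation). Your explicit two-sided bound $|Du|(\Omega)-|\Omega|/(2\gamma)\leq|Du|_\gamma(\Omega)\leq|Du|(\Omega)$ is in fact slightly more careful than the paper's displayed inequality, which omits the additive constant $|\Omega|/(2\gamma)$.
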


\begin{proof}
The Huber regularisation function is coercive and has at most linear growth (cf. \cite[Theorem 2.1]{noiselearning}). Forgetting the contribution coming from the positive term $\Phi_1$, we have:
\begin{multline}    
| Du_n |(\Omega)+\lambda_2 ~\Phi_2(u_n,f-v^*(u_n))  \notag \\ \leq  |D u_n|_\gamma(\Omega)+\lambda_1\Phi_1(v^*(u_n))+\lambda_2 ~\Phi_2(u_n,f-v^*(u_n)) \leq M,\quad\text{ for all }n. \notag
\end{multline}
for every minimising sequence $\left\{u_n\right\}$ in $BV(\Omega)\cap \mathcal{A}$.  To conclude, we can simply use the result proved in Theorem \ref{wellposednessBV} for the non-smooth case combined with the lower-semicontinuity property of $|Du|_\gamma(\Omega)$ with respect to the strong topology of $L^1(\Omega)$ (see, e.g., \cite{demengeltemam}).
\end{proof}

We now connect the solution of the regularised minimisation problem \eqref{minimisprobuoptvhuber} to a solution of the non-smooth problem \eqref{minimisprobuoptv2} via $\Gamma$-convergence \cite{dalmasoGamma}.

\begin{theorem}   \label{gammaconvergenceTV}
The sequence of functionals $J_\gamma:BV(\Omega)\times\mathcal{A}\to\R$ defined in \eqref{minimisprobuoptvhuber} $\Gamma$-converges to the functional $J:BV(\Omega)\times\mathcal{A}\to\R$ defined in \eqref{minimisprobuoptv2} as $\gamma\to \infty$. Hence, the minimiser of $J_\gamma$ converges to a minimiser of $J$  as $\gamma\to\infty$ in $BV(\Omega)\cap\mathcal{A}$.
\end{theorem}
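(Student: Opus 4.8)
The plan is to exploit the fact that $J_\gamma$ and $J$ differ only in their regularisation term: the data fidelity $\Phi^{\lambda_1,\lambda_2}(u,f)=\lambda_1\Phi_1(v^*(u))+\lambda_2\Phi_2(u,f-v^*(u))$ enters both functionals identically, since the inner minimiser $v^*(u)$ supplied by Propositions \ref{wellposednimpgauss} and \ref{wellposednessgausspoiss} does not depend on $\gamma$. First I would record the elementary estimate for the Huber function \eqref{huberregular}, namely $0\le |z|-|z|_\gamma\le \tfrac{1}{2\gamma}$ for every $z$, with $\gamma\mapsto|z|_\gamma$ nondecreasing (both branches of \eqref{huberregular} increase in $\gamma$ and agree at the breakpoint $|z|=1/\gamma$). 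Integrating this bound over the absolutely continuous part of $Du$ and using \eqref{huberregularTV}, one gets $0\le |Du|(\Omega)-|Du|_\gamma(\Omega)\le \tfrac{|\Omega|}{2\gamma}$ for \emph{every} $u\in BV(\Omega)$. Consequently $|J_\gamma(u)-J(u)|\le \tfrac{|\Omega|}{2\gamma}$ for every admissible $u$, i.e. $J_\gamma\to J$ \emph{uniformly} on $BV(\Omega)\cap\mathcal{A}$ as $\gamma\to\infty$.

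The $\Gamma$-convergence, taken with respect to the strong $L^1(\Omega)$ topology (the natural choice given the compact embedding $BV(\Omega)\hookrightarrow L^1(\Omega)$ used in Theorem \ref{wellposednessBV}), then follows almost immediately from this uniform convergence. For the $\liminf$ inequality, given any $u_\gamma\to u$ in $L^1(\Omega)$ I would use the uniform bound to write $J_\gamma(u_\gamma)\ge J(u_\gamma)-\tfrac{|\Omega|}{2\gamma}$ and pass to the limit, invoking the $L^1$-lower semicontinuity of $J$ already established within the proof of Theorem \ref{wellposednessBV} (lower semicontinuity of $|Du|(\Omega)$ together with the continuity, resp. lower semicontinuity, of the two data discrepancies). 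This yields $\liminf_{\gamma\to\infty}J_\gamma(u_\gamma)\ge\liminf_{\gamma\to\infty}J(u_\gamma)\ge J(u)$. For the $\limsup$ inequality it suffices to take the constant recovery sequence $u_\gamma\equiv u$: since $|Du|_\gamma(\Omega)\le |Du|(\Omega)$ we have $J_\gamma(u)\le J(u)$, and in fact $J_\gamma(u)\to J(u)$ by the same estimate, so $\limsup_{\gamma\to\infty}J_\gamma(u_\gamma)\le J(u)$.

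To obtain convergence of minimisers I would appeal to the fundamental theorem of $\Gamma$-convergence \cite{dalmasoGamma}, which requires, in addition to $\Gamma$-convergence, equicoercivity of the family $\{J_\gamma\}$. This is essentially the content of the preceding Corollary: since $|Du|_\gamma(\Omega)\ge |Du|(\Omega)-\tfrac{|\Omega|}{2\gamma}$ and $\Phi_2\ge 0$, any sublevel set $\{J_\gamma\le t\}$ is bounded in $BV(\Omega)$ by a constant independent of $\gamma$ (one reuses the $BV$-coercivity arguments of Theorem \ref{wellposednessBV} in each of the two cases, the $O(1/\gamma)$ correction being harmless), hence relatively compact in $L^1(\Omega)$. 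Equicoercivity together with $\Gamma$-convergence gives that any $L^1$-cluster point of a family of minimisers $u_\gamma$ of $J_\gamma$ is a minimiser of $J$; since $J$ is strictly convex (Theorem \ref{wellposednessBV}), its minimiser is unique, so the whole family $u_\gamma$ converges to it in $L^1(\Omega)$.

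I expect no serious obstacle here: the crux is the observation that the $\gamma$-dependence is confined to the regulariser and that the Huber approximation converges to $|\cdot|$ uniformly with an explicit $O(1/\gamma)$ rate, which trivialises both the $\liminf$ and $\limsup$ inequalities. The only points demanding care are (i) ensuring that the lower semicontinuity of $J$ invoked in the $\liminf$ step is genuinely the strong $L^1$ one proved for Theorem \ref{wellposednessBV}, and (ii) making the equicoercivity \emph{uniform} in $\gamma$ rather than for a single functional, which is secured by the $\gamma$-independent lower bound on $|Du|_\gamma(\Omega)$.
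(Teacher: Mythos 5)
Your proposal is correct, and it reaches the conclusion by a route that is close to, but not identical with, the paper's. The paper's proof is shorter: it observes that $\gamma\mapsto|z|_\gamma$ is monotonically increasing with pointwise limit $|z|$, so that $J_\gamma$ increases pointwise to $J$, and then invokes the general result that an increasing sequence of (lower semicontinuous) functionals $\Gamma$-converges to its pointwise supremum \cite[Remark 5.5]{dalmasoGamma}; the convergence of minimisers is then simply asserted. You instead exploit the quantitative estimate $0\le|z|-|z|_\gamma\le\tfrac{1}{2\gamma}$ to get \emph{uniform} convergence $|J_\gamma(u)-J(u)|\le\tfrac{|\Omega|}{2\gamma}$, verify the $\liminf$ and $\limsup$ inequalities by hand (using the constant recovery sequence and the $L^1$-lower semicontinuity of $J$ established inside the proof of Theorem \ref{wellposednessBV}), and then supply the equicoercivity argument needed to legitimately apply the fundamental theorem of $\Gamma$-convergence for the convergence of minimisers. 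What your version buys is an explicit $O(1/\gamma)$ rate and a self-contained treatment of the minimiser-convergence claim, including uniqueness of the limit via strict convexity of $J$, which the paper leaves implicit; what the paper's version buys is brevity, at the cost of leaving the lower semicontinuity of the limit functional and the equicoercivity unstated. Note that both arguments ultimately rest on the same elementary properties of the Huber function, and both share the same mild caveat: the $\liminf$ step (respectively, the identification of the $\Gamma$-limit with $J$ rather than its relaxation) requires the lower semicontinuity of $J$ along $L^1$-convergent sequences, which in the Gaussian--Poisson case the paper only establishes along the particular minimising sequence of Theorem \ref{wellposednessBV}; your proof is no worse off than the paper's on this point, but it would be worth flagging that you are using lower semicontinuity of the data term for arbitrary $L^1$-convergent sequences in $BV(\Omega)\cap\mathcal{A}$.
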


\begin{proof}
The proof is a standard result based on relaxation techniques for measures, see e.g. \cite{buttazzorelaxation1,buttazzorelaxation2,buttazzorelaxation3}. We observe that as $\gamma\to \infty$ the functional $J_\gamma$ converges pointwise, for every $u\in BV(\Omega)\cap\mathcal{A}$, to
\[ 
J(u) = \int_\Omega |\nabla u|~dx + \int_\Omega |D_s u|~dx+ \lambda_1\Phi_1(v^*(u))+ \lambda_2~\Phi_2(u,f-v^*(u))
\]
since for the absolutely continuous part $\nabla u$ there holds $|\nabla u|_\gamma\to |\nabla u|$ pointwise as $\gamma\to\infty$. Since the convergence is monotonically increasing, we have that $J_\gamma$ $\Gamma$-converges to $J$ (\cite[Remark 5.5]{dalmasoGamma}). 
\end{proof}

Therefore, thanks to the results above, the general infimal convolution model \eqref{infconvproblemu}-\eqref{infconvproblem2} is well-posed in both cases described in Section \ref{subsec:gaussimp} and \ref{subsec:gausspoiss}. For simplicity,  we will focus in the following on an equivalent formulation of the nonsmooth problem \eqref{minimisprobuoptv2} which reads:
\begin{equation}   \label{jointminimisation}
\min_{\substack{u\in BV(\Omega)\cap\mathcal{A}\\
v\in L^2(\Omega)\cap\mathcal{B}}} \left\{ J(u,v):=|Du|(\Omega) + \lambda_1~\Phi_1(v) + \lambda_2~ \Phi_2(u,f-v) \right\},
\end{equation}
where the two components of the model, i.e. the reconstructed image $u$ and the noise component $v$ are treated jointly. Analogously, in Section \ref{sec:numres} we will consider the corresponding Huber-regularised version
\begin{equation*}
\min_{\substack{u\in BV(\Omega)\cap\mathcal{A}\\
v\in L^2(\Omega)\cap\mathcal{B}}} \left\{ J_\gamma(u,v):=|Du|_\gamma(\Omega) + \lambda_1~\Phi_1(v) + \lambda_2~ \Phi_2(u,f-v) \right\},
\end{equation*}
and use it for the design of efficient gradient-based numerical schemes.

\section{Recovery of the single noise models} \label{sec:asympt}

The TV-IC model \eqref{infconvproblemu}-\eqref{infconvproblem2} (or equivalently \eqref{jointminimisation}) combines data fidelities classically used in the literature for single-noise models to deal with the combined case. The reader may ask whether and under which conditions the single noise models can be recovered from our model. In this section we show that this is possible by looking at the behaviour of solutions $(u^*, v^*)$ of \eqref{jointminimisation} as the parameters $\lambda_1$ and $\lambda_2$ become infinitely large. Similarly as we have done so far, in the following analysis we discuss the Gaussian- salt \& pepper (see Section \ref{subsec:gaussimp}) and the Gaussian-Poisson case (see Section \ref{subsec:gausspoiss}) separately for more clarity.

\subsection{The Gaussian-salt \& pepper case}   \label{subsec:asymptgaussimp}

The following proposition asserts essentially that TV-$L^2$ \cite{rudinosherfatemi} and TV-$L^1$ \cite{nikolovaoutremov,duvaltvL1} -type models can be recovered `asymptotically' from \eqref{jointminimisation} by letting the $L^1$/$L^2$ fidelity weight become infinitely large, respectively. Moreover, when both parameters become infinitely large, a full recovery of the data is obtained. The proof is based on standard energy estimates for the minimisation problem \eqref{jointminimisation} in the case when the variational fidelity is chosen as in \eqref{impgaussinfconvfidelity}.

\begin{proposition}    \label{prop:impgaussasympt}
Let $(u^*,v^*)\in BV(\Omega)\times L^2(\Omega)$ the optimal pair for \eqref{jointminimisation} in the Gaussian-salt \& pepper case described in Section \ref{subsec:gaussimp}. If $f\in L^2(\Omega)$ is not identically zero, then the following asymptotical convergences hold:

\begin{enumerate}
\item[i)] If $\lambda_2$ is finite, then $v^*\to 0$ in $L^1(\Omega)$ as $\lambda_1\to+\infty$.
\item[ii)] If $\lambda_1$ is finite, then $v^*\to f-u^*$ in $L^2(\Omega)$ as $\lambda_2\to+\infty$.
\item[iii)] If, additionally, $f\in BV(\Omega)$ and is not a constant, then the pair $(u^*,v^*)$ converges to $(f,0)$ in $L^1(\Omega)\times L^1(\Omega)$ as $\lambda_1,\lambda_2\to+\infty$. 
\end{enumerate} 
\end{proposition}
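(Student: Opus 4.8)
The plan is to prove the three asymptotic statements via energy-comparison estimates, exploiting that $(u^*,v^*)$ is optimal for the joint functional $J(u,v)=|Du|(\Omega)+\lambda_1\|v\|_{L^1(\Omega)}+\frac{\lambda_2}{2}\|f-u-v\|_{L^2(\Omega)}^2$. For each case I would compare the optimal energy $J(u^*,v^*)$ against the energy at a carefully chosen competitor whose value is finite and independent of the parameter going to infinity; since $(u^*,v^*)$ is a minimiser, the optimal energy is bounded above by the competitor's energy, and isolating the term weighted by the large parameter forces that term to vanish in the limit.

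For part (i), with $\lambda_2$ finite and $\lambda_1\to+\infty$, I would use the competitor $(u,v)=(f,0)$ if $f\in BV(\Omega)$, or more robustly compare against any fixed admissible pair to get $\lambda_1\|v^*\|_{L^1(\Omega)}\le J(u^*,v^*)\le C$ with $C$ independent of $\lambda_1$. Dividing by $\lambda_1$ gives $\|v^*\|_{L^1(\Omega)}\le C/\lambda_1\to 0$, hence $v^*\to 0$ in $L^1(\Omega)$. For part (ii), with $\lambda_1$ finite and $\lambda_2\to+\infty$, the same comparison yields $\frac{\lambda_2}{2}\|f-u^*-v^*\|_{L^2(\Omega)}^2\le C$ with $C$ independent of $\lambda_2$, so $\|f-u^*-v^*\|_{L^2(\Omega)}^2\le 2C/\lambda_2\to 0$, i.e. $v^*\to f-u^*$ in $L^2(\Omega)$. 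The care needed here is to choose a competitor making $C$ genuinely $\lambda_2$-independent; taking $v=f-u$ for a fixed $u\in BV(\Omega)\cap L^2(\Omega)$ kills the $L^2$ term entirely and leaves an energy depending only on $\lambda_1$ and the fixed data.

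For part (iii), with both $\lambda_1,\lambda_2\to+\infty$ and $f\in BV(\Omega)$ non-constant, I would first use the competitor $(u,v)=(f,0)$, which gives $J(u^*,v^*)\le |Df|(\Omega)=:C$, a finite constant independent of both parameters. From this single bound I extract simultaneously that $\lambda_1\|v^*\|_{L^1(\Omega)}\le C$ and $\frac{\lambda_2}{2}\|f-u^*-v^*\|_{L^2(\Omega)}^2\le C$, so both $v^*\to 0$ in $L^1(\Omega)$ and $u^*+v^*\to f$ in $L^2(\Omega)$. Combining these, together with the uniform bound $|Du^*|(\Omega)\le C$, I would extract a $BV$-weak-$*$ convergent subsequence of $\{u^*\}$ with $L^1$ limit $\bar u$; passing to the limit in $u^*+v^*\to f$ and $v^*\to 0$ identifies $\bar u=f$, and lower semicontinuity of the TV plus $|Du^*|(\Omega)\le|Df|(\Omega)$ pins down the limit energy, giving convergence of the whole pair $(u^*,v^*)\to(f,0)$ in $L^1(\Omega)\times L^1(\Omega)$ (the $L^1$-convergence of $v^*$ being immediate and that of $u^*$ following from $u^*\to f$ in $L^1$).

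I expect the main obstacle to lie in part (iii), specifically in upgrading the $L^2$-convergence $u^*+v^*\to f$ and the $L^1$-convergence $v^*\to 0$ into $L^1$-convergence $u^*\to f$ with correct identification of the limit, rather than merely convergence along a subsequence. The non-constancy hypothesis on $f$ is what prevents the degenerate situation where the TV term vanishes and the limit is ambiguous; I would use it to ensure the minimiser is nontrivial and that the energy-comparison with $(f,0)$ is tight. Establishing that the \emph{full} sequence converges (not just a subsequence) requires invoking the uniqueness of the limit $(f,0)$: since every subsequence admits a further subsequence converging to the same limit $(f,0)$, a standard subsequence argument promotes this to convergence of the entire family as $\lambda_1,\lambda_2\to+\infty$.
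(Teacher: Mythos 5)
Your proposal is correct and follows essentially the same route as the paper: both compare the optimal energy against the competitors $(0,0)$ (or, more generally, a fixed pair with vanishing $v$-component) for \textit{i)}, $(0,f)$ for \textit{ii)}, and $(f,0)$ for \textit{iii)}, and isolate the term carrying the large weight. The only cosmetic difference is in \textit{iii)}, where the paper avoids your $BV$-compactness and subsequence extraction by noting directly that $\|f-u^*\|_{L^1(\Omega)}\leq |\Omega|^{1/2}\|f-u^*-v^*\|_{L^2(\Omega)}+\|v^*\|_{L^1(\Omega)}\to 0$, so the whole family converges without passing to subsequences.
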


\begin{proof}
We start writing the variational inequality satisfied by the optimal pair $(u^*,v^*)\in BV(\Omega)\times L^2(\Omega)$ in this case. This reads:

\begin{multline}   \label{impgauss:variationineq}
|Du^*|(\Omega) + \lambda_1~\| v^* \|_{L^1(\Omega)}  + \frac{\lambda_2}{2}~ \| f-u^*-v^*\|_{L^2(\Omega)}^2 \leq \\ |Du|(\Omega) + \lambda_1~\| v \|_{L^1(\Omega)}  + \frac{\lambda_2}{2}~ \| f-u-v\|_{L^2(\Omega)}^2,\quad\text{for all }u\in BV(\Omega),~ v\in L^2(\Omega).
\end{multline}
For the proof of \textit{i)} we choose in \eqref{impgauss:variationineq} $u=v=0$. We have:
\begin{equation*}
 \lambda_1~\| v^* \|_{L^1(\Omega)} \leq |Du^*|(\Omega) + \lambda_1~\| v^* \|_{L^1(\Omega)}  + \frac{\lambda_2}{2}~ \| f-u^*-v^*\|_{L^2(\Omega)}^2 \leq \frac{\lambda_2}{2}~ \| f\|_{L^2(\Omega)}^2,
\end{equation*}
which implies:
\begin{equation*}
\| v^* \|_{L^1(\Omega)} \leq \frac{\lambda_2}{2\lambda_1} \| f\|_{L^2(\Omega)}^2.
\end{equation*}
Since $\lambda_2$ is positive and finite, by letting $\lambda_1$ go to infinity we have that $v^*$ converges to $0$ in $L^1(\Omega)$.

Similarly, for \textit{ii)} we choose $u=0$ and $v=f$ in \eqref{impgauss:variationineq} and get:
\begin{equation*}
\| f-u^*-v^* \|_{L^2(\Omega)}^2  \leq \frac{2\lambda_1}{\lambda_2}~ \| f\|_{L^1(\Omega)}.
\end{equation*}
The conclusion follows analogously by letting $\lambda_2$ go to infinity.

For the proof of \textit{iii)} we choose in \eqref{impgauss:variationineq} $u=f\in BV(\Omega)$ and $v=0$. Since $f$ is not a constant, $|Df|(\Omega)\neq 0$. We have:
\begin{multline}
C_{\lambda_1,\lambda_2}\left(\| v^* \|_{L^1(\Omega)} + \| f-u^*-v^*\|_{L^2(\Omega)}^2\right) \notag \\ \leq |Du^*|(\Omega) + \lambda_1~\| v^* \|_{L^1(\Omega)}  + \frac{\lambda_2}{2}~ \| f-u^*-v^*\|_{L^2(\Omega)}^2 \leq \frac{\lambda_2}{2}~ |Df|(\Omega), \notag
\end{multline}
where $C_{\lambda_1,\lambda_2}:=\min\left\{\lambda_1,\frac{\lambda_2}{2}\right\}$. We have:
\begin{equation*}
\| v^* \|_{L^1(\Omega)} + \| f-u^*-v^*\|_{L^2(\Omega)}^2 \leq \frac{1}{C_{\lambda_1,\lambda_2}}|Df|(\Omega).
\end{equation*}
The right hand side of the inequality above goes to zero as $\lambda_1,\lambda_2$ tend to $+ \infty$ since in this case $C_{\lambda_1,\lambda_2}$ goes to infinity. Thus, as $\lambda_1,\lambda_2\to + \infty$  we have the following convergences:
\begin{align*}
&v^*\to f-u^*\quad\text{in } L^2(\Omega), \\
&v^*\to 0\quad\qquad~\text{in } L^1(\Omega).
\end{align*}
By uniqueness of the limit in $L^1(\Omega)$ we conclude that $f-u^*=0$ and, consequently \textit{iii)} holds.
\end{proof}

Using the same energy estimates it is also possible to show that similar convergence results hold whenever one of the two fidelity weights goes to zero.

\begin{corollary}  \label{corollary:convergence}
Let $(u^*,v^*)\in BV(\Omega)\times L^2(\Omega)$ the optimal pair for \eqref{jointminimisation} in the Gaussian-salt \& pepper case described in Section \ref{subsec:gaussimp}. If $f\in L^2(\Omega)$ is not identically zero, then:
\begin{enumerate}
\item If $\lambda_1$ is finite, then $|Du^*|(\Omega)\to 0$ and $v^*\to 0$ in $L^1(\Omega)$ as $\lambda_2\to 0$.
\item If $\lambda_2$ is finite, then $|Du^*|(\Omega)\to 0$ and $v^*\to f-u^*$ in $L^2(\Omega)$ as $\lambda_1\to 0$. 
\end{enumerate}
\end{corollary}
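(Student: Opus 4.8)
The plan is to proceed exactly as in the proof of Proposition~\ref{prop:impgaussasympt}, exploiting the variational inequality \eqref{impgauss:variationineq} satisfied by the optimal pair $(u^*,v^*)$ with a judicious choice of admissible competitors $(u,v)$ in each of the two regimes. In both cases the strategy is to pick competitors that make the right-hand side of \eqref{impgauss:variationineq} vanish in the relevant limit, so that the nonnegativity of each term on the left forces the desired convergences simultaneously.

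For the first statement, where $\lambda_1$ is kept finite and $\lambda_2 \to 0$, I would test \eqref{impgauss:variationineq} with $u = 0$ and $v = 0$. This yields
\begin{equation*}
|Du^*|(\Omega) + \lambda_1 \| v^* \|_{L^1(\Omega)} + \frac{\lambda_2}{2} \| f - u^* - v^* \|_{L^2(\Omega)}^2 \leq \frac{\lambda_2}{2} \| f \|_{L^2(\Omega)}^2.
\end{equation*}
Since every term on the left is nonnegative, I can read off separately $|Du^*|(\Omega) \leq \frac{\lambda_2}{2}\| f \|_{L^2(\Omega)}^2$ and $\| v^* \|_{L^1(\Omega)} \leq \frac{\lambda_2}{2\lambda_1}\| f \|_{L^2(\Omega)}^2$. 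Letting $\lambda_2 \to 0$ with $\lambda_1$ fixed and finite gives both $|Du^*|(\Omega) \to 0$ and $v^* \to 0$ in $L^1(\Omega)$.

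For the second statement, where $\lambda_2$ is finite and $\lambda_1 \to 0$, I would instead test \eqref{impgauss:variationineq} with $u = 0$ and $v = f$, which makes the quadratic term on the right vanish and leaves
\begin{equation*}
|Du^*|(\Omega) + \lambda_1 \| v^* \|_{L^1(\Omega)} + \frac{\lambda_2}{2} \| f - u^* - v^* \|_{L^2(\Omega)}^2 \leq \lambda_1 \| f \|_{L^1(\Omega)}.
\end{equation*}
Again using nonnegativity term by term, I obtain $|Du^*|(\Omega) \leq \lambda_1 \| f \|_{L^1(\Omega)}$ and $\| f - u^* - v^* \|_{L^2(\Omega)}^2 \leq \frac{2\lambda_1}{\lambda_2}\| f \|_{L^1(\Omega)}$. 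As $\lambda_1 \to 0$ with $\lambda_2$ fixed and finite, the first bound gives $|Du^*|(\Omega) \to 0$ and the second gives $\| f - u^* - v^* \|_{L^2(\Omega)} \to 0$, that is $v^* \to f - u^*$ in $L^2(\Omega)$.

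The computations are essentially immediate once the competitors are chosen, so I do not expect a genuine obstacle. The only point requiring care is the interpretation of the second limit: since $(u^*,v^*)$ depends on $\lambda_1$, the statement $v^* \to f - u^*$ means that the $L^2$-distance $\| v^* - (f - u^*) \|_{L^2(\Omega)}$ tends to zero, not that $v^*$ converges to a fixed limit. I would also note that the hypothesis $f \not\equiv 0$ serves only to make the conclusions non-degenerate; the estimates above hold verbatim regardless, and no compactness or lower semicontinuity argument is needed here because all the required information is extracted directly from the single inequality \eqref{impgauss:variationineq}.
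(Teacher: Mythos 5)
Your proof is correct and follows exactly the route the paper intends: it reuses the variational inequality \eqref{impgauss:variationineq} with the same competitors $(u,v)=(0,0)$ and $(u,v)=(0,f)$ as in parts \textit{i)} and \textit{ii)} of Proposition \ref{prop:impgaussasympt}, and simply lets the parameters go to zero instead of to infinity, additionally reading off the bound on $|Du^*|(\Omega)$ from the same inequality. No discrepancy with the paper's argument.
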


\begin{proof}
The proof is similar to the points \textit{i)} and \textit{ii)} of the Proposition \ref{prop:impgaussasympt}. Starting from the variational inequality \eqref{impgauss:variationineq} one in fact gets similar estimates as above and the conclusion simply follows by letting the parameters go to zero.
\end{proof}

\subsection{The Gaussian-Poisson case} \label{subsec:asymptpoissgauss}

In the Gaussian-Poisson framework described in Section \ref{subsec:gausspoiss} similar results can be proved. They rely on analogous energy estimates and, essentially, on the estimate \eqref{KLestimate} for the KL fidelity term \eqref{fullKL} recalled in Appendix \ref{append:KLfunct}. Analogously as before, by letting the Gaussian and Poisson weights go to infinity, solutions of the TV-KL \cite{alexTVpoisson,lechartrandTVpoisson} and the TV-$L^2$ \cite{rudinosherfatemi} model can be recovered as well as the full data $f$.

\begin{proposition}    \label{prop:gausspoissasympt}
Let $\mathcal{A}$ and $\mathcal{B}$ be the admissible sets in \eqref{admissiblesetsgausspoiss}. Let $(u^*,v^*)\in (BV(\Omega)\cap\mathcal{A}) \times (L^2(\Omega)\cap\mathcal{B})$ the optimal pair for \eqref{jointminimisation} in the Gaussian-Poisson case  described in Section \ref{subsec:gausspoiss}. If $f\in L^\infty(\Omega)$ is not identically zero, then the following asymptotical convergences hold:

\begin{enumerate}
\item[i)] If $\lambda_2$ is finite and $f$ is not identically equal to one, then $v^*\to 0$ in $L^2(\Omega)$ as $\lambda_1\to+\infty$.
\item[ii)] If $\lambda_1$ is finite, then $v^*\to f-u^*$ in $L^1(\Omega)$ as $\lambda_2\to+\infty$.
\item[iii)] If additionally $f\in BV(\Omega)$ and is not a constant, then the pair $(u^*,v^*)$ converges to $(f,0)$ in $L^1(\Omega)\times L^2(\Omega)$ as $\lambda_1,\lambda_2\to+\infty$. 
\end{enumerate} 
\end{proposition}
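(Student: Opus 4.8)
The plan is to reproduce the energy-estimate strategy of Proposition \ref{prop:impgaussasympt}, now for the Gaussian--Poisson fidelity \eqref{gausspoissinfconvfidelity}. First I would record the variational inequality satisfied by the optimal pair $(u^*,v^*)$ of \eqref{jointminimisation}: for every admissible test pair $(u,v)\in(BV(\Omega)\cap\mathcal A)\times(L^2(\Omega)\cap\mathcal B)$,
\[
|Du^*|(\Omega)+\frac{\lambda_1}{2}\|v^*\|_{L^2(\Omega)}^2+\lambda_2\,D_{KL}(f-v^*,u^*)
\le |Du|(\Omega)+\frac{\lambda_1}{2}\|v\|_{L^2(\Omega)}^2+\lambda_2\,D_{KL}(f-v,u).
\]
Each of the three claims then follows by inserting a well-chosen constant test pair and discarding the nonnegative terms $|Du^*|(\Omega)$ and $D_{KL}(\cdot,\cdot)\ge 0$ (Appendix \ref{append:KLfunct}) on the left. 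The one genuinely new ingredient with respect to the salt \& pepper case is that $L^1$-convergence of the residual $f-v^*-u^*$ must be recovered from the smallness of the \emph{KL} functional rather than from an $L^2$ norm; for this I would invoke the Csisz\'ar--Kullback--Pinsker-type bound \eqref{KLestimate} recalled in Appendix \ref{append:KLfunct}.

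For \emph{i)}, since $f$ is only assumed to lie in $L^\infty(\Omega)$ (not $BV$), I cannot test against $f$ and instead use the constant pair $(\bar u,\bar v)=(\mathbf{1},0)$, which is admissible because $\log\mathbf{1}=0$ gives $\mathbf{1}\in\mathcal A$ and $0\in\mathcal B$ for the nonnegative Poisson data $f\ge 0$. Dropping the left-hand nonnegative terms leaves $\frac{\lambda_1}{2}\|v^*\|_{L^2(\Omega)}^2\le\lambda_2\,D_{KL}(f,\mathbf{1})$, with $D_{KL}(f,\mathbf{1})=\int_\Omega(1-f+f\log f)\,d\mu<\infty$ a fixed constant; holding $\lambda_2$ fixed and using $f\not\equiv 1$ to rule out the degenerate case where $(\mathbf{1},0)$ is already optimal, this forces $v^*\to 0$ in $L^2(\Omega)$ as $\lambda_1\to+\infty$. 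For \emph{ii)}, I would test against $(\mathbf{1},f-\mathbf{1})$, engineered so that the residual $f-(f-\mathbf{1})=\mathbf{1}$ coincides with the test image and annihilates the KL term, $D_{KL}(\mathbf{1},\mathbf{1})=0$; this yields $\lambda_2\,D_{KL}(f-v^*,u^*)\le\frac{\lambda_1}{2}\|f-\mathbf{1}\|_{L^2(\Omega)}^2$, so $D_{KL}(f-v^*,u^*)\to 0$ as $\lambda_2\to+\infty$, and \eqref{KLestimate} then gives $\|f-v^*-u^*\|_{L^1(\Omega)}\to 0$, i.e. $v^*\to f-u^*$ in $L^1(\Omega)$.

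For \emph{iii)} the additional hypothesis $f\in BV(\Omega)$ (and, as is natural for Poisson data, $f\in\mathcal A$) makes the pair $(\bar u,\bar v)=(f,0)$ an admissible test; because $D_{KL}(f,f)=0$ its energy is the single parameter-independent constant $|Df|(\Omega)$. Splitting the resulting bound gives simultaneously $\|v^*\|_{L^2(\Omega)}^2\le\frac{2}{\lambda_1}|Df|(\Omega)$ and $D_{KL}(f-v^*,u^*)\le\frac{1}{\lambda_2}|Df|(\Omega)$. Sending $\lambda_1,\lambda_2\to+\infty$, the first forces $v^*\to 0$ in $L^2(\Omega)$ (hence in $L^1(\Omega)$ on the bounded domain), while the second together with \eqref{KLestimate} forces $\|f-v^*-u^*\|_{L^1(\Omega)}\to 0$; a triangle inequality $\|f-u^*\|_{L^1(\Omega)}\le\|f-v^*-u^*\|_{L^1(\Omega)}+\|v^*\|_{L^1(\Omega)}$ then yields $u^*\to f$ in $L^1(\Omega)$, establishing $(u^*,v^*)\to(f,0)$ in $L^1(\Omega)\times L^2(\Omega)$.

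I expect the main obstacle to be exactly this conversion of KL-smallness into $L^1$-convergence in \emph{ii)} and \emph{iii)}. In the salt \& pepper case convergence is read off directly from the quadratic $L^2$ fidelity, whereas here \eqref{KLestimate} controls $\|f-v^*-u^*\|_{L^1(\Omega)}$ only up to a constant depending on the masses $\|f-v^*\|_{L^1(\Omega)}$ and $\|u^*\|_{L^1(\Omega)}$, so I must check these remain uniformly bounded: the former from the uniform estimate \eqref{estimatevar_unif} on $v^*$ together with $f\in L^\infty(\Omega)$, the latter from the $BV$-coercivity already exploited in Theorem \ref{wellposednessBV}. A secondary, purely technical point is to keep every test pair inside the admissible sets \eqref{admissiblesetsgausspoiss} --- in particular the constraint $v\le f$ and the positivity and log-integrability required for membership in $\mathcal A$ --- which is where the nonnegativity of $f$ in \emph{i)}--\emph{ii)} and its admissibility in \emph{iii)} are actually used.
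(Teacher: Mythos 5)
Your proposal is correct and follows essentially the same route as the paper: the variational inequality \eqref{gausspoiss:variationineq} tested against $(1_\Omega,0)$ for \emph{i)} and $(f,0)$ for \emph{iii)}, with Corollary \ref{corollaryKL} converting smallness of $D_{KL}(f-v^*,u^*)$ into $L^1$-convergence of the residual. The only deviation is in \emph{ii)}, where the paper tests against $(0,f)$ while you use $(\mathbf{1},f-\mathbf{1})$; your choice is in fact the cleaner one, since $u=0$ fails the condition $\log u\in L^1(\Omega)$ required for membership in $\mathcal{A}$ as defined in \eqref{admissiblesetsgausspoiss}, and your explicit verification of the uniform $L^1$ bounds needed to invoke the Kullback--Leibler estimate is a point the paper leaves implicit.
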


\begin{proof}
Again, we start writing explicitly the variational inequality satisfied by the optimal pair $(u^*,v^*)\in (BV(\Omega)\cap\mathcal{A}) \times (L^2(\Omega)\cap\mathcal{B})$ for the Gaussian-Poisson combined case. This reads:
\begin{multline}   \label{gausspoiss:variationineq}
|Du^*|(\Omega) + \frac{\lambda_1}{2}~\| v^* \|_{L^2(\Omega)}^2  +\lambda_2~ D_{KL}(f-v^*,u^*) \\ \leq |Du|(\Omega) + \frac{\lambda_1}{2}~\| v \|_{L^2(\Omega)}^2  +\lambda_2~D_{KL}(f-v,u),
\end{multline}
for all $u\in (BV(\Omega)\cap\mathcal{A})$ and $v\in (L^2(\Omega)\cap\mathcal{B})$.

For the proof of \textit{i)} we choose in \eqref{gausspoiss:variationineq} $u=1_\Omega$, the constant function identically equal to one on $\Omega$ and $v=0$. We deduce:
\begin{equation*}
\frac{\lambda_1}{2}~\| v^* \|_{L^2(\Omega)}^2  \leq \lambda_2 D_{KL}(f,1_\Omega).
\end{equation*}
Since by assumption the function $f$ is not identically equal to one, the right hand side of the inequality above is strictly positive and bounded as:
\begin{equation*}
0 < D_{KL}(f,1_\Omega)\leq K:= \Bigl( \| f \|_{L^\infty(\Omega)} \| \log f \|_{L^1(\Omega)} +|\Omega| \Bigr)< \infty,
\end{equation*}
by H\"older inequality and assumptions on $f$. Thus, we have:
\begin{equation*}
\| v^* \|_{L^2(\Omega)}^2 \leq \frac{2\lambda_2}{\lambda_1} K, 
\end{equation*}
which implies the convergence $v^*\to 0$ in $L^2(\Omega)$ as $\lambda_1\to+\infty$. 

To prove \textit{ii)}, we consider in \eqref{gausspoiss:variationineq} $u=0$ and $v=f$. For such a choice we have $D_{KL}=0$. Hence, inequality \eqref{gausspoiss:variationineq} reduces to:
\begin{equation*}
\lambda_2~D_{KL}(f-v^*,u^*)\leq \frac{\lambda_1}{2} \| f \|_{L^2(\Omega)}^2,
\end{equation*}
which implies that $D_{KL}(f-v^*,u^*)\to 0$ as $\lambda_2\to +\infty$. Thanks to Corollary \ref{corollaryKL}, we deduce that $v^*\to f-u^*$ in $L^1(\Omega)$.

We can show that \textit{iii)}  holds by choosing $v=0$ and $u=f\in BV(\Omega)$ with $|Df|(\Omega)\neq 0$ by assumption. Proceeding similarly as before, the convergence result follows immediately after applying once again Corollary \ref{corollaryKL}.
\end{proof}

Again, similar results of convergence hold in the case where the fidelity parameters go to zero individually.

\begin{corollary}  \label{corollary:convergence2}
Let $\mathcal{A}$ and $\mathcal{B}$ be the admissible sets in \eqref{admissiblesetsgausspoiss}. Let $(u^*,v^*)\in (BV(\Omega)\cap\mathcal{A}) \times (L^2(\Omega)\cap\mathcal{B})$ the optimal pair for \eqref{jointminimisation} in the Gaussian-Poisson case \ref{subsec:gausspoiss}. If $f\in L^\infty(\Omega)$ is not identically zero, then:
\begin{enumerate}
\item If $\lambda_1$ is finite and $f$ is not identically equal to one, then $|Du^*|(\Omega)\to 0$ and $v^*\to 0$ in $L^2(\Omega)$ as $\lambda_2\to 0$.
\item If $\lambda_2$ is finite, then $|Du^*|(\Omega)\to 0$ and $v^*\to f-u^*$ in $L^1(\Omega)$ as $\lambda_1\to 0$. 
\end{enumerate}
\end{corollary}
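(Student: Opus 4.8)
The plan is to reproduce, in the two vanishing-parameter regimes, the energy-estimate argument of Proposition~\ref{prop:gausspoissasympt}: I would start from the variational inequality \eqref{gausspoiss:variationineq} satisfied by the optimal pair $(u^*,v^*)$ and, in each case, insert admissible competitors $u\in BV(\Omega)\cap\mathcal{A}$ and $v\in L^2(\Omega)\cap\mathcal{B}$ that render the right-hand side proportional to the parameter sent to zero. Dropping the nonnegative terms on the left then yields the claimed convergences.

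For item~1 ($\lambda_1$ fixed, $\lambda_2\to 0$) I would test \eqref{gausspoiss:variationineq} with the constant $u=1_\Omega$ and $v=0$, exactly the competitors used for item~i) of Proposition~\ref{prop:gausspoissasympt}. Since $|D1_\Omega|(\Omega)=0$ and $\|0\|_{L^2(\Omega)}=0$, the inequality reduces to
\[
|Du^*|(\Omega)+\frac{\lambda_1}{2}\|v^*\|_{L^2(\Omega)}^2+\lambda_2\,D_{KL}(f-v^*,u^*)\le \lambda_2\,D_{KL}(f,1_\Omega)\le \lambda_2 K,
\]
with $K:=\|f\|_{L^\infty(\Omega)}\|\log f\|_{L^1(\Omega)}+|\Omega|<\infty$ the finite bound already recorded in Proposition~\ref{prop:gausspoissasympt}. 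Discarding the nonnegative KL term on the left and letting $\lambda_2\to 0$ gives $|Du^*|(\Omega)\to 0$ at once, while $\|v^*\|_{L^2(\Omega)}^2\le (2\lambda_2/\lambda_1)K$ forces $v^*\to 0$ in $L^2(\Omega)$ because $\lambda_1>0$ is held fixed.

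For item~2 ($\lambda_2$ fixed, $\lambda_1\to 0$) I would instead choose $v=f$, so that $f-v\equiv 0$ and, under the convention $0\log 0=0$, the KL term on the right vanishes; taking the same competitor for $u$ as in item~ii) of Proposition~\ref{prop:gausspoissasympt} leaves
\[
|Du^*|(\Omega)+\frac{\lambda_1}{2}\|v^*\|_{L^2(\Omega)}^2+\lambda_2\,D_{KL}(f-v^*,u^*)\le \frac{\lambda_1}{2}\|f\|_{L^2(\Omega)}^2.
\]
Isolating the two nonnegative left-hand terms separately and sending $\lambda_1\to 0$ yields $|Du^*|(\Omega)\to 0$ together with $D_{KL}(f-v^*,u^*)\le (\lambda_1/2\lambda_2)\|f\|_{L^2(\Omega)}^2\to 0$. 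The remaining task is to upgrade this decay of the KL functional to $L^1$-convergence of its arguments, which I would obtain from Corollary~\ref{corollaryKL} (the Csisz\'ar--Kullback--Pinsker-type estimate of Appendix~\ref{append:KLfunct}): it gives $f-v^*\to u^*$, i.e. $v^*\to f-u^*$ in $L^1(\Omega)$.

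I do not expect a genuine obstacle, since this is a corollary of the same estimates driving Proposition~\ref{prop:gausspoissasympt}; the only points needing care are the admissibility of the competitors against the sets \eqref{admissiblesetsgausspoiss} and the correct use of Corollary~\ref{corollaryKL}. Concretely, the limiting choices $u=0$, $v=f$ of the parent proposition sit on the boundary of $\mathcal{A}$, so I would either invoke them directly under the convention $0\log 0=0$, as is done there, or approximate by $u=\varepsilon 1_\Omega\in\mathcal{A}$ and let $\varepsilon\to 0$; and I would check that $f-v^*\ge 0$ and $u^*\ge 0$ are integrable (guaranteed by $v^*\in\mathcal{B}$ and $u^*\in\mathcal{A}$) before applying the KL-to-$L^1$ estimate. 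The hypothesis that $f$ is not identically one serves, as in Proposition~\ref{prop:gausspoissasympt}, only to keep $D_{KL}(f,1_\Omega)$ finite and the statement non-degenerate.
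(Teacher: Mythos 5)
Your argument is correct and is precisely the one the paper intends: the corollary is stated without an explicit proof (in parallel with Corollary~\ref{corollary:convergence}, which is proved by the same remark), and your use of the competitors $u=1_\Omega$, $v=0$ and $u=0$ (or $\varepsilon 1_\Omega$), $v=f$ in the variational inequality \eqref{gausspoiss:variationineq}, followed by Corollary~\ref{corollaryKL}, reproduces the estimates of Proposition~\ref{prop:gausspoissasympt} with the parameter roles reversed. Your extra care about the admissibility of $u=0$ with respect to $\mathcal{A}$ and the $L^1$-boundedness needed to invoke Corollary~\ref{corollaryKL} is, if anything, slightly more scrupulous than the paper itself.
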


\section{Numerical results} \label{sec:numres}

In this section we report on the numerical realisation of the general mixed noise model \eqref{infconvproblemu}-\eqref{infconvproblem2} (in the form \eqref{jointminimisation}) for the two frameworks described in Sections \ref{subsec:gaussimp} and \ref{subsec:gausspoiss}. 

We consider a discretised image domain $\Omega=\left\{ (x_i,y_j): i=1,\ldots,N,~ j=1,\ldots,M\right\}$ with cardinality $|\Omega|=NM$. Standard finite difference discretisation schemes are used. In particular, forward and backward finite differences are considered for the discretisation of the divergence and gradient operators, respectively, thus preserving their mutual adjointness property, compare \cite{chambolle2004algorithm}.

For the numerical realisation of the model, we use a \textbf{S}emi\textbf{S}mooth \textbf{N}ewton (SSN) type algorithm with a primal-dual strategy. To do that, we regularise the nonsmooth TV term by Huber-regularisation using a parameter $\gamma\gg 1$, thus dealing with unique-gradients. Relations with the original problem are guaranteed by Theorem 
\ref{gammaconvergenceTV}.  
Other numerical approaches, such as first-order convex optimisation methods \cite{chambolle2016introduction} could alternatively be used. However, in this paper we chose the second-order SSN scheme in view of parameter learning via bilevel optimisation as outlined in Section \ref{sec:learning_mot}.
We use a combined stopping criterion which stops the iterations either when the norm of the difference between two different iterates is below a given tolerance or when a maximum number of iterations (typically, $35$) is attained.

Our numerical results confirm the property of the infimal-convolution model \eqref{jointminimisation} to capture the different noise components in the image. Indeed, we will see that \eqref{jointminimisation} allows to decompose the noise into its single noise components.

\medskip

\paragraph{Test images and parameters} For our computational tests we consider different images selected either from the Berkeley database\footnote{\url{https://www.eecs.berkeley.edu/Research/Projects/CS/vision/bsds/BSDS300/html/dataset/images.html}}, see Figure \ref{orig:images}, or from some other public availble website, see Figure \ref{orig:images2}. For each experiment, the ground truth image $u_0$ is artificially corrupted with mixed noise distributions of different intensities which are specified in each case.  For simplicity, we consider square $N\times N$ pixel images (corresponding to a step size $h=1/N$) and fix the Huber-regularisation parameter to be $\gamma=1e5$. 

\begin{figure}[h!]
\begin{center}
\includegraphics[height=2.5cm]{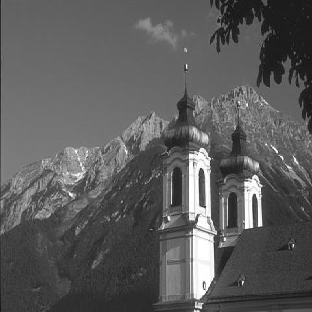}\quad 
\includegraphics[height=2.5cm]{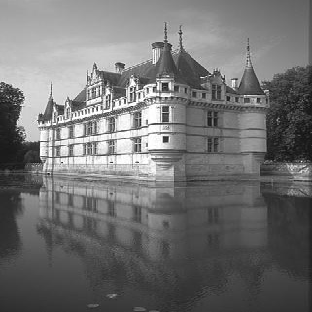}\quad 
\includegraphics[height=2.5cm]{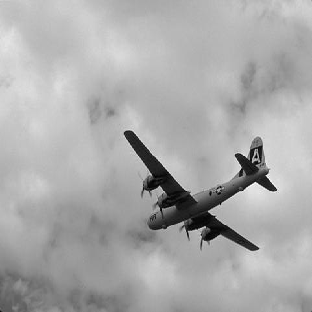}\quad 
\includegraphics[height=2.5cm]{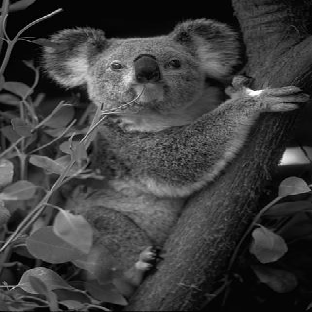}\quad
\includegraphics[height=2.5cm]{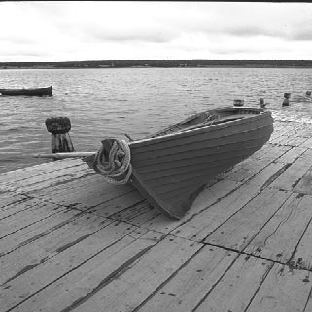}
\caption{Some images from the Berkeley database.}
\label{orig:images}
\end{center}
\end{figure}

\begin{figure}[h!]
\begin{center}
\includegraphics[height=2.5cm]{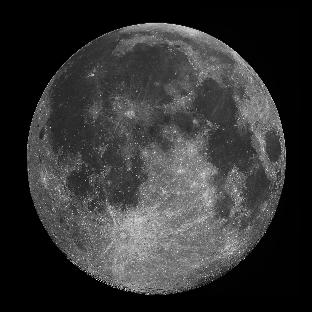}\quad 
\includegraphics[height=2.5cm]{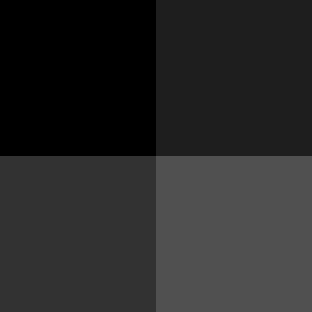}\quad 
\includegraphics[height=2.5cm]{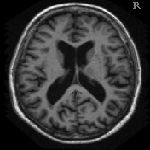}\quad
\includegraphics[height=2.5cm]{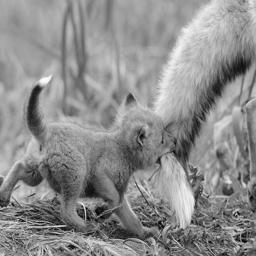}\quad
\includegraphics[height=2.5cm]{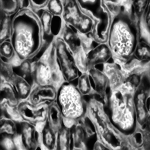}
\caption{Some additional images used for our experiments\protect\footnotemark.}
\label{orig:images2}
\end{center}
\end{figure}

\footnotetext{Moon image: \url{http://commons.wikimedia.org/wiki/File:FullMoon2010.jpg\#/media/File:FullMoon2010.jpg},\url{http://www.oasis-brains.org/}, Brain image: \url{http://meyerinst.com/confocals/tcs-spe/index.htm}}

\subsection{Gaussian-salt \& pepper case}   \label{numres:gaussimp}

We start focusing on the Gaussian-salt \& pepper model considered in Section \ref{subsec:gaussimp}. We want to compute numerically the solution pair of the following minimisation problem
\begin{equation}  \label{gaussimp:numer}
\min_{u,v} \left\{ J_\gamma(u,v):=|Du|_\gamma(\Omega) + \lambda_1~\| v \|_{\ell^1}  + \frac{\lambda_2}{2}~ \| f-u-v\|_{\ell^2}^2 \right\}.
\end{equation}
We observe that the $\ell^1$-term in \eqref{gaussimp:numer} dealing with the sparse component of the noise introduces a further nondifferentiability obstacle in the design of a numerical gradient-based optimisation method solving \eqref{gaussimp:numer}. Therefore, we Huber-regularise this term using \eqref{huberregular} as we did for the TV term and consider the following optimality conditions for the regularised problem:
\begin{equation*}
\begin{cases}
\frac{\partial J_\gamma}{\partial u}&=- \mathrm{div}\,\left(\frac{\gamma\nabla u}{\max(\gamma|\nabla u|, 1)} \right) -\lambda_2 (f-u-v)  = 0,\\
\frac{\partial J_\gamma}{\partial v}&=\lambda_1 \frac{\gamma\ v}{\max(\gamma|v|, 1)} -\lambda_2 (f-u-v) = 0.
\end{cases}
\end{equation*}
The system above can be equivalently written in primal-dual form as:
\begin{equation*}
\begin{cases}
& - \mathrm{div} q -\lambda_2 (f-u-v)  = 0,\\
& \lambda_1 p -\lambda_2 (f-u-v)  = 0, \\
& q= \left(\frac{\gamma\nabla u}{\max(\gamma|\nabla u|, 1)} \right) , \\
& p= \frac{\gamma\ v}{\max(\gamma|v|, 1)}.
\end{cases}
\end{equation*}
\noindent Starting from an appropriate initial guess for $u_0$ and $v_0$ (which for our experiments is the noisy image and a sparse vector, respectively), the SSN iteration reads:
\begin{equation*}
\left\{\begin{aligned}
&- \mathrm{div}\, \delta_q +\lambda_2 \delta_u + \lambda_2 \delta_{v} = - \left( - \mathrm{div}\, q +\lambda_2 (f-u-v) \right), \\
& \lambda_1\delta_p+\lambda_2\delta_u+\lambda_2\delta_{v} = - \left(\lambda_1p -\lambda_2 (f-u-v) \right),  \\
&\delta_q - \frac{\gamma \nabla \delta_u}{\max(1,\gamma|\nabla u|)}+ \chi_{\mathcal U_\gamma}  \gamma^2 \frac{\nabla u^T \nabla \delta_u}{\max(1,\gamma|\nabla u|)^2} \frac{q}{\max(1,|q|)} =-q+ \frac{\gamma \nabla u}{\max(1,\gamma|\nabla u|)},  \\
&\delta_p - \frac{\gamma\ \delta_{v}}{\max(1,\gamma|v|)}+ \chi_{\mathcal V_\gamma}  \gamma^2 \frac{v\ \delta_{v}}{\max(1,\gamma|v|)^2} \frac{p}{\max(1,|p|)} =-p+ \frac{\gamma\  v}{\max(1,\gamma|v|)},
\end{aligned} \right.
\end{equation*}
for the increments $\delta_u, \delta_{v}, \delta_q$ and $\delta_p$ and where the active sets $\mathcal{U}_\gamma$ and $\mathcal{V}_\gamma$ are defined as: $\mathcal{U}_\gamma:=\{ x \in \Omega: \gamma|\nabla u(x)|\geq1 \}$ and $\mathcal{V}_\gamma:=\{ x \in \Omega: \gamma|v(x)|\geq 1 \}$. As in \cite{juancarlos2012,noiselearning,lucasampling,bilevellearning}, the SSN iteration above has been modified using the properties of the solution on the final active sets $\mathcal{U}_\gamma$ and $\mathcal{V}_\gamma$. Namely, on these sets we have that $q=\frac{\nabla u}{|\nabla u|}$ and $p=\frac{v}{|v|}$ together with $q,~ p\leq 1$ a.e. in $\Omega$. The standard Newton iteration can then be modified accordingly, thus obtaining a positive definite Hessian matrix in each iteration which ensures global convergence.

\medskip

Figure \ref{result:impgauss} shows the numerical denoising results for $312\times 312$ pixel images corrupted with a combination of salt \& pepper and Gaussian noise of different intensities. The parameters $\lambda_1$ and $\lambda_2$ have been optimised experimentally with respect to the best Peak Signal to Noise Ratio (PSNR) of the denoised image $u$ in comparison with the corresponding ground truth $u_0$. In all the experiments we observe that the noise is successfully removed from the original image and it is further decomposed in its two noise components constituting salt \& pepper and Gaussian noise corresponding to the $\ell^1$ and $\ell^2$ term in \eqref{gaussimp:numer}, respectively. 
The solution pair $(u^*,v^*)$ is computed jointly, so the noise removal process can be thought of as an iterative process where in each iteration the salt \& pepper component of the noise is extracted from the noisy image and encoded in the component $v$, while the Gaussian noise component is treated with the $L^2$ fidelity of the residuum. We observe that the reconstructed images may suffer a loss of contrast resulting in image structures left in the noise components (cf. fifth column of Figure \ref{result:impgauss}). This is a well-known drawback of TV regularisation \cite{Meyer2001} and can be improved by using higher-order imaging models such as TV-TV$^2$ \cite{kostasTV2} or TGV regularisation \cite{TGV,diff_tens}, or, alternatively, it can be enhanced by solving numerically the TV problem using Bregman iteration \cite{OsherBurger2005}. 

\begin{figure}[!h]
\centering
\begin{subfigure}[b]{0.2\textwidth}
\includegraphics[height=2.9cm,width=2.9cm]{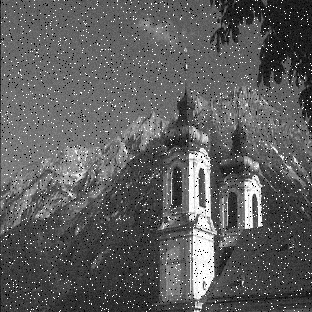}\vspace{0.1cm}
\includegraphics[height=2.9cm,width=2.9cm]{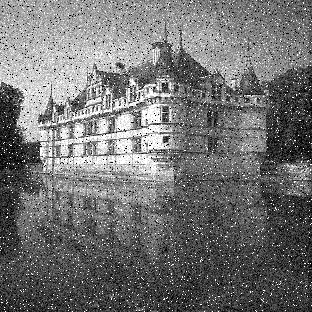}\vspace{0.1cm}
\includegraphics[height=2.9cm,width=2.9cm]{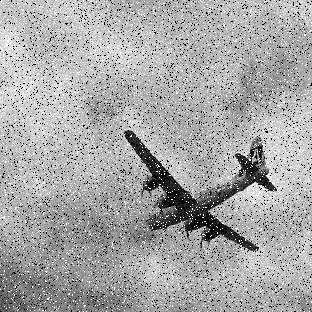}\vspace{0.1cm}
\includegraphics[height=2.9cm,width=2.9cm]{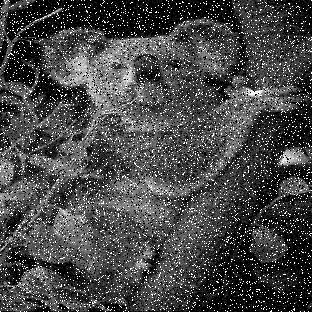}\vspace{0.1cm}
\caption{$f$}
\end{subfigure}
\hspace{-0.3cm}
\begin{subfigure}[b]{0.2\textwidth}
\includegraphics[height=2.9cm,width=2.9cm]{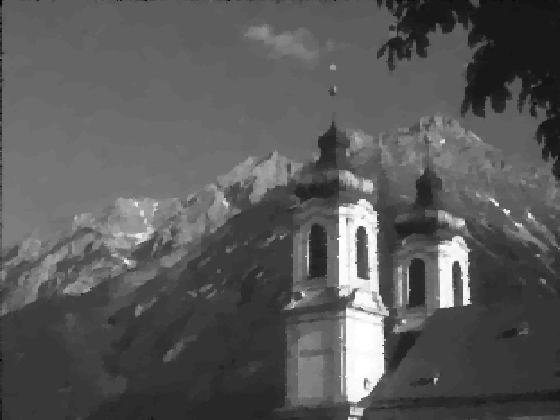}\vspace{0.1cm}
\includegraphics[height=2.9cm,width=2.9cm]{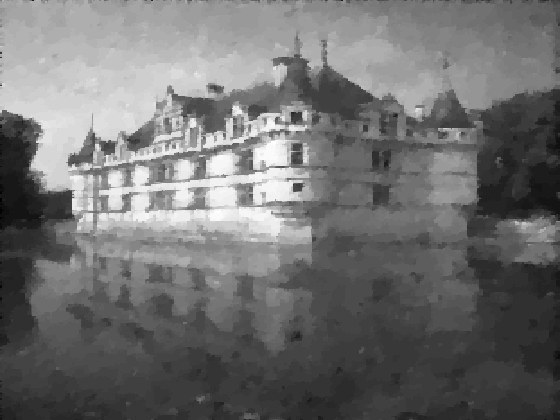}\vspace{0.1cm}
\includegraphics[height=2.9cm,width=2.9cm]{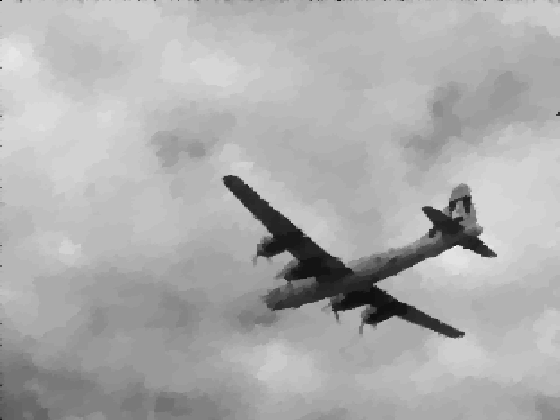}\vspace{0.1cm}
\includegraphics[height=2.9cm,width=2.9cm]{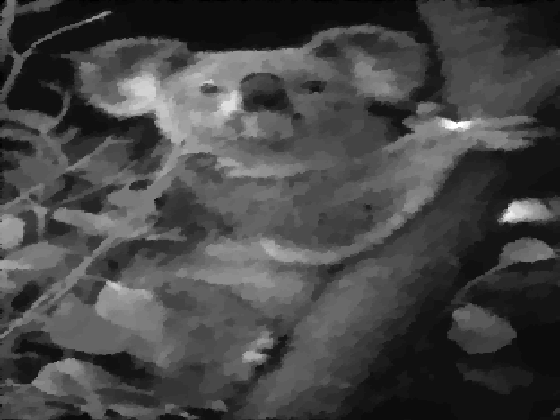}\vspace{0.1cm}
\caption{$u$}
\end{subfigure}
\hspace{-0.3cm}
\begin{subfigure}[b]{0.2\textwidth}
\includegraphics[height=2.9cm,width=2.9cm]{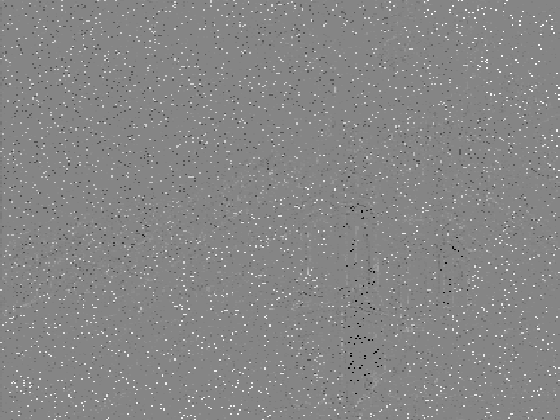}\vspace{0.1cm}
\includegraphics[height=2.9cm,width=2.9cm]{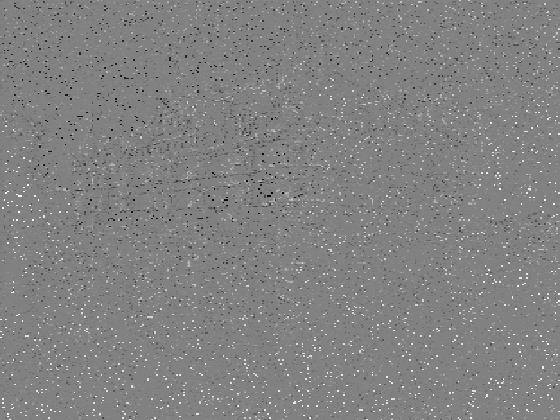}\vspace{0.1cm}
\includegraphics[height=2.9cm,width=2.9cm]{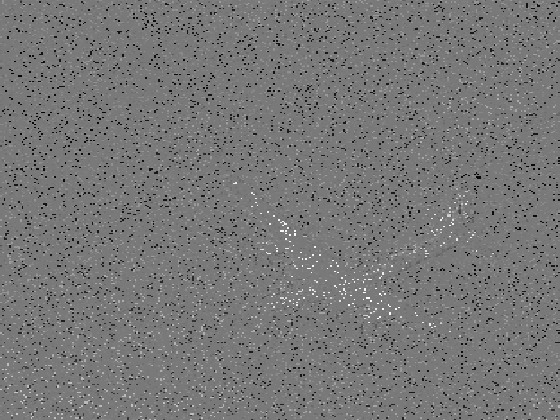}\vspace{0.1cm}
\includegraphics[height=2.9cm,width=2.9cm]{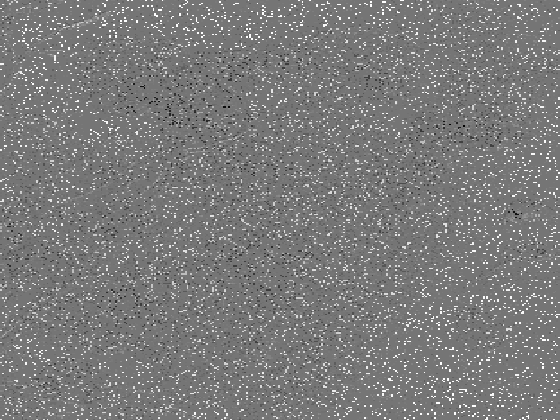}\vspace{0.1cm}
\caption{$v$}
\end{subfigure}
\hspace{-0.3cm}
\begin{subfigure}[b]{0.2\textwidth}
\includegraphics[height=2.9cm,width=2.9cm]{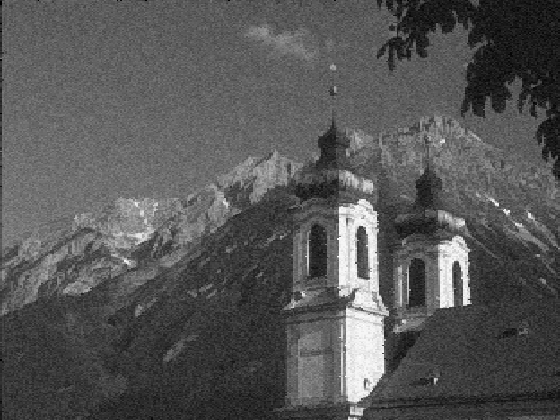}\vspace{0.1cm}
\includegraphics[height=2.9cm,width=2.9cm]{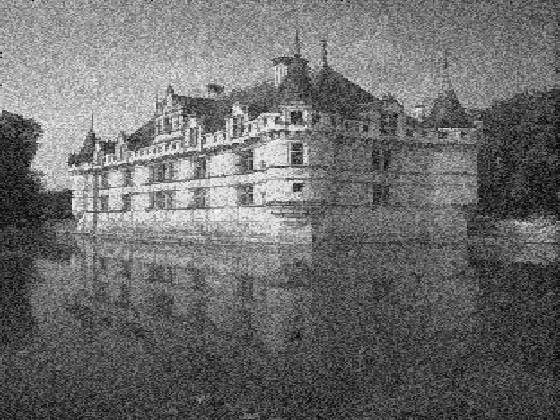}\vspace{0.1cm}
\includegraphics[height=2.9cm,width=2.9cm]{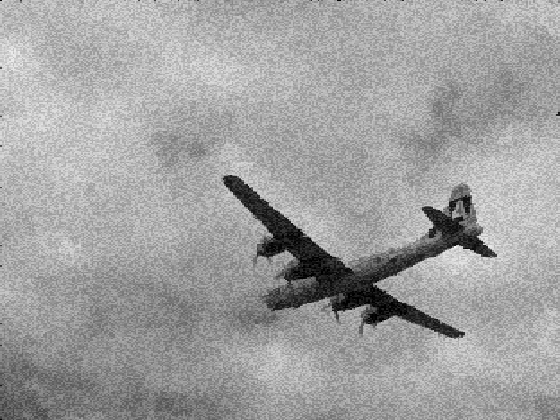}\vspace{0.1cm}
\includegraphics[height=2.9cm,width=2.9cm]{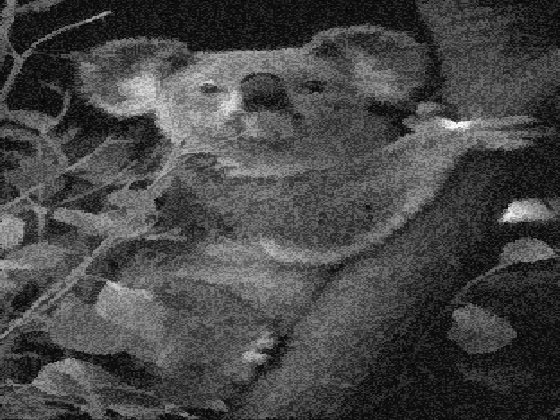}\vspace{0.1cm}
\caption{$f-v$}
\end{subfigure}
\hspace{-0.3cm}
\begin{subfigure}[b]{0.2\textwidth}
\includegraphics[height=2.9cm,width=2.9cm]{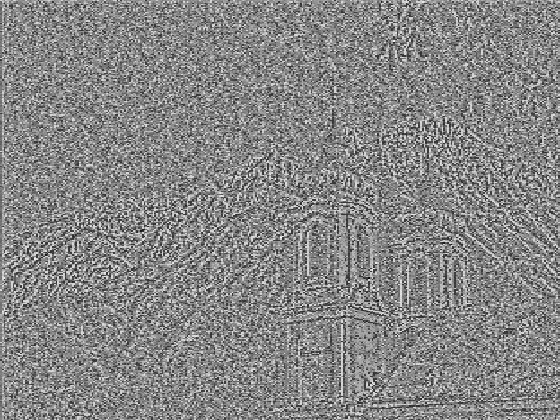}\vspace{0.1cm}
\includegraphics[height=2.9cm,width=2.9cm]{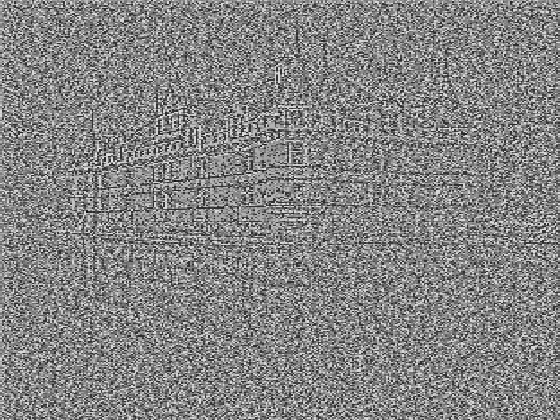}\vspace{0.1cm}
\includegraphics[height=2.9cm,width=2.9cm]{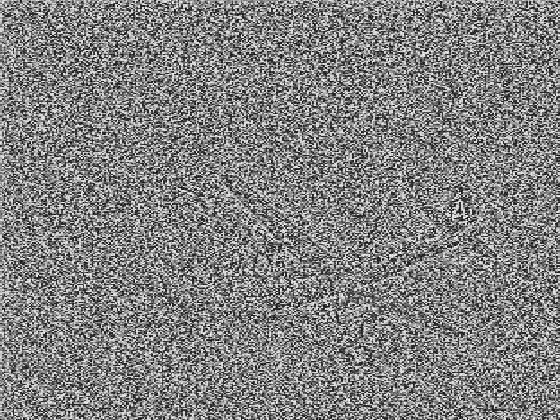}\vspace{0.1cm}
\includegraphics[height=2.9cm,width=2.9cm]{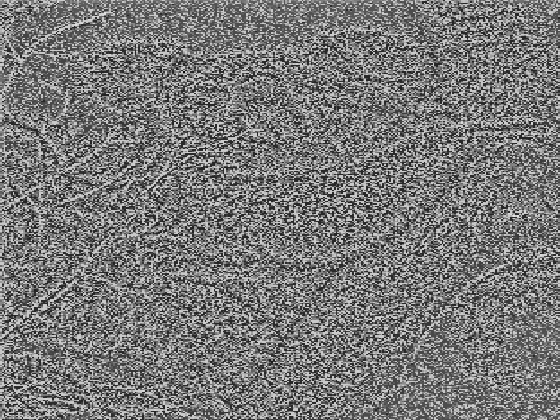}\vspace{0.1cm}
\caption{$f-v-u$}
\end{subfigure}

\caption{\textbf{First column}: Images corrupted with salt \& pepper and zero-mean Gaussian noise of different intensities. \textbf{Second column}: Denoising results. \textbf{Third column}: Salt \& pepper noise component. \textbf{Fourth column}: Gaussian noisy image residuum. \textbf{Fifth column}:  Gaussian noise component.\\
 \textbf{First row}: salt \& pepper noise density $d=5\%$, Gaussian noise variance $\sigma^2=0.001$. Noisy image PSNR=$17.91$ dB.  Denoised version PSNR=$29.03$ dB. Parameters: $\lambda_1=651$, $\lambda_2=6779$.\\  
 \textbf{Second row}: salt \& pepper noise density $d=5\%$, Gaussian noise variance $\sigma^2=0.01$. Noisy image PSNR=$16.11$ dB. Denoised version PSNR=$25.02$ dB. Parameters: $\lambda_1=622$, $\lambda_2=4201$. \\
  \textbf{Third row}: salt \& pepper noise density $d=10\%$, Gaussian noise variance $\sigma^2=0.005$. Noisy image PSNR=$14.63$ dB. Denoised version PSNR=$31.43$ dB. Parameters: $\lambda_1=523$, $\lambda_2=5551$. \\  
  \textbf{Fourth row}: salt \& pepper noise density $d=15\%$, Gaussian noise variance $\sigma^2=0.005$. Noisy image PSNR=$12.66$ dB. Denoised version PSNR=$26.40$ dB. Parameters $\lambda_1=482$, $\lambda_2=5233$. }
\label{result:impgauss}
\end{figure}

\medskip

Figure \ref{fig:asymptoticsimpgauss} confirms the convergence results of Proposition \ref{prop:impgaussasympt}, i.e. the single noise models are recovered asymptotically. The salt \& pepper and the Gaussian noise component of the model are plotted and their convergence to zero is observed as the corresponding weighting parameter goes to infinity. In particular, this behaviour corresponds to an ``asymptotical" convergence of the combined model \eqref{gaussimp:numer} to the classical TV denoising models for single noise removal (i.e. the classical ROF model \cite{rudinosherfatemi} for the  Gaussian noise case and the TV-$L^1$ model \cite{duvaltvL1} for the salt \& pepper case).

\begin{figure}[!h]
\begin{subfigure}[b]{0.45\textwidth}
\begin{center}
\includegraphics[height=4cm,width=6cm]{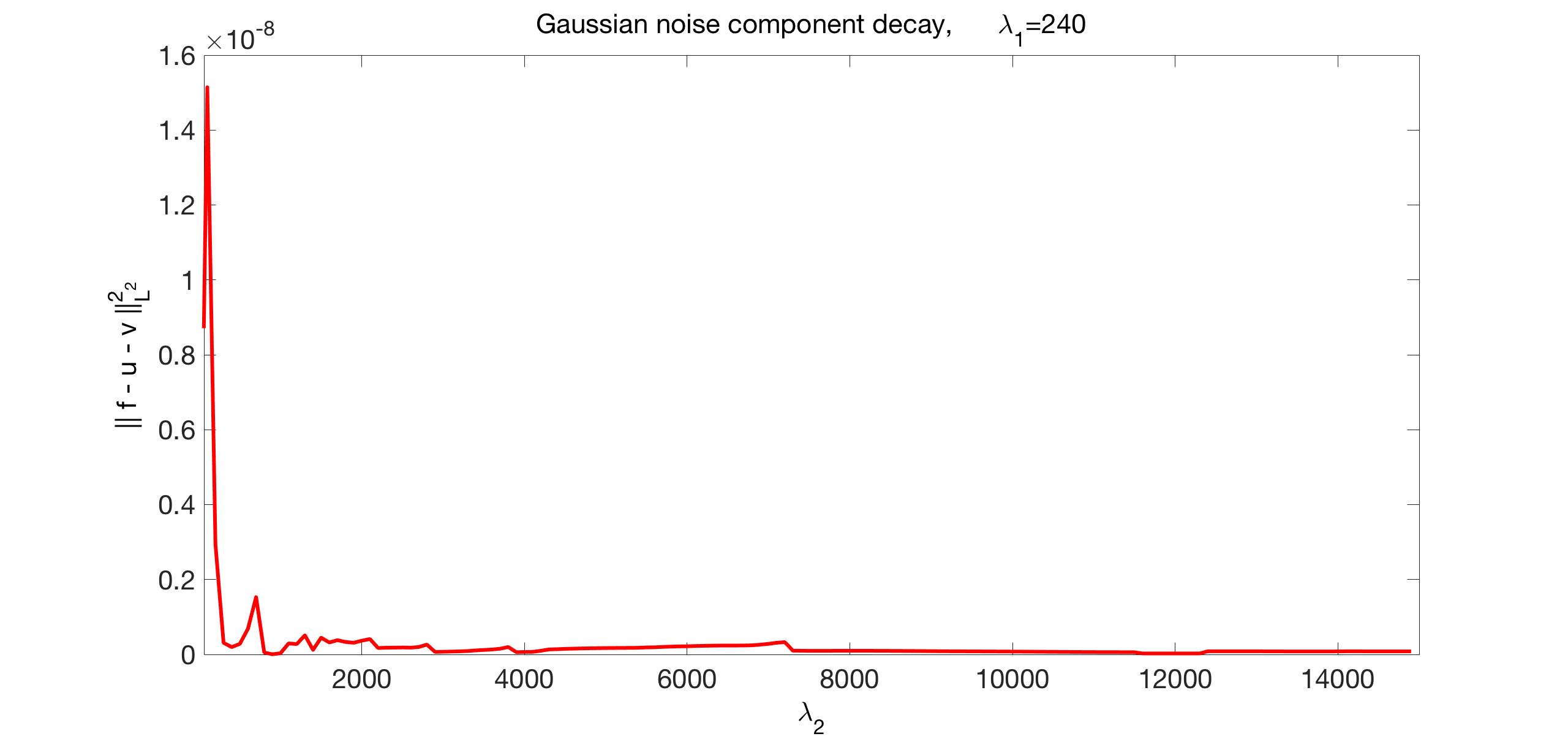}\vspace{0.1cm}
\end{center}
\caption{$\| f-u-v\|_{L^2}^2$ decay as $\lambda_2\to\infty$}
\end{subfigure}
\begin{subfigure}[b]{0.45\textwidth}
\begin{center}
\includegraphics[height=4cm,width=6cm]{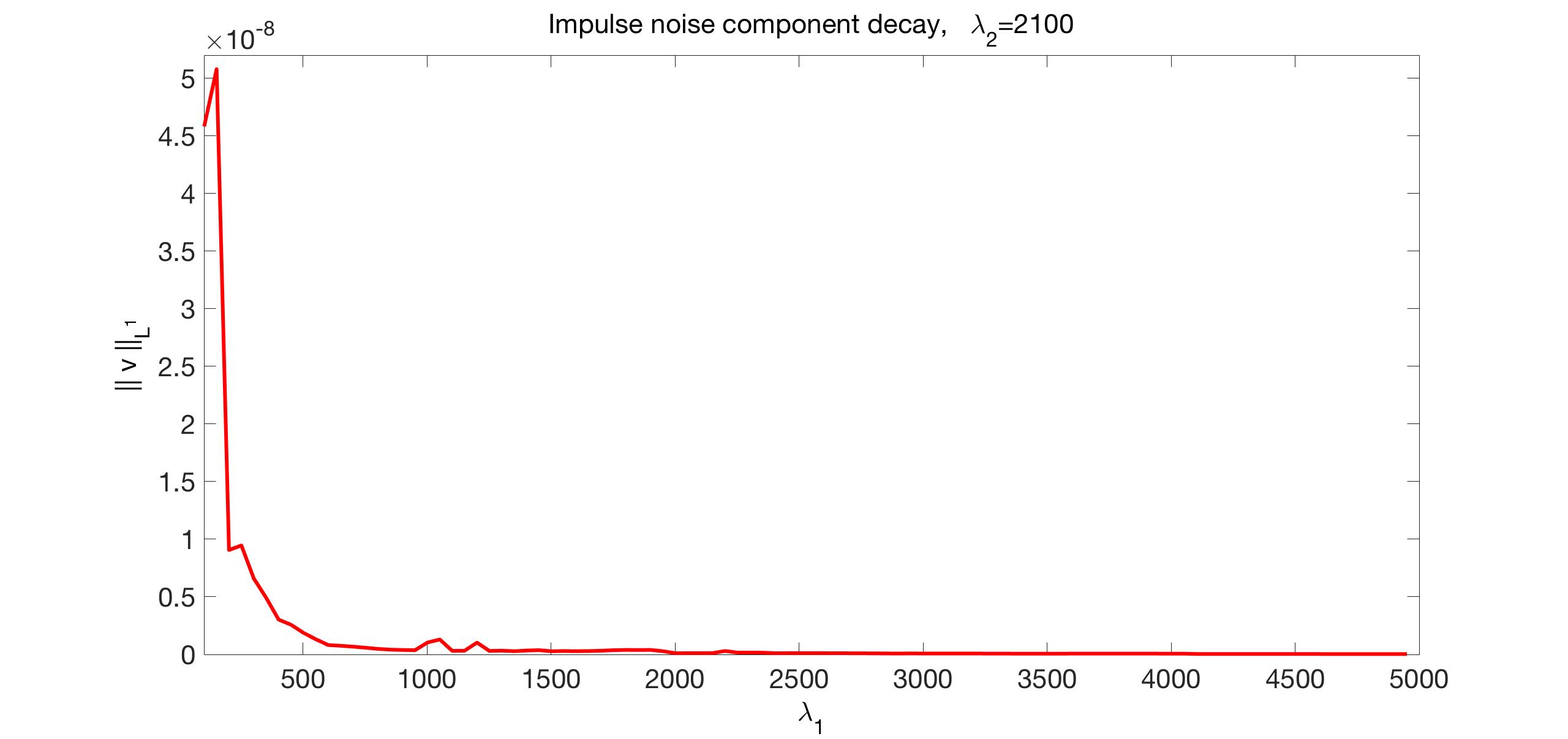}\vspace{0.1cm}
\end{center}
\caption{$\| v\|_{L^1}$ decay as $\lambda_1\to\infty$}
\end{subfigure}
\caption{Noise components behaviour as parameters $\lambda_1, \lambda_2$ of \eqref{gaussimp:numer} go to infinity.}
\label{fig:asymptoticsimpgauss}
\end{figure}

\medskip

To gain insight on the sensitivity of the reconstruction with respect to the choice of the parameters $\lambda_1$ and $\lambda_2$, we compare in Figure \ref{fig:comparisonsolutions} the solutions computed for the TV-IC model \eqref{gaussimp:numer} for different values of $\lambda_1$ and $\lambda_2$. The noisy image considered has been corrupted by a combination of salt \& pepper (density of missing pixels $d=5\%$) and Gaussian noise (with zero mean and $\sigma^2=0.005$). For comparison, we also present the denoising results computed with the standard denoising models TV-$L^1$ \cite{nikolovaoutremov,duvaltvL1}, TV-$L^2$ \cite{rudinosherfatemi} and the additive TV-$L^1$-$L^2$ combination \cite{bilevellearning,noiselearning,langerl1l2}. 
For reference, we recall below the models mentioned:
\begin{align}  
& \min_{u} \left\{ |Du|_\gamma(\Omega) + \lambda_1~\|f-u\|_{L^1(\Omega),\gamma}\right\}, \label{TVL1}\tag{TV-$L^1$}\\
& \min_{u} \left\{ |Du|_\gamma(\Omega) + \frac{\lambda_2}{2}~\|f-u\|^2_{L^2(\Omega)}\right\}, \label{TVL2}\tag{TV-$L^2$}\\
& \min_{u} \left\{ |Du|_\gamma(\Omega) + \lambda_1~\|f-u\|_{L^1(\Omega),\gamma}+ \frac{\lambda_2}{2}~\|f-u\|^2_{L^2(\Omega)}\right\}. \label{TVL1L2}
\tag{TV-$L^1$-$L^2$} \\ \notag
\end{align}

\vspace{-0.7cm}

\noindent For consistency, in our experiments we also Huber-regularise both the TV term \eqref{huberregularTV} and the $L^1$ term. As expected from Proposition \ref{prop:impgaussasympt} and verified numerically in Figure \ref{fig:asymptoticsimpgauss}, we observe that TV-$L^1$ and TV-$L^2$-type solutions can be obtained from \eqref{gaussimp:numer} by considering large weighting parameters $\lambda_2$ or $\lambda_1$, respectively. In these situations, we note that only one component of the noise is smoothed, namely the one corresponding to the active (i.e. non-vanishing) fidelity term in the model. Moreover, we observe that the computed solution of the TV-IC model \eqref{gaussimp:numer} is comparable to the one computed using the TV-$L^1$-$L^2$ denoising model, but with the additional property of a noise decomposition shown in Figure \ref{result:impgauss} above. Finally, as proved in point  \textit{iii)} of Proposition \ref{prop:impgaussasympt}, the noisy image $f$ is completely recovered by taking large parameters $\lambda_1$ and $\lambda_2$. For every model considered, all the parameters have optimised with respect to the best PSNR of the denoised image $u$. When looking at the asymptotics with respect to one parameter weight, one parameter has been set to $1e5$ and the other has been optimised with respect to the best PSNR of $u$. In the case when the joint asymptotics are studied, both parameters have been set to $1e5$.  

\begin{figure}[!h]
\begin{subfigure}[b]{0.22\textwidth}
\begin{center}
\includegraphics[height=3cm,width=3cm]{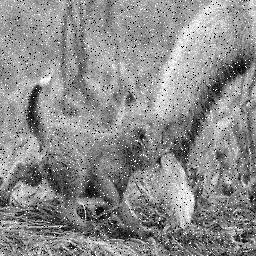}
\caption{Noisy image}
\end{center}
\end{subfigure}\quad
\begin{subfigure}[b]{0.22\textwidth}
\begin{center}
\includegraphics[height=3cm,width=3cm]{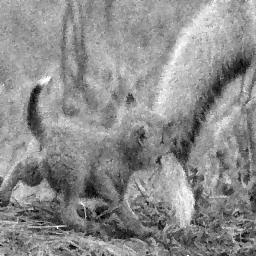}
\caption{TV-$L^1$}
\end{center}
\end{subfigure}
\begin{subfigure}[b]{0.22\textwidth}
\begin{center}
\includegraphics[height=3cm,width=3cm]{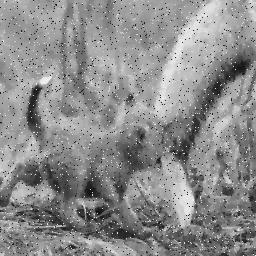}
\caption{TV-$L^2$}
\end{center}
\end{subfigure}\quad
\begin{subfigure}[b]{0.22\textwidth}
\begin{center}
\includegraphics[height=3cm,width=3cm]{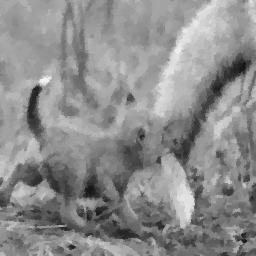}
\caption{TV-$L^1$-$L^2$}
\end{center}
\end{subfigure}\\
\begin{subfigure}[b]{0.22\textwidth}
\begin{center}
\includegraphics[height=3cm,width=3cm]{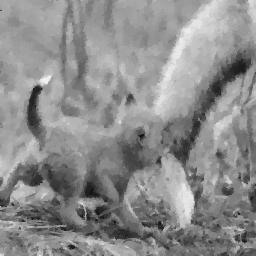}
\caption{TV-IC}
\end{center}
\end{subfigure}  \quad
\begin{subfigure}[b]{0.22\textwidth}
\begin{center}
\includegraphics[height=3cm,width=3cm]{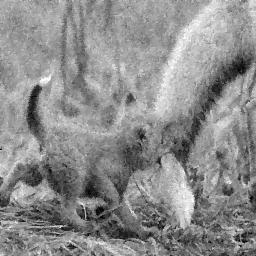}
\caption{TV-IC, $\lambda_2\gg 1$}
\end{center}
\end{subfigure}
\begin{subfigure}[b]{0.22\textwidth}
\begin{center}
\includegraphics[height=3cm,width=3cm]{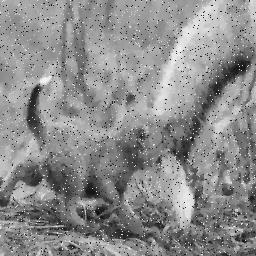}
\caption{TV-IC, $\lambda_1\gg 1$}
\end{center}
\end{subfigure}\quad
\begin{subfigure}[b]{0.22\textwidth}
\begin{center}
\includegraphics[height=3cm,width=3cm]{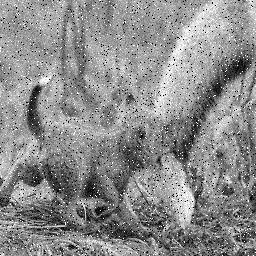}
\caption{TV-IC, $\lambda_1,\lambda_2\gg 1$}
\end{center}
\end{subfigure} 
\caption{Comparison between \eqref{TVL1}, \eqref{TVL2}, \eqref{TVL1L2} and TV-IC \eqref{gaussimp:numer} reconstructions for optimised and large parameters $\lambda_1, \lambda_2$. \\  \textbf{First row}: (a) noisy image corrupted with salt \& pepper noise ($d=5\%$) and Gaussian noise with zero mean and variance $\sigma^2=0.005$, PSNR=$17.18$ dB. (b) TV-$L^1$ solution with $\lambda_1=352$, PSNR=$25.53$ dB. (c) TV-$L^2$ solution with $\lambda_2=2121$, PSNR=$20.26$ dB. (d) TV-$L^1$-$L^2$ solution with $\lambda_1=351, \lambda_2=258$, PSNR=$25.62$ dB. \\ \textbf{Second row}: (e) TV-IC solution with $\lambda_1=352, \lambda_2=2121$, PSNR=$26.51$ dB (f) TV-IC solution with $\lambda_1=352, \lambda_2=1e5$, PSNR= $25.61$ dB. (g) TV-IC solution with $\lambda_1=1e5, \lambda_2=2121$, PSNR=$20.13$ dB. (e) TV-IC solution with $\lambda_1=\lambda_2=1e5$, PSNR=$17.17$ dB.}
\label{fig:comparisonsolutions}
\end{figure}

\subsection{Gaussian-Poisson case: numerical results}   \label{numres:gausspoiss}

For the numerical solution of the mixed Gaussian-Poisson model presented in Section \ref{subsec:gausspoiss}, we relax the unilateral constraints on $u$ and $v$ by adding two standard penalty terms as follows:
\begin{align}   
\min_{u,v} \Big\{ J_\gamma(u,v):=|Du|_\gamma(\Omega) + \frac{\lambda_1}{2}~\| v \|_{\ell^2}^2  +  \lambda_2~ \int_\Omega \left(u - (f-v) +(f-v)\log \left(\frac{f-v}{u}\right)\right)~dx   \notag \\ 
+ \frac{\gamma_1}{2}\|\min(u,0)\|_{\ell^2}^2 + \frac{\gamma_2}{2}  \|\min(f-v,0)\|_{\ell^2}^2 \Big\}. \label{gausspois:numer}
\end{align}
In the following numerical experiments, we start from initial values $\gamma_1^0=10$ and $\gamma_2^0=100$ and increase them throughout the iterations.

The optimality conditions for \eqref{gausspois:numer} read:
\begin{align*}
\frac{\partial J_\gamma}{\partial u}&=- \mathrm{div}\,\left(\frac{\gamma\nabla u}{\max(\gamma|\nabla u|, 1)} \right) +\lambda_2\left(1-\frac{f-v}{u}\right) + \gamma_1~ \chi_{\mathscr{I}_u} u  = 0,\\
\frac{\partial J_\gamma}{\partial v}&= \lambda_1~v -\lambda_2~\log\left( \frac{f-v}{u} \right) +\gamma_2~\chi_{\mathscr{I}_v} (v-f)= 0
\end{align*}
where $\chi_{\mathscr{I}_u}$ and $\chi_{\mathscr{I}_v}$ are the characteristic functions of the sets $\mathscr{I}_u=\left\{ x\in\Omega: u(x)<0\right\}$ and $\mathscr{I}_v=\left\{ x\in\Omega: v(x)>f(x)\right\}$, respectively.

Similarly as before, we express the system above in primal-dual form and write the modified SSN iteration for the increments $\delta_u, \delta_q, \delta_v$ which reads:
\begin{equation*}
\left\{
\begin{aligned}
&- \mathrm{div}\, \delta_q +\lambda_2\left(\frac{f-v}{u^2}\right) \delta_u + \frac{\lambda_2}{u} \delta_{v} +\gamma_1~\chi_{\mathscr{I}_u}~\delta_u =   \mathrm{div}\, q - \lambda_2\left(1-\frac{f-v}{u}\right) - \gamma_1~ \chi_{\mathscr{I}_u} u , \\
&\delta_q - \frac{\gamma \nabla \delta_u}{\max(1,\gamma|\nabla u|)}+ \chi_{\mathcal U_\gamma}  \gamma^2 \frac{\nabla u^T \nabla \delta_u}{\max(1,\gamma|\nabla u|)^2} \frac{q}{\max(1,|q|)} =-q+ \frac{\gamma \nabla u}{\max(1,\gamma|\nabla u|)},  \\
& \lambda_1~\delta_v+\frac{\lambda_2}{u}\delta_u+\frac{\lambda_2}{f-v}~\delta_v+\gamma_2~\chi_{\mathscr{I}_v}~\delta_v=-\left(  \lambda_1~v -\lambda_2~\log\left( \frac{f-v}{u} \right) +\gamma_2~\chi_{\mathscr{I}_v} (v-f) \right), 
\end{aligned}\right.
\end{equation*}
where the set $\mathcal{U}_\gamma$ is the same as the one defined in the previous section.

In Figure \ref{fig:poissgaussresults} we report the denoising results for the mixed Gaussian-Poisson model solved via the SSN iteration above. In order to generated the noisy data, we do as follows: at each pixel $(x_i,x_j)$ of  the image domain $\Omega$, the Poisson noise component is distributed with Poisson distribution \eqref{poissondistribution} with parameter $u_{ij}=u(x_i,x_j)$, whereas the Gaussian noise component of the examples has zero mean and different intensities (variance) specified in each case. Also in this case  we observe that the noise components are decomposed as expected, with the Gaussian one being distributed over the whole image domain and the Poisson one depending on the intensity of the image itself. This means that low intensity (darker) areas in the image will be corrupted by a smaller amount of Poisson noise than the high intensity (brighter) regions (compare, for instance the black sky background in the moon image or the synthetic image in the last row), whereas the Gaussian noise component is independent of the image intensity. As above, we observe a loss of structure in the TV reconstructed image which are captured in the noise components. 

\begin{figure}[!h]
\centering
\begin{subfigure}[b]{0.2\textwidth}
\includegraphics[height=2.9cm,width=2.9cm]{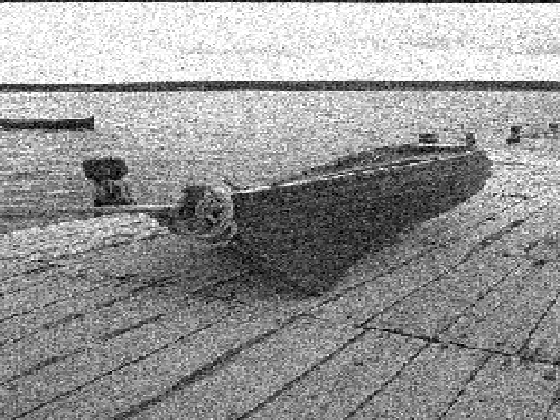}\vspace{0.1cm}
\includegraphics[height=2.9cm,width=2.9cm]{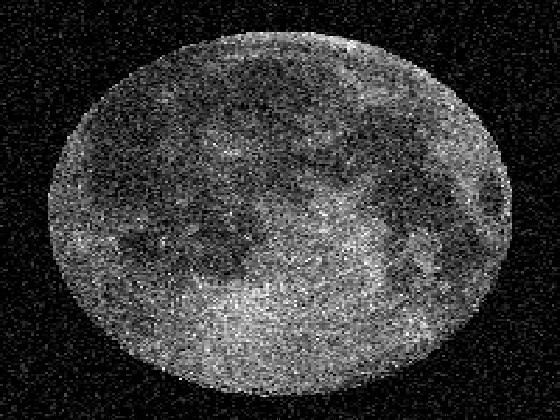}\vspace{0.1cm}
\includegraphics[height=2.9cm,width=2.9cm]{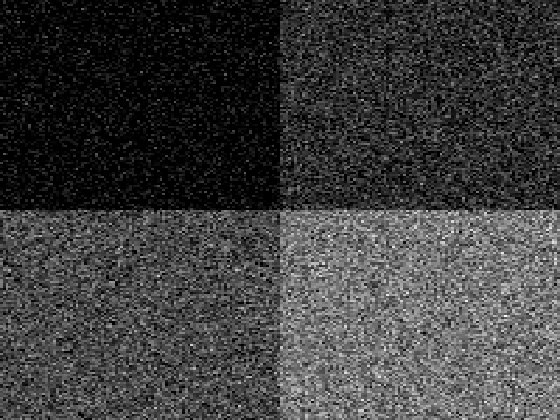}\vspace{0.1cm}
\includegraphics[height=2.9cm,width=2.9cm]{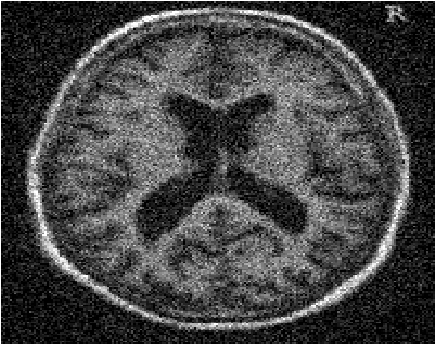}\vspace{0.1cm}
\caption{$f$}
\end{subfigure}
\hspace{-0.3cm}
\begin{subfigure}[b]{0.2\textwidth}
\includegraphics[height=2.9cm,width=2.9cm]{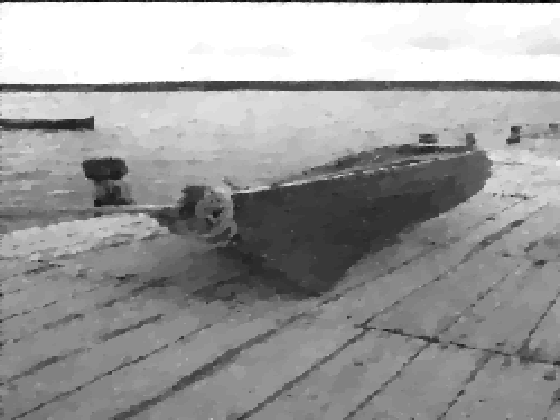}\vspace{0.1cm}
\includegraphics[height=2.9cm,width=2.9cm]{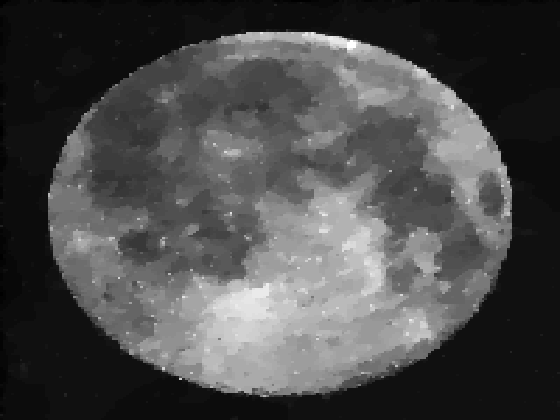}\vspace{0.1cm}
\includegraphics[height=2.9cm,width=2.9cm]{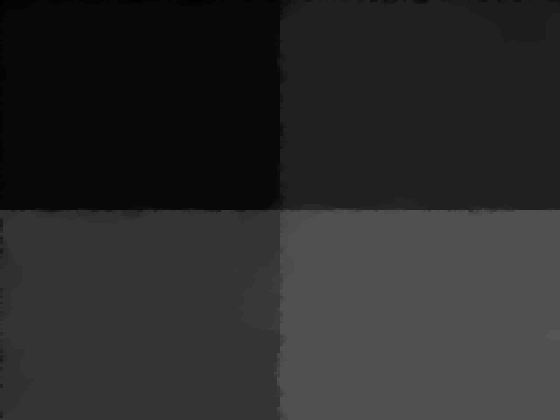}\vspace{0.1cm}
\includegraphics[height=2.9cm,width=2.9cm]{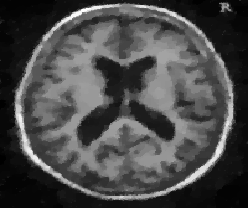}\vspace{0.1cm}
\caption{$u$}
\end{subfigure}
\hspace{-0.3cm}
\begin{subfigure}[b]{0.2\textwidth}
\includegraphics[height=2.9cm,width=2.9cm]{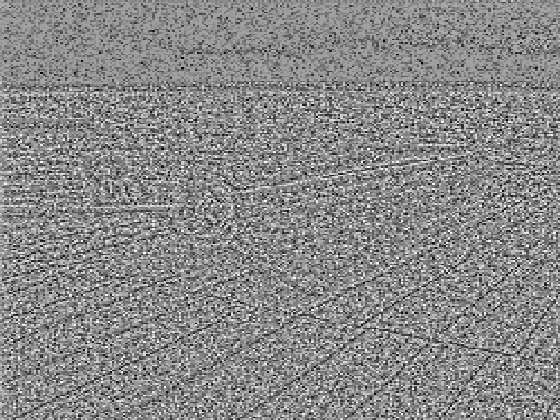}\vspace{0.1cm}
\includegraphics[height=2.9cm,width=2.9cm]{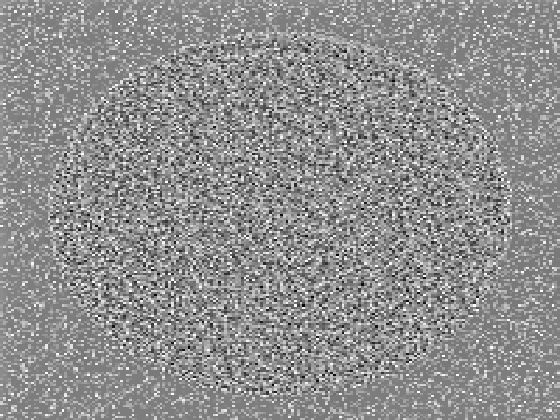}\vspace{0.1cm}
\includegraphics[height=2.9cm,width=2.9cm]{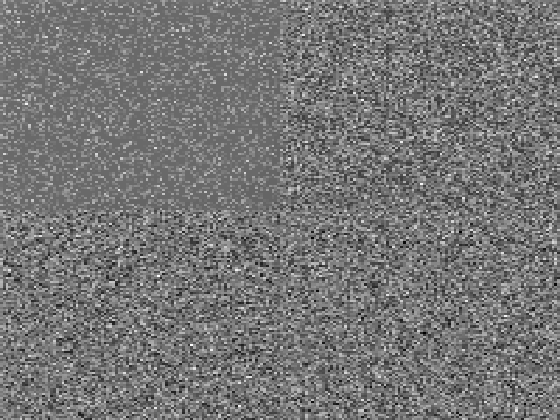}\vspace{0.1cm}
\includegraphics[height=2.9cm,width=2.9cm]{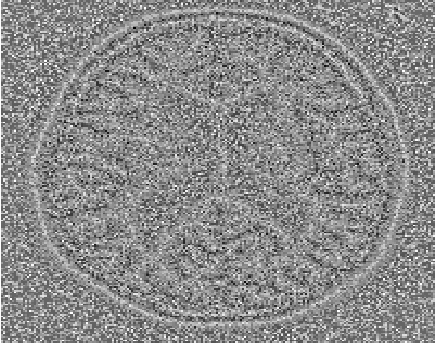}\vspace{0.1cm}
\caption{$v$}
\end{subfigure}
\hspace{-0.3cm}
\begin{subfigure}[b]{0.2\textwidth}
\includegraphics[height=2.9cm,width=2.9cm]{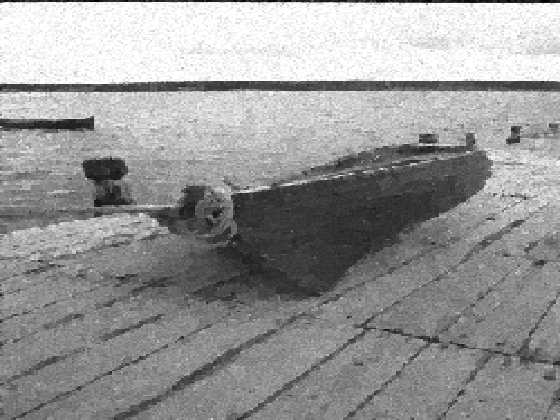}\vspace{0.1cm}
\includegraphics[height=2.9cm,width=2.9cm]{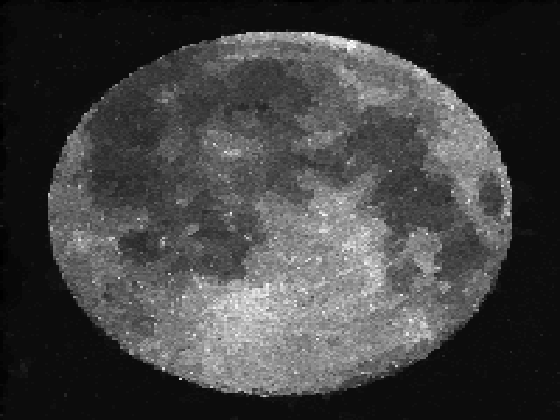}\vspace{0.1cm}
\includegraphics[height=2.9cm,width=2.9cm]{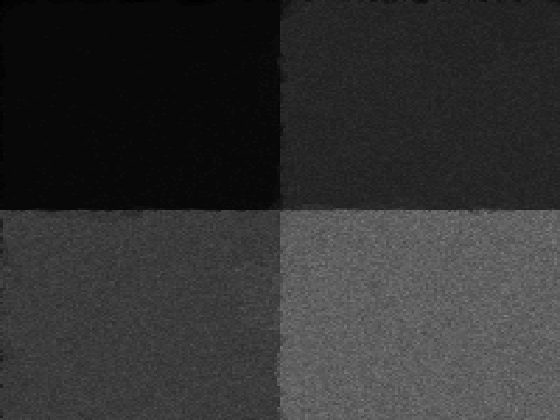}\vspace{0.1cm}
\includegraphics[height=2.9cm,width=2.9cm]{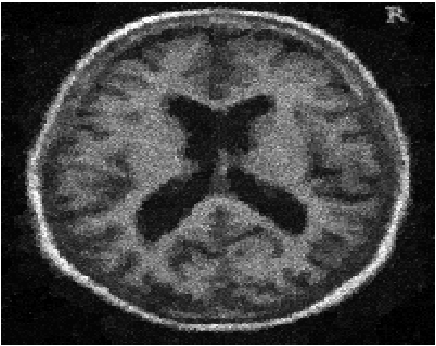}\vspace{0.1cm}
\caption{$f-v$}
\end{subfigure}
\hspace{-0.3cm}
\begin{subfigure}[b]{0.2\textwidth}
\includegraphics[height=2.9cm,width=2.9cm]{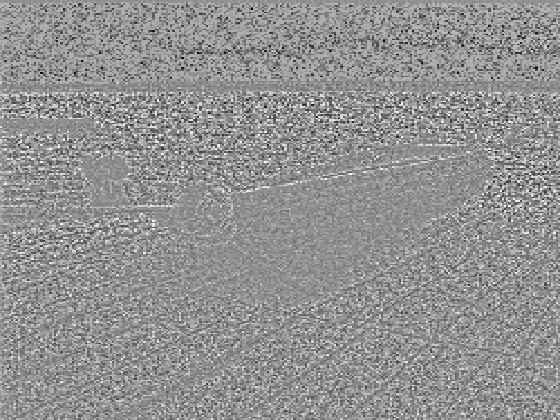}\vspace{0.1cm}
\includegraphics[height=2.9cm,width=2.9cm]{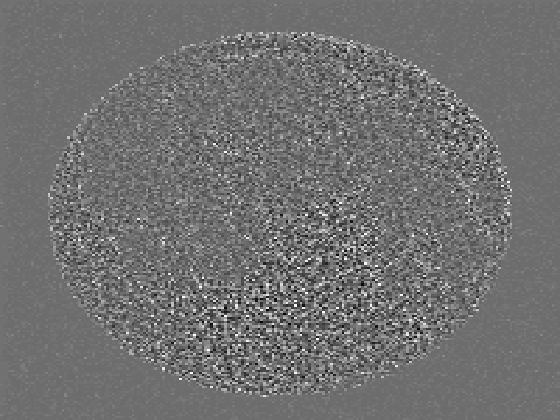}\vspace{0.1cm}
\includegraphics[height=2.9cm,width=2.9cm]{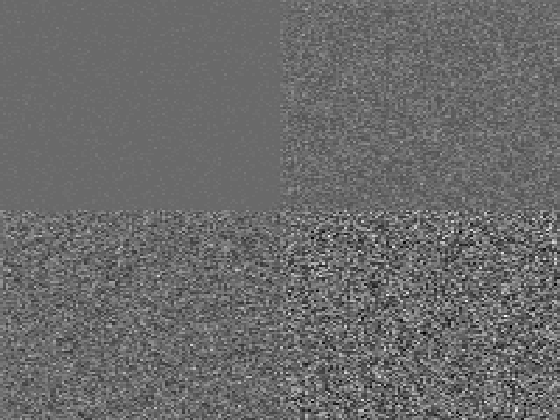}\vspace{0.1cm}
\includegraphics[height=2.9cm,width=2.9cm]{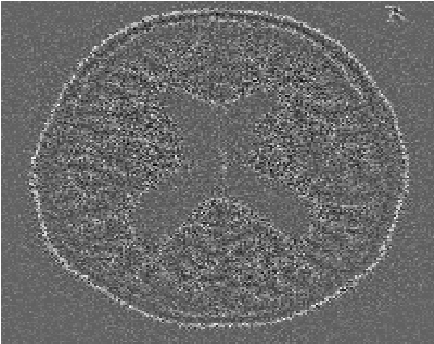}\vspace{0.1cm}
\caption{$f-v-u$}
\end{subfigure}

\caption{\textbf{First column}: Images corrupted with Poisson and Gaussian noise. \textbf{Second column}: Denoising result. \textbf{Third column}: Gaussian noise component. \textbf{Fourth column}: Poisson residuum. \textbf{Fifth column}:  Poisson noise component. \\\textbf{First row}: Gaussian noise variance $\sigma^2=0.005$. Noisy image PSNR=$19.66$ dB.   Denoised version PSNR=$26.03$ dB. Parameters: $\lambda_1=4289$, $\lambda_2=5503$. \\
 \textbf{Second row}: Gaussian noise variance $\sigma^2=0.01$. Noisy image PSNR=$19.46$ dB.  Denoised version PSNR=$26.19$ dB. Parameters: $\lambda_1=2903$, $\lambda_2=2107$. \\
 \textbf{Third row}: Gaussian noise variance $\sigma^2=0.005$. Noisy image PSNR=$22.39$ dB. Denoised version PSNR=$33.04$ dB. Parameters: $\lambda_1=2105$, $\lambda_2=1896$.\\
  \textbf{Fourth row}: Gaussian noise variance $\sigma^2=0.05$. Noisy image PSNR=$18.62$ dB. Denoised version PSNR=$23.87$ dB. Parameters: $\lambda_1=809$, $\lambda_2=712$.}
\label{fig:poissgaussresults}
\end{figure}

\medskip

In Figure \ref{fig:gausspoisscomparison} we compare the reconstructions obtained using TV denoising models with single $L^2$ \cite{rudinosherfatemi}, $KL$ \cite{alexTVpoisson,lechartrandTVpoisson} and the sum of the two data fidelities as in \cite{noiselearning,bilevellearning} as well as with the exact log-likelihood derived in \cite{Jezierska2012,poissongauss2013}. Namely, we compare our method with Huber-regularised versions of the \eqref{TVL2} model, and with
\begin{align}
& \min_u \left\{ |Du|_\gamma(\Omega) + \lambda_2 \int_\Omega \left(u - f~\log u\right)~dx \right\},\label{TVKL}\tag{TV-KL} \\ 
& \min_u \left\{ |Du|_\gamma(\Omega) + \frac{\lambda_1}{2}\| f-u \|_{L^2(\Omega)}^2 + \lambda_2 \int_\Omega \left(u - f~\log u\right)~dx \right\}, \label{TVL2KL}\tag{TV-$L^2$-KL}\\
& \min_u \left\{ |Du|_\gamma(\Omega) -\int_\Omega \log \left(\sum_{n=0}^{+\infty} \frac{u^n e^{-u}}{n!}
\frac{e^{-\frac{(u-n)^2}{2\sigma^2}}}{\sqrt{2\pi\sigma^2}}\right)~dx \right\}. \label{TV_exactPG}\tag{TV-GP}
\end{align}

\begin{figure}[!h]
\begin{subfigure}[b]{0.28\textwidth}
\begin{center}
\includegraphics[height=3.3cm,width=3.3cm]{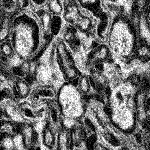}
\caption{Noisy image}
\end{center}
\end{subfigure}\quad
\begin{subfigure}[b]{0.28\textwidth}
\begin{center}
\includegraphics[height=3.3cm,width=3.3cm]{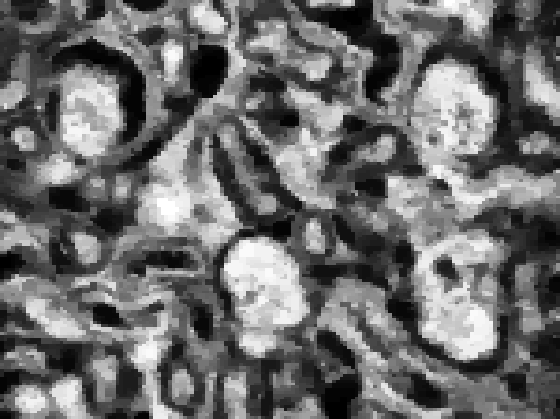}
\caption{TV-$L^2$}  \label{TVL2recon}
\end{center}
\end{subfigure}\quad
\begin{subfigure}[b]{0.28\textwidth}
\begin{center}
\includegraphics[height=3.3cm,width=3.3cm]{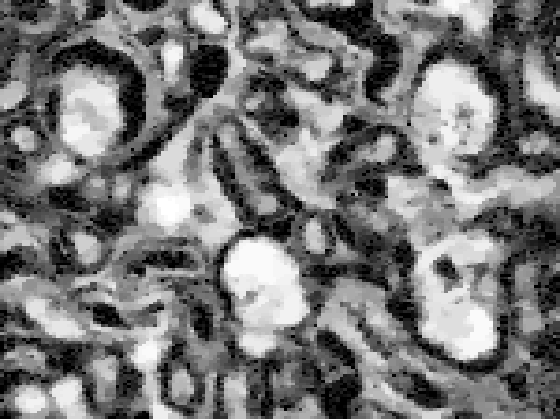}
\caption{TV-KL} \label{TVKLrecon}
\end{center}
\end{subfigure} \\
\begin{subfigure}[b]{0.28\textwidth}
\begin{center}
\includegraphics[height=3.3cm,width=3.3cm]{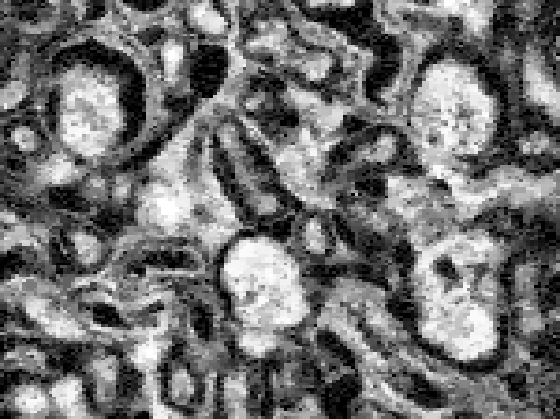}
\caption{TV-$L^2$-KL}  \label{TVL2KLrecon}
\end{center}
\end{subfigure} \quad
\begin{subfigure}[b]{0.28\textwidth}
\begin{center}
\includegraphics[height=3.3cm,width=3.3cm]{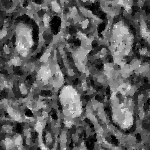}
\caption{TV-GP} 
\end{center}
\end{subfigure}\quad
\begin{subfigure}[b]{0.28\textwidth}
\begin{center}
\includegraphics[height=3.3cm,width=3.3cm]{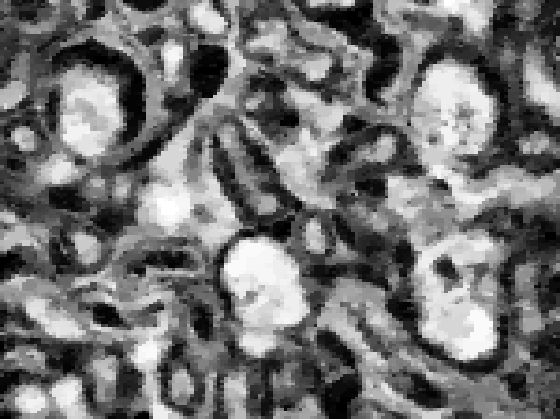}
\caption{TV-IC}  \label{TVICrecon}
\end{center}
\end{subfigure}
\caption{Comparison between solutions of TV-IC model \eqref{fig:poissgaussresults} and solutions of  \eqref{TVL2}, \eqref{TVKL}, \eqref{TVL2KL} and \eqref{TV_exactPG} models. \textbf{First row}: (a) noisy image corrupted with Gaussian noise with zero mean and variance $\sigma^2=0.005$ and Poisson noise with parameter $u$, PSNR=$18.81$dB. (b) TV-$L^2$ solution with $\lambda_1=1800$, PSNR=$22.97$dB. (c) TV-KL solution with $\lambda_2=1200$, PSNR=$21.19$dB. \textbf{Second row}: (d) TV-$L^2$-KL solution with $\lambda_1=520, \lambda_2=1100$. PSNR=$22.24$dB.  (e) Solution of the TV-GP model \eqref{TV_exactPG} model. PSNR=$19.65$dB. (f) Solution of TV-IC model \eqref{gausspois:numer} with $\lambda_1=1200, \lambda_2=1800$, PSNR=$22.52$dB.}
\label{fig:gausspoisscomparison}
\end{figure}

Finally, in Figure \ref{fig:gausspoisscomparisonlargeparams} we report the results of the TV-IC model for large values of the parameters $\lambda_1$ and $\lambda_2$ to show how the reconstruction changes for different choices of the parameters. As shown in Section \ref{subsec:asymptpoissgauss} such choices correspond to enforcing the \eqref{TVKL} and the \eqref{TVL2KL} model, respectively. 

\begin{figure}[!h]
\begin{subfigure}[b]{0.22\textwidth}
\begin{center}
\includegraphics[height=3cm,width=3cm]{brain_gauss001pois_noise}
\caption{Noisy image}
\end{center}
\end{subfigure}
\begin{subfigure}[b]{0.22\textwidth}
\begin{center}
\includegraphics[height=3cm,width=3cm]{brain_gauss001pois_den}
\caption{TV-IC}   \label{TVICbrain}
\end{center}
\end{subfigure}\quad
\begin{subfigure}[b]{0.22\textwidth}
\begin{center}
\includegraphics[height=3cm,width=3cm]{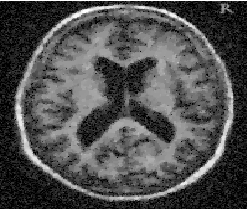}
\caption{TV-IC, $\lambda_1\gg1$}  \label{TVIClargelambda1}
\end{center}
\end{subfigure}\quad
\begin{subfigure}[b]{0.22\textwidth}
\begin{center}
\includegraphics[height=3cm,width=3cm]{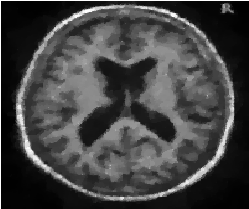}
\caption{TV-IC, $\lambda_2 \gg 1$} \label{TVIClargelambda2}
\end{center}
\end{subfigure}
\caption{Comparison between solutions of TV-IC model \eqref{fig:poissgaussresults} for large values of $\lambda_1$ and $\lambda_2$. \\
(a) Noisy image corrupted with Gaussian noise with zero mean and variance $\sigma^2=0.05$ and Poisson noise, PSNR=$18.62$ dB. (b) TV-IC solution with $\lambda_1=809$, $\lambda_2=712$, PSNR=$23.87$ dB. (c) TV-IC solution with large $\lambda_1$, $\lambda_2=712$, PSNR=$19.83$ dB. (d) TV-IC solution with large $\lambda_2$, $\lambda_1=809$, PSNR=$22.71$ dB.}
\label{fig:gausspoisscomparisonlargeparams}
\end{figure}

To conclude, we report in Figure \ref{fig:asymptoticspoissgauss} numerical tests on the asymptotical behaviour of the model \eqref{gausspois:numer} as the fidelity weights go to infinity. As shown in Proposition \ref{prop:gausspoissasympt}, both the TV-$L^2$ model for Gaussian noise removal \eqref{TVL2} (whose solution is denoted by $u_{TV-L^2}$) and the TV-KL one \eqref{TVKL} (whose solution is denoted by $u_{TV-KL}$) are recovered asymptotically under appropriate norms.

\begin{figure}[!h]
\centering
\begin{subfigure}[b]{0.45\textwidth}
\includegraphics[height=4cm,width=7cm]{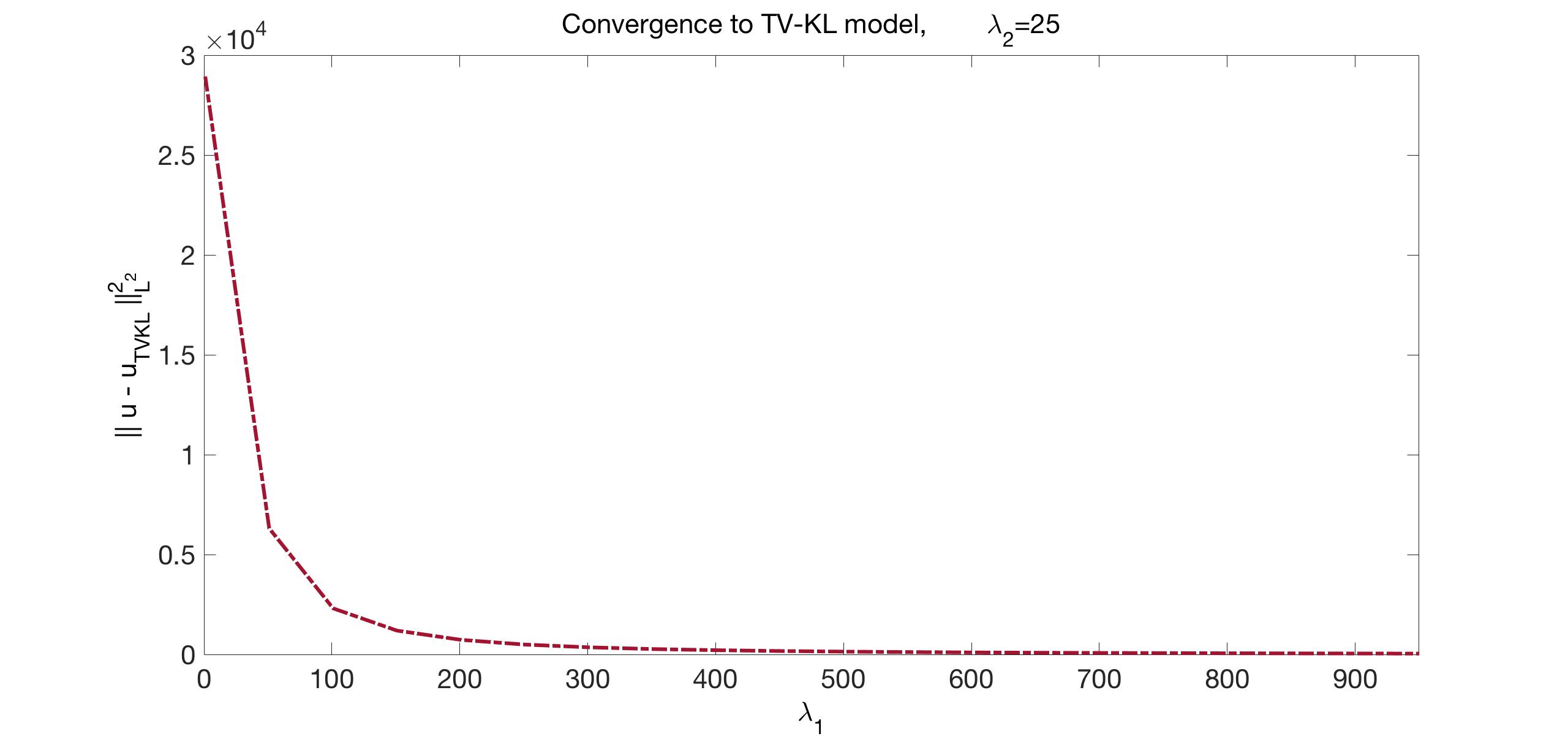}\vspace{0.1cm}
\caption{$\|u-u_{TV-KL}\|_{L^2}^2$ decay}
\end{subfigure}
\begin{subfigure}[b]{0.45\textwidth}
\includegraphics[height=4cm,width=7cm]{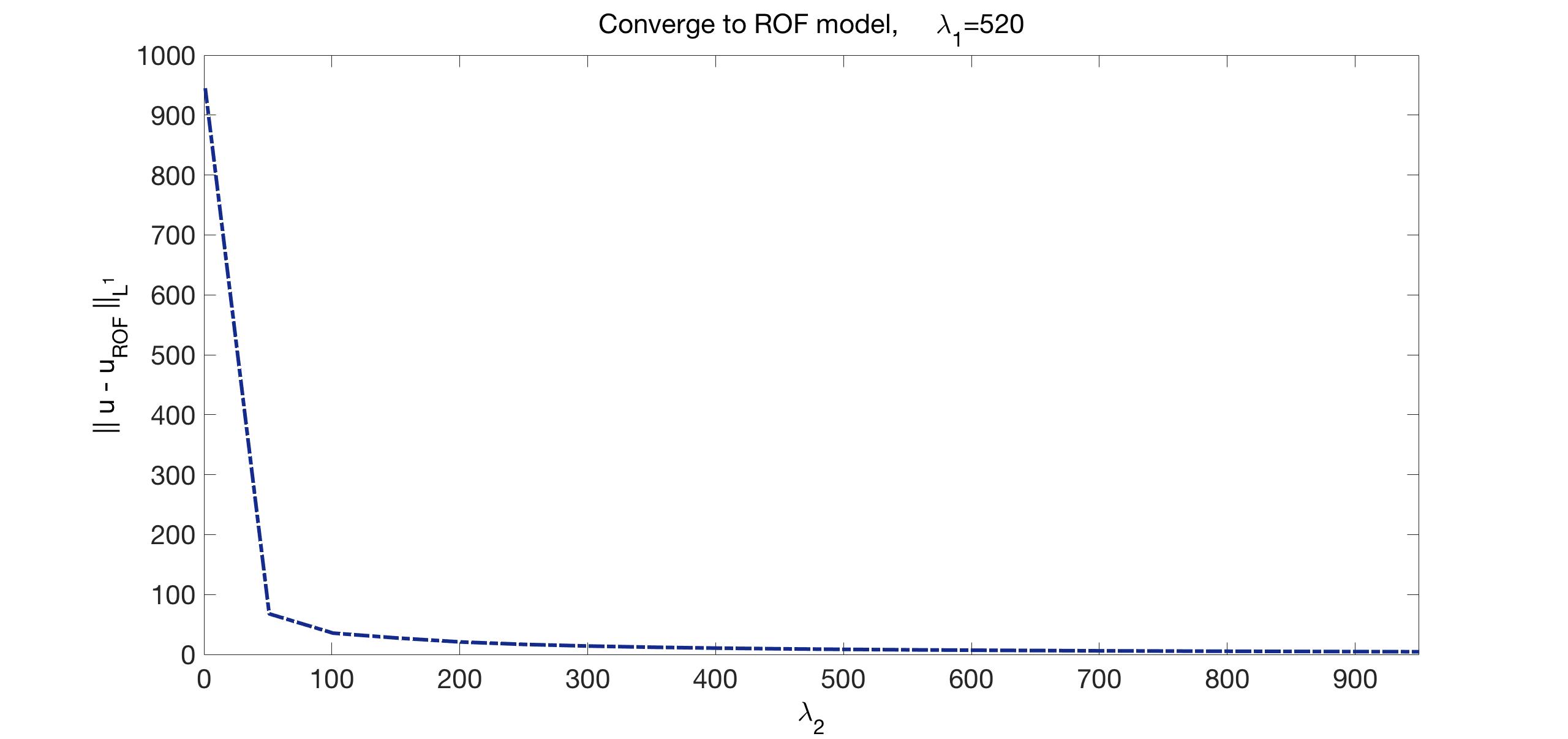}\vspace{0.1cm}
\caption{$\|u-u_{TV-L^2}\|_{L^1}$ decay}
\end{subfigure}
\caption{Convergence to single noise models as parameters $\lambda_1, \lambda_2$ of \eqref{gausspois:numer} go to infinity.}
\label{fig:asymptoticspoissgauss}
\end{figure}

\section{Conclusions and outlook}

In this paper we presented a novel variational approach for mixed noise removal. Mixed noise occurs in many applications where different acquisition and/or transmission sources may create interferences of different statistical nature in the image. Here we focused on two cases of mixed noise, namely salt \& pepper noise mixed with Gaussian noise and Gaussian noise combined with Poisson noise. Our variational model, which we call TV-IC, constitutes an infimal convolution combination of standard data fidelities classically associated to one single noise distribution and a total variation (TV) regularisation is used as regularising energy. The well-posedness of the model is studied and it is shown how single noise models can be recovered from the combined model ``asymptotically'', i.e. by letting the weighting parameters of the model become infinitely large. We also gave a statistical motivation for our model, modelling the noise as probability distributions with negative exponential structure (such as Laplace, Gaussian and Poisson). For our numerical experiments we used a semi-smooth Newton (SSN) second-order method to solve efficiently a Huber-regularised version of the problem. In several numerical results and comparisons with existing methods the properties of the proposed TV-IC model are discussed. 

Our numerical results show the property of the model of decomposing the noise present in the data in its different single-noise components. This is achieved by the particular modelling of data fidelities considered, which allows for a splitting of the noise in its constituting parts.  Comparisons with state-of-the-art models dealing with the combined case are reported. From a computational point of view, the use of a second-order SSN scheme allows for an efficient computation of the numerical solution of the TV-IC model.


The novel modelling of mixed noise distributions introduced in this paper offers several interesting problems for future research. Among those we believe it would be interesting to study:
\begin{itemize} 
\item Parameter learning for TV-IC as outlined in Section \ref{sec:learning_mot}.
\item The design of a more general model which could feature a combination of more noise distributions. 
\item The use of higher-order regularisations such as ICTV \cite{chambollelions1997} and TGV \cite{TGV,diff_tens} for the reduction of the problem of loss of structures encoded in the noise residuals, compare Figure \ref{result:impgauss} and \ref{fig:poissgaussresults}.
\item The derivation of a quality measure which is especially designed to assess optimality with respect to the splitting of the noise into its components.
\item Characterisation of solutions of the TV-IC model, starting from the one-dimensional case.
\end{itemize}

Despite these open problems, we believe that the method presented in this paper is a sensible and efficient alternative to state-of-the-art data fidelity modelling of mixed noise distributions due to its statistical coherence, simple numerical realisation and new noise decomposition feature.

\subsection{Outlook on parameter learning}  \label{sec:learning_mot}
In the spirit of recent developments in the context of learning the optimal noise model from examples \cite{noiselearning,lucasampling,bilevellearning,kunisch2013bilevel,interiorpaper2015}, we consider the TV-IC model and give a preliminary discussion on the selection of optimal parameters $\lambda_1$ and $\lambda_2$ in the mixed noise case. Denoting by $u_{\lambda_1,\lambda_2}$ the TV-IC reconstructed image and by $\tilde{u}$ the corresponding ground-truth, standard cost functionals in the literature assessing optimality are the $L^2$ cost functional 
\begin{equation}   \label{costfunctL2}
F_{L^2}(u_{\lambda_1,\lambda_2}):=\| u_{\lambda_1,\lambda_2} - \tilde{u} \|_{L^2(\Omega)}^2
\end{equation}
and the Huber-regularised TV cost functional
\begin{equation}    \label{huberisedTVcost}
F_{L^1_\gamma\nabla}(u_{\lambda_1,\lambda_2}):=\| D(u_{\lambda_1,\lambda_2} - \tilde{u}) \|_{L^1(\Omega),\gamma}
\end{equation}
where the $L^1$ term has been Huber-regularised as in \eqref{huberregular}-\eqref{huberregularTV}. This choice has been proposed in \cite{interiorpaper2015} and, despite being different from classical quality measures such as PSNR and SSIM, has been shown to represent visually pleasant results.

We consider the case of the brain image corrupted only with salt \& pepper noise with a percentage of missing pixels $d=5\%$, see Figure \ref{fig:learningmotivation}. In Figure \ref{fig:learning_impgasuss_onlyimp} we plot the cost functional \eqref{huberisedTVcost} against the parameters $\lambda_1$ and $\lambda_2$, indicating with a red cross the minimum of the cost functional within the tested range of parameters. For comparison, we consider the TV-IC model \eqref{gaussimp:numer} and the TV-$L^1$-$L^2$ model \eqref{TVL1L2} used previously in \cite{HintermuellerLanger2013,noiselearning}.  Both models accommodate salt \& pepper and Gaussian noise, whenever the parameters are positive and finite. However, in the particular case considered we expect to select optimal parameters $\lambda_1^*$ and $\lambda_2^*$ enforcing a TV-$L^1$ type model \eqref{TVL1} which is the appropriate one in the case of only salt \& pepper noise \cite{nikolovaoutremov,duvaltvL1}. Our plot shows that in both cases, the optimal value for \eqref{huberisedTVcost} within the tested range of parameters is achieved in correspondence with an optimal pair $(\lambda_1^*,\lambda_2^*)$ which approximates the TV-$L^1$ type model. In particular, the TV-IC plot in Figure \ref{optimalparams1} shows that the optimal choice of parameters corresponds to $\lambda_1^*=281$ and a very large $\lambda_2^*$. As shown in Section \ref{subsec:asymptgaussimp} and confirmed numerically in Figure \ref{fig:asymptoticsimpgauss}, in this case such choice approximates the TV-$L^1$ model, as the Gaussian noise residuum decays to zero as $\lambda_2\to \infty$. Similarly, for the TV-$L^1$-$L^2$ model in Figure \ref{optimalparams2}, the same optimal parameter $\lambda_1^*$ is found with $\lambda_2^*=0$, thus enforcing similarly a TV-$L^1$ denoising model.

\begin{figure}[!h]
\centering
\begin{subfigure}[b]{0.25\textwidth}
\includegraphics[height=3cm,width=4cm]{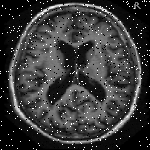}\vspace{0.1cm}
\caption{Noisy image}
\label{noisyimageimponly}
\end{subfigure}\qquad
\begin{subfigure}[b]{0.25\textwidth}
\includegraphics[height=3cm,width=4cm]{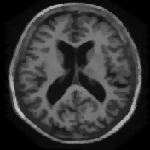}\vspace{0.1cm}
\caption{Optimal denoised }
\label{optimalbraindenoiseim}
\end{subfigure}\qquad
\begin{subfigure}[b]{0.3\textwidth}
\includegraphics[height=3.13cm,width=4cm]{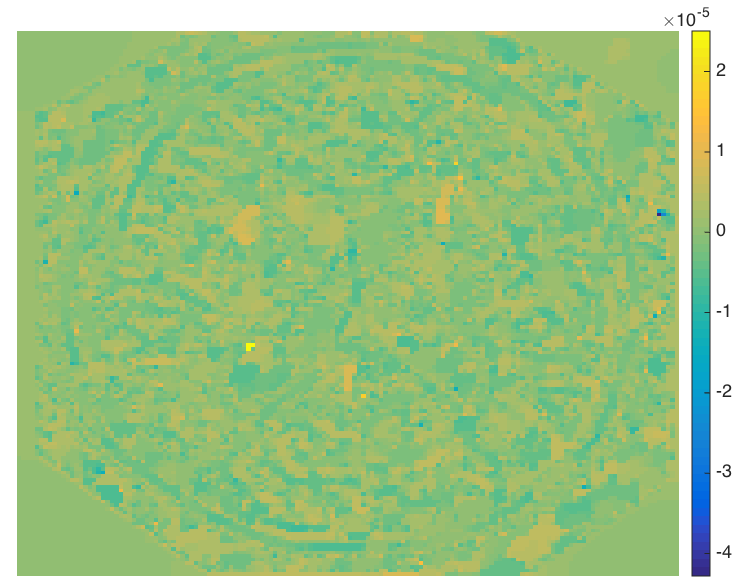}\vspace{0.1cm}
\caption{TV-IC/$L^1$-$L^2$ discrepancy}
\label{optimalbraindenoisediff}
\end{subfigure}
\caption{TV-IC denoising with optimal parameters for single salt \& pepper denoising. For both the TV-IC \eqref{gaussimp:numer} and  TV-$L^1$-$L^2$ denoising model \eqref{TVL1L2}, the optimal parameters $(\lambda_1^*, \lambda_2^*)$ selected enforce a TV-$L^1$ type model. Figure \ref{optimalbraindenoisediff} plots the difference between TV-IC and TV-$L^1$-$L^2$ solutions computed in correspondence with the optimal parameters: the maximum discrepancy between the two has absolute value of $4.32\cdot 10^{-5}$.}
\label{fig:learningmotivation}
\end{figure}

\begin{figure}[!h]
\centering
\begin{subfigure}[b]{0.45\textwidth}
\includegraphics[height=4cm,width=7cm]{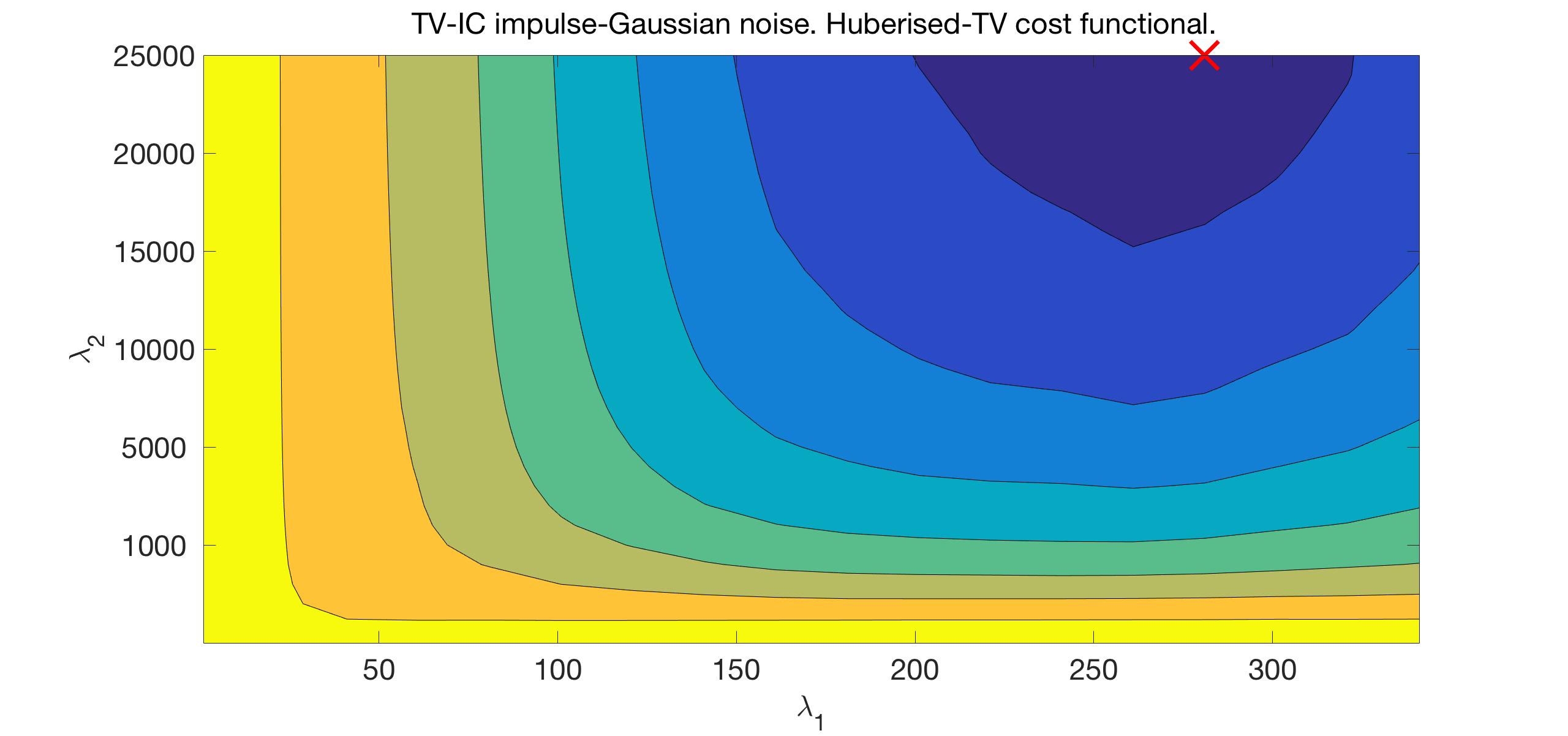}\vspace{0.1cm}
\caption{Optimal $\lambda_1,\lambda_2$ for TV-IC \eqref{gaussimp:numer}}
\label{optimalparams1}
\end{subfigure}\qquad
\begin{subfigure}[b]{0.45\textwidth}
\includegraphics[height=4cm,width=7cm]{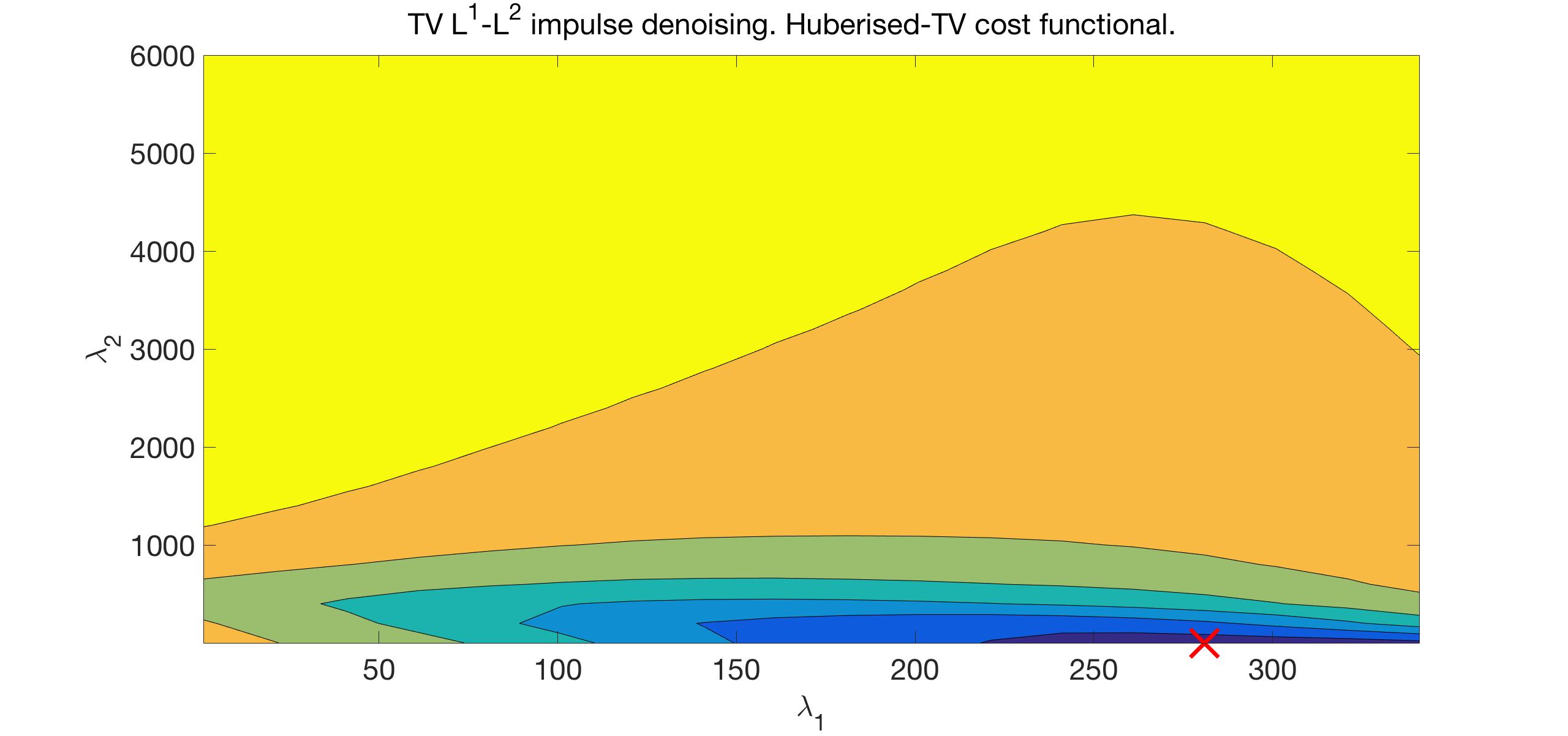}\vspace{0.1cm}
\caption{Optimal $\lambda_1,\lambda_2$ for \eqref{TVL1L2}}
\label{optimalparams2}
\end{subfigure}
\caption{Optimal value of parameters $\lambda_1^*$ and $\lambda_2^*$ for TV-IC \eqref{gaussimp:numer} and TV-$L^1$-$L^2$ denoising model \eqref{TVL1L2} applied to the brain image corrupted with salt \& pepper noise only, Figure \ref{noisyimageimponly} ($d=5\%$). Optimality is measured with respect to the Huberised-TV quality measure \eqref{huberisedTVcost}, compare \cite{interiorpaper2015}. In both cases, the combined model enforces the Gaussian component to be zero with the choice of the optimal $L^1$ weight $\lambda_1^*=281$. The optimum is depicted with a red cross.}
\label{fig:learning_impgasuss_onlyimp}
\end{figure}

Motivated by these preliminary results, let us next consider the formal treatment of the bilevel learning problem in case of a combination of salt \& pepper and Gaussian noise. We start from the general lower level Huber-regularised minimisation problem \label{jointminimisation_huber}. As in \cite{noiselearning}, we now introduce an elliptic-type regularisation for the derivation of the optimality system in an Hilbert space framework and consider:
\begin{equation} \label{regprob}
\min_{\substack{ u\in H^1_0(\Omega)\\ v\in L^2(\Omega)}} \left\{ \frac{\varepsilon}{2} \| \nabla u \|_{L^2}^2+|Du|_\gamma(\Omega) + \lambda_1 \|v\|_{L^1(\Omega),\gamma} + \frac{\lambda_2}{2} \|f-v-u\|^2_{L^2(\Omega)} \right\}.
\end{equation}

Then reduced bilevel optimisation problem, under the regularised constraint \eqref{regprob} reads
\begin{equation}   \label{costfunct}
\min_{\lambda_1\lambda_2\geq 0}~ \mathcal{F}(u)
\end{equation}
subject to \eqref{regprob}. The cost functional $\mathcal{F}$ is here assumed to be differentiable. For example, we can think about it as the squared $L^2$ \eqref{costfunctL2} or the Huberised TV cost functional \eqref{huberisedTVcost}. 
A necessary and sufficient optimality condition for \eqref{regprob} is then given by the following system:\begin{subequations} \label{regularoptimal}
\begin{equation}   \label{optimalityu}
\varepsilon(D\bar{u},Dw)_{L^2}  + (h_\gamma(D\bar{u}),Dw)_{L^2} - \lambda_2 (f- \bar v-\bar u,w)_{L^2}=0,\quad\text{for all }w\in H^1_0,
\end{equation}
\begin{equation}   \label{optimalityn}
\lambda_1(h_\gamma(\bar{v}),z)_{L^2}-\lambda_2 (f- \bar v-\bar u,z)_{L^2} =0\qquad\qquad\qquad\qquad\qquad\ \text{for all }z\in L^2(\Omega).
\end{equation}
\end{subequations}
where $(\bar{v},\bar{u})$ corresponds to the optimal solution and $h_\gamma(\cdot)$ is a smoother $C^2$ Huber-type regularisation, for exmaple of the form
\begin{equation*}
h_{\gamma}(z):=
\begin{cases}
\frac{z}{|z|} &\text{ if }~\gamma |z|-1 \geq \frac{1}{2\gamma}\\
 \frac{z}{|z|} (1- \frac{\gamma}{2} (1- \gamma |z|+\frac{1}{2\gamma})^2) &\text{ if }~\gamma |z|-1 \in (-\frac{1}{2\gamma}, \frac{1}{2\gamma})\\
\gamma z &\text{ if }~\gamma |z|-1 \leq -\frac{1}{2\gamma},
\end{cases}
\end{equation*}
similar to \cite[Eq.(3.11)]{noiselearning}. In strong form, the equations \eqref{regularoptimal} read as follows:
\begin{align*}
-&\varepsilon\Delta \bar{u}- \text{div}\left( h_\gamma(D\bar{u})\right)- \lambda_2 (f- \bar v-\bar u)=0  \\
& \lambda_1 h_\gamma(\bar{v})-\lambda_2 (f- \bar v-\bar u)=0.\quad
\end{align*}

We now use the Lagrangian formalism to get an insight in the structure of the optimality system. In order to do that, we write (\textbf{formally}) the Lagrangian functional of the problem \eqref{costfunct} subject to \eqref{regprob} as:
\begin{multline}  
\mathcal{L}(\lambda_1,\lambda_2,u,v,p_1,p_2):=\mathcal{F}(u)- (\varepsilon D u, D p_1) -\left( h_\gamma(Du), D p_1 \right)\\+\lambda_2 (f-v-u, p_1) \label{lagrangian}
- \lambda_1 (h_\gamma(v), p_2)+\lambda_2 (f-v-u, p_2).
\end{multline}

In \eqref{lagrangian} the adjoint states $p_1$ and $p_1$ need to be properly defined.
Now, calling $\Lambda$, $S$ and $P$ the first, the second and the third pair of arguments for $\mathcal{L}$ corresponding to the pair of control, state and adjoint variables, respectively, we know that in correspondence of an optimal solution $(\lambda^\gamma_1,\lambda^\gamma_2,u_\gamma,v_\gamma)$ (where now we stress the dependence on the regularising parameter $\gamma$), we have that the following two relations hold true:
\begin{align}
& \mathcal{L}_S(\lambda^\gamma_1,\lambda^\gamma_2,u_\gamma,v_\gamma,p^\gamma_1,p^\gamma_2)=0,  \label{lagrangian1}\\
& \mathcal{L}_\Lambda(\lambda^\gamma_1,\lambda^\gamma_2,u_\gamma,v_\gamma,p^\gamma_1,p^\gamma_2)((\alpha,\beta)^T-(\lambda_1^\gamma,\lambda_2^\gamma)^T)\geq 0,\quad\text{for every }\alpha,\beta\geq0. \label{lagrangian2}
\end{align}

So let us compute (formally) such derivatives in correspondence of the optimal solution:
\begin{align*}
&\frac{\partial\mathcal{L}}{\partial u} (\lambda^\gamma_1,\lambda^\gamma_2,u_\gamma,v_\gamma,p^\gamma_1,p^\gamma_2)[w_1] =  (\mathcal{F}'(u_\gamma),w_1)_{L^2} -\varepsilon (D p^\gamma_1, D w_1)_{L^2} - (h'_\gamma(Du_\gamma) Dw_1, D p^\gamma_1)_{L^2}  \\
& \hspace{2cm}-\lambda_2 ( p_1^\gamma,w_1 )_{L^2}-\lambda_2 (p_2^\gamma,w_1 )_{L^2}=0,\qquad\qquad\text{for all }w_1\in H^1_0,\\
& \frac{\partial\mathcal{L}}{\partial v} (\lambda^\gamma_1,\lambda^\gamma_2,u_\gamma,v_\gamma,p^\gamma_1,p^\gamma_2)[w_2] =-\lambda_2( p_1^\gamma, w_2 )_{L^2} - \lambda_1 (p_2^\gamma, h_\gamma' (v) w_2 )_{L^2} \\
& \hspace{2cm} -\lambda_2 (p_2^\gamma,w_2 )_{L^2}=0,\qquad\qquad\qquad\qquad\qquad\ \text{for all }w_2\in L^2 (\Omega),
\end{align*}
which we can rewrite as:
\begin{subequations}   \label{optimality2}
\begin{align*}
\varepsilon(Dp_1^\gamma,Dw_1)_{L^2} & + (h'_\gamma(Du_\gamma)^*Dp^\gamma_1, Dw_1)_{L^2} + \\ & -\lambda_2 (p_1^\gamma+ p_1^\gamma,w_1 )_{L^2}=(g'(u_\gamma),w_1)_{L^2}, &&\text{for all }w_1\in H^1_0,\\
& -\lambda_1 h_\gamma'(v)^* p_2^\gamma+ \lambda_2 (p_1^\gamma+p_2^\gamma)=0 && \text{a.e. in }\Omega.   \notag
\end{align*}
\end{subequations}

Proceeding in an analogous way in order to get the optimality condition \eqref{lagrangian2} we have:
\begin{subequations}  
\begin{align}
&\frac{\partial\mathcal{L}}{\partial\Lambda_1}(\lambda^\gamma_1,\lambda^\gamma_2,u_\gamma,v_\gamma,p^\gamma_1,p^\gamma_2)(\alpha-\lambda_1^\gamma)=\Bigl(\int_\Omega h_\gamma'(v_\gamma)p^\gamma_2\Bigr)(\lambda_1^\gamma-\alpha)\geq 0, \notag\\
&\frac{\partial\mathcal{L}}{\partial\Lambda_2}(\lambda^\gamma_1,\lambda^\gamma_2,u_\gamma,v_\gamma,p^\gamma_1,p^\gamma_2)(\beta-\lambda_2^\gamma)=\left( \int_\Omega  (f-v_\gamma-u_\gamma) (p_1^\gamma+p_2^\gamma) \right) (\lambda_2^\gamma-\beta)\geq 0 \notag
\end{align}
\end{subequations}
for every $\alpha, \beta\geq 0$. Introducing the multipliers $\mu_1:= - \int_\Omega h'_\gamma (v_\gamma)p_2^\gamma$ and $\mu_2:= \int_\Omega  (f-v_\gamma-u_\gamma) (p_1^\gamma+p_2^\gamma)$ it the follows that
\begin{equation*}
\lambda_i^\gamma \geq 0, \qquad \mu_i \geq 0, \qquad \mu_i \lambda_i=0, \qquad \text{ for }i=1,2.
\end{equation*}

Altogether, for an optimal quadruplet $(\lambda_1^\gamma,\lambda_2^\gamma,u_\gamma, v_\gamma)\in\R_{\geq 0}\times \R_{\geq 0}\times H^1_0(\Omega)\times (L^2(\Omega))$ there exist $(p_1^\gamma, p_2^\gamma)\in H^1_0(\Omega)\times( L^2(\Omega))$ such that the following optimality system holds (in strong form):
\begin{align*}
-&\varepsilon \Delta u_\gamma - \text{div }h_\gamma(D u_\gamma)- \lambda_2(f-v_\gamma-u_\gamma)=0 &&\text{in }\Omega\\
& u_\gamma=0 && \text{on } \Gamma\\
& \lambda_1 h_\gamma(v_\gamma) - \lambda_2 (f-v_\gamma-u_\gamma)=0 &&\text{a.e. in }\Omega\\
-&\varepsilon \Delta p_1^\gamma - \text{div }\left[ h_\gamma'(D u_\gamma)^* D p_1^\gamma \right] + \lambda_2(p_1^\gamma+p_2^\gamma)=g'(u_\gamma) &&\text{in }\Omega\\
& p_1^\gamma=0 && \text{on } \Gamma\\
& \lambda_1 h_\gamma'(v_\gamma)^* p_2^\gamma - \lambda_2 (p_1^\gamma+p_2^\gamma)=0 &&\text{a.e. in }\Omega\\
& \mu_1= - \int_\Omega h'_\gamma (v_\gamma)p_2^\gamma\\
& \mu_2= \int_\Omega  (f-v_\gamma-u_\gamma) (p_1^\gamma+p_2^\gamma)\\
& \lambda_i^\gamma \geq 0, \qquad \mu_i \geq 0, \qquad \mu_i \lambda_i=0 && \text{for }i=1,2.
\end{align*}

\medskip

\begin{remark}
For the Gaussian and Poisson framework described in Section \ref{subsec:gausspoiss} additional difficulties have to be tackled. In this case the admissible sets $\mathcal{A}$ and $\mathcal{B}$ in \eqref{admissiblesetsgausspoiss} are required to guarantee convexity of $\phi$ in $u$. However, since this involves pointwise bounds on a state variable, the existence of Lagrange multipliers (even of low regularity) has to be carefully justified and the numerical solution of the problem becomes challenging.
\end{remark}

Another possible research direction is the study of the optimality system derived above in the limit as $\gamma\to\infty$ in order to show connections with the non-smooth original problem.

\section*{Acknowledgments}
The authors would like to thank Yi Yu, Sara Sommariva and Evangelos Papoutsellis for discussions on the statistical and analytical interpretation of the model. LC is grateful to Anna Jezierska for providing the code used in \cite{Jezierska2012,poissongauss2013,AnnaThesis2013} for the comparisons of results in Figure \ref{fig:gausspoisscomparison}. LC acknowledges the UK Engineering and Physical Sciences Research Council (EPSRC) grant Nr. EP/H023348/1 of the University of Cambridge Centre for Doctoral Training, the Cambridge Centre for Analysis (CCA) and the joint ANR/FWF Project ``Efficient Algorithms for Nonsmooth Optimization in Imaging" (EANOI) FWF n. I1148 / ANR-12-IS01-0003. CBS acknowledges support from Leverhulme Trust project on Breaking the non-convexity barrier, EPSRC grant Nr. EP/M00483X/1, the EPSRC Centre Nr. EP/N014588/1 and the Cantab Capital Institute for the Mathematics of Information. This research was partially funded by the Escuela Politécnica Nacional de Ecuador under award PIJ-15-22.

\begin{appendices}

\section{Proofs}   \label{append:proofs}

In this Appendix we include the proofs of Propositions \ref{wellposednimpgauss} and \ref{wellposednessgausspoiss}.

\begin{proof}[Proof of Proposition \ref{wellposednimpgauss}]
As observed in Remark \ref{remark:prox_map}, the functional $\Phi^{\lambda_1,\lambda_2}$ is the proximal map of the $L^1$ norm functional in the Hilbert space $L^2(\Omega)$ weighted by the parameter $\lambda_1/\lambda_2$ and evaluated in the point $f-u$  \cite[Section 12.4]{bauschkecombettes}. Moreover, in \cite{VaggelisIC2015} the authors show that such combination is nothing but the Huber regularisation of the $L^1$ norm. Existence and uniqueness properties can be shown directly by standard lower semicontinuity and strict convexity properties of $\mathscr{F}^{\lambda_1,\lambda_2}(f,u,\cdot)$ in $L^2(\Omega)$.
\end{proof}

\begin{proof}[Proof of Proposition \ref{wellposednessgausspoiss}]
Since the  $D_{KL}$ functional is positive  (see Remark \ref{positivityKL}), the functional $\mathscr{F}^{\lambda_1,\lambda_2}$ is bounded from below.  
Let us consider then a minimising sequence $\left\{ v_n \right\} \subset L^2(\Omega)\cap\mathcal{B}$ for the functional $\mathscr{F}^{\lambda_1,\lambda_2}(f,u,\cdot)$. Using the positivity of $D_{KL}$, we have that:
\begin{equation*}
\frac{\lambda_1}{2}\| v_n \|_{L^2(\Omega)}^2\leq \mathscr{F}^{\lambda_1,\lambda_2}(f,u,v_n)\leq C,\quad\text{ for all }n\geq 1.
\end{equation*}
Hence, we can extract a non-relabelled subsequence $\{ v_n \}$ weakly converging to $v$ in $L^2(\Omega)$. As $\Omega$ is bounded and $L^2(\Omega)$ is continuously embedded $L^1(\Omega)$, $\{ v_n \}$ is also converging weakly to $v$ in $L^1(\Omega)$. Due the continuity of the $L^2$ norm and the weak lower semicontinuity of $D_{KL}(\cdot, u)$ for a fixed nonnegative $u\in L^2(\Omega)$ with respect to the weak topology of $L^1(\Omega)$ (compare Proposition \ref{propos:KLproperties}), we have:
\begin{multline} 
 \mathscr{F}^{\lambda_1,\lambda_2}(f,u,v)=\frac{\lambda_1}{2}\| v \|_{L^2(\Omega)}^2 + \lambda_2 D_{KL}(f-v,u) \notag \\ \leq \liminf_{n\to\infty}~ \frac{\lambda_1}{2}\| v_n \|_{L^2(\Omega)}^2+D_{KL}(f-v_n,u)=~\liminf_{n\to\infty}~ \ \mathscr{F}^{\lambda_1,\lambda_2}(f,u,v_n). \notag
\end{multline}
Hence, $\mathscr{F}^{\lambda_1,\lambda_2}(f,u,\cdot)$ is weakly lower semicontinuous in $L^2(\Omega)$. To show that $v$ is an element of $\mathcal{B}$ is sufficient to observe that $\mathcal{B}$ is convex and closed in $L^2(\Omega)$ and hence weakly closed by Mazur's lemma. Uniqueness of the minimiser follows by strict convexity of $\mathscr{F}^{\lambda_1,\lambda_2}(f,u,\cdot)$.
\end{proof}

\section{The Kullback-Leibler functional}    \label{append:KLfunct}

We recall some general definitions and results on the Kullback-Leibler functional which have been studied in \cite{eggermont1993,resmerita2007,borwein1991,Saw11} and used in this work for the analysis of the model \eqref{gausspoissinfconvfidelity}. 

\begin{definition}[Kullback-Leibler functional] Let $\Omega\subset\R^d,~ d>1$ be a regular domain and $\mu$ a measure on $\Omega$. The Kullback-Leibler (KL) functional is the function: $D_{KL}: L^1(\Omega)\times L^1(\Omega)\to \R^+\cup{+\infty}$ defined by:
\begin{equation}   \label{generaldefKL}
D_{KL}(\varphi,\psi)=\int_\Omega \left( \varphi\log\left( \frac{\varphi}{\psi} \right) - \varphi+\psi\right) d\mu\quad\text{ for every } \varphi,~\psi\geq 0~\text{ a. e. }
\end{equation}
\end{definition}

\begin{remark}    \label{positivityKL}
Here and throughout the paper, we have uses the convention $0\log 0=0$ by which we have that the integrand function in \eqref{generaldefKL} is nonnegative (by convexity of the function $f(x)=x\log x$) and vanishes if and only if $\varphi=\psi$.
\end{remark}

\smallskip

The following Proposition and Corollary collect some results from \cite[Section 3.4]{resmerita2007} and \cite[Lemma 4.6.3]{Saw11} on convexity and weak lower semicontinuity  properties of the functional $D_{KL}$ and are formulated to be adapted to our study. 

\begin{proposition}   \label{propos:KLproperties}
Let $D_{KL}$ be defined as in \eqref{generaldefKL}. The following properties hold:
\begin{enumerate}
\item The function $(\varphi,\psi)\mapsto D_{KL}(\varphi,\psi)$ is convex. 
\item For any fixed nonnegative $\varphi\in L^1(\Omega)$, the function $D_{KL}(\varphi,\cdot)$ is lower semicontinuous with respect to the weak topology of $L^1(\Omega)$.
\item For any fixed nonnegative and bounded $\psi\in L^1(\Omega)$,  the function $D_{KL}(\cdot,\psi)$ is lower semicontinuous with respect to the weak topology of $L^1(\Omega)$.
\item For any nonnegative functions $\varphi, \psi\in L^1(\Omega)$ the following estimate holds:
\begin{equation}   \label{KLestimate}
\| \varphi-\psi \|_{L^1(\Omega)}^2 \leq \left( \frac{2}{3} \| \varphi \|_{L^1(\Omega)} +  \frac{4}{3} \| \psi \|_{L^1(\Omega)} \right) D_{KL}(\varphi,\psi).
\end{equation}
\end{enumerate}
\end{proposition}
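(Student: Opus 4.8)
The plan is to treat the four properties in turn, exploiting the pointwise structure of the integrand $g(s,t):=s\log(s/t)-s+t$ on $\R^+\times\R^+$ (with the convention $0\log 0=0$) and the representation $D_{KL}(\varphi,\psi)=\int_\Omega g(\varphi,\psi)\,d\mu$. Properties \textit{1.}--\textit{3.} are functional-analytic and follow from convexity together with Fatou's lemma, while \textit{4.} is an instance of the generalised Csisz\'ar--Kullback--Pinsker inequality and is the genuinely analytic part.

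For \textit{1.} I would first establish joint convexity of $g$ on $\R^+\times\R^+$. Writing $g(s,t)=s\log(s/t)-s+t$, the only nontrivial term is $(s,t)\mapsto s\log(s/t)$, which is precisely the perspective function $t\,h(s/t)$ of the convex map $h(r):=r\log r$; since the perspective of a convex function is jointly convex, and $-s+t$ is linear, $g$ is jointly convex. Joint convexity of $D_{KL}$ then follows by integrating the pointwise convexity inequality and using linearity of the integral. In particular $D_{KL}(\varphi,\cdot)$ and $D_{KL}(\cdot,\psi)$ are separately convex.

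For \textit{2.} and \textit{3.} I would invoke the standard principle that a convex functional on $L^1(\Omega)$ which is lower semicontinuous for the strong topology is automatically lower semicontinuous for the weak topology, since its epigraph is convex and strongly closed, hence weakly closed by Mazur's lemma. By \textit{1.} each partial functional is convex, so it remains to prove strong lower semicontinuity: given a strongly convergent sequence in the relevant argument, I would pass to an almost-everywhere convergent subsequence and apply Fatou's lemma to the nonnegative integrand $g(\varphi,\psi_n)\ge 0$ (resp. $g(\varphi_n,\psi)\ge 0$), using continuity of $g$ in the corresponding variable to identify the pointwise limit. The boundedness of the fixed $\psi$ in \textit{3.} is what keeps the integrand controlled so this argument applies.

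The main obstacle is \textit{4.} Here I would argue by Cauchy--Schwarz with the weight $w:=\tfrac{2}{3}\varphi+\tfrac{4}{3}\psi\ge 0$, which satisfies $\int_\Omega w\,d\mu=\tfrac{2}{3}\|\varphi\|_{L^1(\Omega)}+\tfrac{4}{3}\|\psi\|_{L^1(\Omega)}$, obtaining
\begin{equation*}
\|\varphi-\psi\|_{L^1(\Omega)}^2=\left(\int_\Omega \frac{|\varphi-\psi|}{\sqrt{w}}\,\sqrt{w}\,d\mu\right)^2\le \left(\int_\Omega \frac{(\varphi-\psi)^2}{w}\,d\mu\right)\left(\int_\Omega w\,d\mu\right).
\end{equation*}
It then suffices to prove the pointwise bound $(\varphi-\psi)^2/w\le g(\varphi,\psi)$, which after setting $t=\varphi/\psi$ and dividing through by $\psi$ reduces to the scalar inequality $\tfrac{3(t-1)^2}{2(t+2)}\le t\log t-t+1$ for all $t>0$. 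This one-variable inequality, with equality only at $t=1$, is the heart of the matter; I would verify it by a calculus argument, checking that the difference and its first derivative vanish at $t=1$ and that a sign analysis of the derivative forces nonnegativity throughout. Combining the pointwise bound with the Cauchy--Schwarz estimate yields \eqref{KLestimate}.
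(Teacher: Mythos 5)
Your proposal is correct, but there is nothing in the paper to compare it against: the paper states Proposition \ref{propos:KLproperties} without proof, presenting it as a collection of known results quoted from \cite[Section 3.4]{resmerita2007} and \cite[Lemma 4.6.3]{Saw11}. Your argument is a sound self-contained reconstruction of those results. Joint convexity via the perspective-function representation $s\log(s/t)=t\,h(s/t)$ with $h(r)=r\log r$ is the standard route; the reduction of weak to strong lower semicontinuity through convexity and Mazur's lemma, followed by Fatou on the nonnegative, separately lower semicontinuous integrand, is exactly how these statements are usually proved; and your weighted Cauchy--Schwarz reduction of \eqref{KLestimate} to the scalar inequality $\tfrac{3(t-1)^2}{2(t+2)}\le t\log t-t+1$ is the classical Borwein--Lewis proof of this Csisz\'ar--Kullback--Pinsker-type bound with precisely these constants. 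For completeness you could note that the scalar inequality follows cleanly because the difference $F(t)=t\log t-t+1-\tfrac{3(t-1)^2}{2(t+2)}$ satisfies $F(1)=F'(1)=0$ and $F''(t)=\tfrac{1}{t}-\tfrac{27}{(t+2)^3}\ge 0$ by AM--GM applied to $t+1+1\ge 3t^{1/3}$, so $F$ is convex with minimum $0$ at $t=1$. One minor remark: in your treatment of item \textit{3.} the boundedness of $\psi$ is not actually what makes your Fatou argument work (it needs nothing beyond nonnegativity and lower semicontinuity of the integrand); that hypothesis matters in the cited references for other formulations of weak lower semicontinuity, so your aside is harmless but slightly misattributes its role.
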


\begin{corollary}   \label{corollaryKL}
Let $\left\{ \varphi_n \right\}$ and  $\left\{ \psi_n \right\}$ are bounded sequences in $L^1(\Omega)$. Then, by \eqref{KLestimate}
\begin{equation}
\lim_{n\to \infty} D_{KL} (\varphi_n,\psi_n) = 0 \quad\Rightarrow\quad \lim_{n\to\infty} \| \varphi_n-\psi_n \|_{L^1(\Omega)} = 0.
\end{equation}
\end{corollary}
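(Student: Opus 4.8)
The plan is to derive this as an immediate consequence of the quantitative estimate \eqref{KLestimate}, which is already available from Proposition \ref{propos:KLproperties}. The whole content of the corollary is that a vanishing KL divergence forces vanishing $L^1$ distance, and \eqref{KLestimate} provides exactly the one-sided control needed once the prefactor is shown to stay bounded along the sequences.

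First I would apply \eqref{KLestimate} pointwise in $n$ to the pair $(\varphi_n,\psi_n)$, obtaining
\begin{equation*}
\| \varphi_n-\psi_n \|_{L^1(\Omega)}^2 \leq \left( \frac{2}{3} \| \varphi_n \|_{L^1(\Omega)} +  \frac{4}{3} \| \psi_n \|_{L^1(\Omega)} \right) D_{KL}(\varphi_n,\psi_n).
\end{equation*}
Next I would invoke the boundedness hypothesis: since $\{\varphi_n\}$ and $\{\psi_n\}$ are bounded in $L^1(\Omega)$, there is a finite constant $C>0$ with $\tfrac{2}{3}\|\varphi_n\|_{L^1(\Omega)} + \tfrac{4}{3}\|\psi_n\|_{L^1(\Omega)} \leq C$ for every $n$, so that $\| \varphi_n-\psi_n \|_{L^1(\Omega)}^2 \leq C\, D_{KL}(\varphi_n,\psi_n)$. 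Finally, letting $n\to\infty$ and using the assumption $D_{KL}(\varphi_n,\psi_n)\to 0$, the right-hand side tends to zero, whence $\| \varphi_n-\psi_n \|_{L^1(\Omega)}^2\to 0$ and therefore $\| \varphi_n-\psi_n \|_{L^1(\Omega)}\to 0$, which is the claim.

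There is no genuine obstacle here: the only thing to verify carefully is that the coefficient multiplying $D_{KL}$ remains uniformly bounded, which is precisely what the $L^1$-boundedness of the two sequences guarantees. In particular, nonnegativity of $\varphi_n,\psi_n$ (implicit in the definition of $D_{KL}$ and in the admissible sets) ensures \eqref{KLestimate} is applicable throughout, so the argument is a direct squeeze and requires no additional machinery.
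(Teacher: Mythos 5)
Your argument is correct and is exactly the one the paper intends: the corollary is stated as an immediate consequence of the estimate \eqref{KLestimate}, and your application of it with the uniform bound on the prefactor coming from the $L^1$-boundedness of the two sequences is precisely that deduction. Nothing is missing.
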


\end{appendices}

\bibliographystyle{plain} 
\bibliography{infconvbib}

\begin{thebibliography}{10}

\bibitem{AmbrosioBV}
L.~Ambrosio, N.~Fusco, and D.~Pallara.
\newblock {\em {Functions of bounded variation and free discontinuity
  problems}}.
\newblock Oxford University Press, USA, 2000.

\bibitem{bauschkecombettes}
H.~H. Bauschke and P.~L. Combettes.
\newblock {\em Convex analysis and monotone operator theory in {H}ilbert
  spaces}.
\newblock CMS Books in Mathematics/Ouvrages de Math{\'e}matiques de la SMC.
  Springer, New York, 2011.

\bibitem{benningFidelities}
M.~Benning and M.~Burger.
\newblock Error estimates for general fidelities.
\newblock {\em Electronic Transactions on Numerical Analysis}, 38:44--68, 2011.

\bibitem{Benvenuto2008}
F.~Benvenuto, A.~La~Camera, C.~Theys, A.~Ferrari, H.~Lant{{\'e}}ri, and
  M.~Bertero.
\newblock The study of an iterative method for the reconstruction of images
  corrupted by {P}oisson and {G}aussian noise.
\newblock {\em Inverse Problems}, 24(3):035016, 20, 2008.

\bibitem{borwein1991}
J.~M. Borwein and A.~S. Lewis.
\newblock Convergence of best entropy estimates.
\newblock {\em SIAM Journal on Optimization}, 1(2):191--205, 1991.

\bibitem{buttazzorelaxation3}
G.~Bouchitt{\'e}, A.~Braides, and G.~Buttazzo.
\newblock Relaxation results for some free discontinuity problems.
\newblock {\em Journal f\"ur die Reine und Angewandte Mathematik}, 458:1--18,
  1995.

\bibitem{buttazzorelaxation1}
G.~Bouchitt{\'e} and G.~Buttazzo.
\newblock New lower semicontinuity results for nonconvex functionals defined on
  measures.
\newblock {\em Nonlinear Analysis. Theory, Methods \& Applications. Series A},
  15(7):679--692, 1990.

\bibitem{buttazzorelaxation2}
G.~Bouchitt{\'e} and G.~Buttazzo.
\newblock Relaxation for a class of nonconvex functionals defined on measures.
\newblock {\em Annales de l'Institut Henri Poincar\'e. Analyse Non Lin\'eaire},
  10(3):345--361, 1993.

\bibitem{TGV}
K.~Bredies, K.~Kunisch, and T.~Pock.
\newblock Total generalized variation.
\newblock {\em SIAM Journal on Imaging Sciences}, 3(3):492--526, 2010.

\bibitem{Bru10}
C.~Brune.
\newblock {\em 4D Imaging in Tomography and Optical Nanoscopy}.
\newblock PhD thesis, University of Muenster, Germany, july 2010.
\newblock Reviewer: Prof. Dr. Martin Burger, Prof. Dr. Stanley Osher.

\bibitem{vaggelisPET}
M.~Burger, J.~M{\"u}ller, E.~Papoutsellis, and C.-B. Sch{\"o}nlieb.
\newblock Total variation regularisation in measurement and image space for
  {PET} reconstruction.
\newblock {\em Inverse Problems}, 30(10), 2014.

\bibitem{VaggelisIC2015}
M.~Burger, K.~Papafitsoros, E.~Papoutsellis, and C.-B. Sch{{\"o}}nlieb.
\newblock Infimal convolution regularisation functionals of {BV} and {$L^p$}
  spaces. {P}art {I}: The finite {$p$} case.
\newblock {\em Journal of Mathematical Imaging and Vision}, 55, 2016.

\bibitem{impulsegauss2008}
J.-F. Cai, R.~H. Chan, and M.~Nikolova.
\newblock Two-phase approach for deblurring images corrupted by impulse plus
  {G}aussian noise.
\newblock {\em Inverse Problems and Imaging}, 2(2):187--204, 2008.

\bibitem{bilevellearning}
L.~Calatroni, C.~Chung, J.~C. De~Los~Reyes, C.-B. Sch\"onlieb, and T.~Valkonen.
\newblock Bilevel approaches for learning of variational imaging models.
\newblock In {\em RADON book Series on Computational and Applied Mathematics,
  vol. 18}. Berlin, Boston: De Gruyter, to appear, 2016.

\bibitem{lucasampling}
L.~Calatroni, J.~C. De~Los~Reyes, and C.-B. Sch\"onlieb.
\newblock Dynamic sampling schemes for optimal noise learning under multiple
  nonsmooth constraints.
\newblock In Christian P\"oltzsche, Clemens Heuberger, Barbara Kaltenbacher,
  and Franz Rendl, editors, {\em System Modeling and Optimization}, volume 443
  of {\em IFIP Advances in Information and Communication Technology}, pages
  85--95. Springer Berlin Heidelberg, 2014.

\bibitem{chambolle2004algorithm}
A.~Chambolle.
\newblock An algorithm for total variation minimization and applications.
\newblock {\em Journal of Mathematical Imaging and Vision}, 20(1):89--97, 2004.

\bibitem{chambollelions1997}
A.~Chambolle and P.-L. Lions.
\newblock Image recovery via total variation minimization and related problems.
\newblock {\em Numerische Mathematik}, 76(2):167--188, 1997.

\bibitem{chambolle2016introduction}
Antonin Chambolle and Thomas Pock.
\newblock An introduction to continuous optimization for imaging.
\newblock {\em Acta Numerica}, 25:161--319, 2016.

\bibitem{chanshenbook}
T.~F. Chan and J.~Shen.
\newblock {\em Image Processing and Image Analysis, Variational, PDE, Wavelet
  and Stochastic Methods}.
\newblock SIAM, 2005.

\bibitem{dalmasoGamma}
G.~Dal~Maso.
\newblock {\em An introduction to {$\Gamma$}-convergence}.
\newblock Progress in Nonlinear Differential Equations and their Applications,
  8. Birkh\"auser Boston, Inc., Boston, MA, 1993.

\bibitem{juancarlos2012}
J.~C. De~los Reyes.
\newblock Optimization of mixed variational inequalities arising in flow of
  viscoplastic materials.
\newblock {\em Computational Optimization and Applications}, 52(3):757--784,
  2012.

\bibitem{noiselearning}
J.~C. De~los Reyes and C.-B. Sch{{\"o}}nlieb.
\newblock Image denoising: learning the noise model via nonsmooth
  {PDE}-constrained optimization.
\newblock {\em Inverse Problems and Imaging}, 7(4):1183--1214, 2013.

\bibitem{interiorpaper2015}
J.~C. De~los Reyes, C.-B. Sch{{\"o}}nlieb, and T.~Valkonen.
\newblock The structure of optimal parameters for image restoration problems.
\newblock {\em Journal of Mathematical Analysis and Applications},
  434(2016):464--500, 2016.

\bibitem{demengeltemam}
F.~Demengel and R.~Temam.
\newblock Convex functions of a measure and applications.
\newblock {\em Indiana University Mathematics Journal}, 33(5):673--709, 1984.

\bibitem{duvaltvL1}
V.~Duval, J.-F. Aujol, and Y.~Gousseau.
\newblock The {TV-$L^1$} model: a geometric point of view.
\newblock {\em Multiscale Modeling \& Simulation}, 8(1):154--189, 2009.

\bibitem{eggermont1993}
P.~P.~B. Eggermont.
\newblock Maximum entropy regularization for {F}redholm integral equations of
  the first kind.
\newblock {\em SIAM Journal on Mathematical Analysis}, 24(6):1557--1576, 1993.

\bibitem{Foigausspoiss}
A.~Foi.
\newblock Clipped noisy images: Heteroskedastic modeling and practical
  denoising.
\newblock {\em Signal Processing}, 89(12):2609 -- 2629, 2009.
\newblock Special Section: Visual Information Analysis for Security.

\bibitem{bovikBook2000}
J.~D. Gibson and A.~Bovik, editors.
\newblock {\em Handbook of Image and Video Processing}.
\newblock Academic Press, Inc., Orlando, FL, USA, 1st edition, 2000.

\bibitem{HafsaInfInt2003}
O.-A. Hafsa and J.-P. Mandallena.
\newblock Interchange of infimum and integral.
\newblock {\em Calculus of Variations and Partial Differential Equations},
  18(4):433--449, 2003.

\bibitem{HintermuellerLanger2013}
M.~Hinterm{{\"u}}ller and A.~Langer.
\newblock Subspace correction methods for a class of nonsmooth and nonadditive
  convex variational problems with mixed {$L^1/L^2$} data-fidelity in image
  processing.
\newblock {\em SIAM Journal on Imaging Sciences}, 6(4):2134--2173, 2013.

\bibitem{hiriarturruty}
J.-B. Hiriart-Urruty and C.~Lemar{{\'e}}chal.
\newblock {\em Fundamentals of convex analysis}.
\newblock Grundlehren Text Editions. Springer-Verlag, Berlin, 2001.

\bibitem{HollerKunischIC2014}
M.~Holler and K.~Kunisch.
\newblock On infimal convolution of {TV}-type functionals and applications to
  video and image reconstruction.
\newblock {\em SIAM Journal on Imaging Sciences}, 7(4):2258--2300, 2014.

\bibitem{idier2013bayesian}
J.~Idier.
\newblock {\em Bayesian approach to inverse problems}.
\newblock John Wiley \& Sons, 2013.

\bibitem{AnnaThesis2013}
A.~Jezierska.
\newblock {\em Image Restoration in the presence of {P}oisson-{G}aussian
  noise}.
\newblock PhD thesis, Universit\'e Paris est, 2013.

\bibitem{Jezierska2012}
A.~Jezierska, E.~Chouzenoux, J.-C. Pesquet, and H.~Talbot.
\newblock A primal-dual proximal splitting approach for restoring data
  corrupted with {P}oisson-{G}aussian noise.
\newblock In {\em IEEE International Conference on Acoustics, Speech and Signal
  Processing}, Kyoto, 2012.

\bibitem{poissongauss2013}
A.~Jezierska, E.~Chouzenoux, J.-C. Pesquet, and H.~Talbot.
\newblock {A Convex Approach for Image Restoration with Exact Poisson-Gaussian
  Likelihood}.
\newblock {\em SIAM Journal on Imaging Science}, 62(1):17--30, 2015.

\bibitem{kunisch2013bilevel}
K.~Kunisch and T.~Pock.
\newblock A bilevel optimization approach for parameter learning in variational
  models.
\newblock {\em SIAM Journal on Imaging Science}, 6(2):938--983, 2013.

\bibitem{langerl1l2}
Andreas Langer.
\newblock Automated parameter selection in the {$L^1$}-{$L^2$}-{TV} model for
  removing gaussian plus impulse noise.
\newblock {\em to appear in Inverse Problems}, 2016.

\bibitem{lanza2013}
A.~Lanza, S.~Morigi, F.~Sgallari, and Y.-W. Wen.
\newblock Image restoration with poisson{–}gaussian mixed noise.
\newblock {\em Computer Methods in Biomechanics and Biomedical Engineering:
  Imaging \& Visualization}, 2:12--24, 2014.

\bibitem{lechartrandTVpoisson}
T.~Le, R.~Chartrand, and T.~J. Asaki.
\newblock A variational approach to reconstructing images corrupted by
  {P}oisson noise.
\newblock {\em Journal of Mathematical Imaging and Vision}, 27(3):257--263,
  2007.

\bibitem{Luisier2010a}
F.~Luisier, T.~Blu, and M.~Unser.
\newblock Image denoising in mixed {P}oisson-{G}aussian noise.
\newblock {\em IEEE Transactions on Image Processing}, 20:696--708, 2011.

\bibitem{laplacenoiseMarks}
R.~J. Marks, G.~L. Wise, D.~G. Haldeman, and J.~L. Whited.
\newblock Detection in {L}aplace noise.
\newblock {\em IEEE Transactions on Aerospace and Electronic Systems},
  14(6):866--872, 1978.

\bibitem{Meyer2001}
Y.~Meyer.
\newblock {\em Oscillating Patterns in Image Processing and Nonlinear Evolution
  Equations: The Fifteenth Dean Jacqueline B. Lewis Memorial Lectures}.
\newblock American Mathematical Society, Boston, MA, USA, 2001.

\bibitem{nikolovaoutremov}
M.~Nikolova.
\newblock A variational approach to remove outliers and impulse noise.
\newblock {\em Journal of Mathematical Imaging and Vision}, 20(1-2):99--120,
  2004.

\bibitem{OsherBurger2005}
S.~Osher, M.~Burger, D.~Goldfarb, J.~Xu, and W.~Yin.
\newblock An iterative regularization method for total variation-based image
  restoration.
\newblock {\em Multiscale Model. Simul.}, 4(2):460--489 (electronic), 2005.

\bibitem{kostasTV2}
K.~Papafitsoros and C.-B. Sch\"onlieb.
\newblock A combined first and second order variational approach for image
  reconstruction.
\newblock {\em Journal of Mathematical Imaging and Vision}, 48(2):308--338,
  2014.

\bibitem{resmerita2007}
E.~Resmerita and R.~S. Anderssen.
\newblock Joint additive {K}ullback-{L}eibler residual minimization and
  regularization for linear inverse problems.
\newblock {\em Mathematical Methods in the Applied Sciences},
  30(13):1527--1544, 2007.

\bibitem{rockafell76}
R.~T. Rockafellar.
\newblock Integral functionals, normal integrands and measurable selections.
\newblock In {\em Nonlinear operators and the calculus of variations ({S}ummer
  {S}chool, {U}niv. {L}ibre {B}ruxelles, {B}russels, 1975)}, pages 157--207.
  Lecture Notes in Math., Vol. 543. Springer, Berlin, 1976.

\bibitem{rodriguezReview2013}
P.~Rodriguez.
\newblock Total variation regularization algorithms for images corrupted with
  different noise models: A review.
\newblock {\em Journal of Electrical and Computer Engineering}, 2013, 2014.

\bibitem{rudinosherfatemi}
L.~Rudin, S.~Osher, and E.~Fatemi.
\newblock Nonlinear {T}otal {V}ariation based noise removal algorithms.
\newblock {\em Physica D}, 60:259--268, 1992.

\bibitem{sarder2006}
P.~Sarder and A.~Nehorai.
\newblock Deconvolution methods for {3}-{D} fluorescence microscopy images.
\newblock {\em IEEE Signal Processing Magazine}, May:32--45, 2006.

\bibitem{Saw11}
A.~Sawatzky.
\newblock {\em (Nonlocal) Total Variation in Medical Imaging}.
\newblock PhD thesis, University of Muenster, Germany, 2011.

\bibitem{alexTVpoisson}
A.~Sawatzky, C.~Brune, J.~M\"uller, and M.~Burger.
\newblock Total variation processing of images with {P}oisson statistics.
\newblock In Xiaoyi Jiang and Nicolai Petkov, editors, {\em Computer Analysis
  of Images and Patterns}, volume 5702 of {\em Lecture Notes in Computer
  Science}, pages 533--540. Springer Berlin Heidelberg, 2009.

\bibitem{snyder93}
D.~L. Snyder, A.~M. Hammoud, and R.~L. White.
\newblock Image recovery from data acquired with a charge-coupled device
  camera.
\newblock {\em Journal of Optical Society of America}, A(10):1014--1023, 1993.

\bibitem{starckmurtagh1994}
J.-L. Starck, F.~D. Murtagh, and A.~Bijaoui.
\newblock Image restoration with noise suppression using a wavelet transform
  and a multiresolution support constraint.
\newblock In {\em SPIE proceedings}, volume 2302, pages 132--143, 1994.

\bibitem{stuartInversebook}
A.~M. Stuart.
\newblock Inverse problems: a {B}ayesian perspective.
\newblock {\em Acta Numerica}, 19:451--559, 2010.

\bibitem{diff_tens}
T.~Valkonen, K.~Bredies, and F.~Knoll.
\newblock Total generalized variation in diffusion tensor imaging.
\newblock {\em SIAM Journal on Imaging Sciences}, 6(1):487--525, 2013.

\bibitem{vese2001study}
L.~Vese.
\newblock A study in the {BV} space of a denoising--deblurring variational
  problem.
\newblock {\em Applied Mathematics \& Optimization}, 44(2):131--161, 2001.

\bibitem{laplacenoiseWu}
M.~Wu and W.~J. Fitzegerald.
\newblock Analytical approach to changepoint detection in {L}aplacian noise.
\newblock In {\em IEEE Proceedings - Vision, Image and Signal Processing},
  volume 142, pages 174--180, 1995.

\end{thebibliography}

\end{document}